\newcommand{\subsectionruninhead}{\@startsection{subsection}{2}{0mm}
{-\baselineskip}{-0mm}{\bf\large}}
\newcommand{\subsubsectionruninhead}{\@startsection{subsubsection}{3}{4mm}
{\baselineskip}{-4mm}{\it\normalsize}}
\def\AA{{\mathbb A}}
 \def\NN{{\mathbb N}}  
 \def\RR{{\mathbb R}}  \def\TT{{\mathbb T}}
 \def\ZZ{{\mathbb Z}}
  \def\cG{{\cal G}} \def\cM{{\cal M}} 
    \def\cT{{\cal T}}
\def\cC{{\cal C}}   \def\cO{{\cal O}} \def\cU{{\cal U}}
\def\diff{\operatorname{Diff}}
\def\dd{\operatorname{d}}
\def\B{\operatorname{B}}
\def\id{\operatorname{Id}}
\def\D{\operatorname{D}}
\newtheorem{theo}{Theorem}[section]
\newtheorem{coro}[theo]{Corollary}
\newtheorem{prop}[theo]{Proposition}
\newtheorem{claim}[theo]{Claim}
\newtheorem{rema}[theo]{Remark}
\title{Perturbation of $C^1$-diffeomorphisms and generic conservative dynamics on surfaces}
\author{Sylvain Crovisier}
\date{April 5, 2006}
\begin{document}
\selectlanguage{english}
\maketitle


\setcounter{section}{-1}
\section{Introduction}

When one studies a mechanical system with no dissipation,
the motion is governed by some ordinary differential equations which preserve
a volume form.
As an example, the forced damped pendulum with no friction gives rise to a
conservative dynamics on the open annulus (see~\cite{hubbard}):
let $\theta$ be the angle between the axis of a rigid
pendulum with the vertical and $\dot \theta$ be the angle velocity. The configurations $(\theta,\dot\theta)$ live on the infinite annulus $\AA=\RR/\ZZ \times \RR$ and the motion is governed
by the equation
$$\frac{d^2}{dt^2}\theta=-\sin(2\pi\theta)+h(t),$$
where $h$ is the forcing. The volume form $d\theta\wedge d\dot\theta$ is preserved.
By integrating the system, one obtains a global flow $(\phi_t)_{t\in \RR}$, that is a familly
of diffeomorphisms of $\AA$ which associates
to any initial configuration $(\theta,\dot\theta)$ at time $0$ the 
configuration $\phi_t(\theta,\dot\theta)$ at time $t$.
If the forcing $h$ is $T$-periodic, the flow satisfies the additional relation $\phi_{t+T}=\phi_t\circ \phi_T$
and the dynamics of the pendulum can be studied through the iterates of the conservative
annulus diffeomorphism $f=\phi_T$.
Several simple questions may be asked about this dynamics:
\begin{itemize}
\item \textit{What are the regions $U\subset \AA$ that are \emph{invariant} by $f$?}
Invariant means that $f(U)=U$.
\item \textit{Does there exist a dense set of initial data $(\theta,\dot\theta)$ which are
periodic?}
Periodic means that $f^\tau(\theta,\dot\theta)=(\theta,\dot\theta)$ for some integer $\tau\geq 1$.
\item \textit{How do the orbit separate?}
More precizely, let us consider two initial data $(\theta,\dot\theta)$ and $(\theta',\dot\theta')$ that are close.
How does the distance $d(f^n(\theta,\dot\theta), f^n(\theta,\dot\theta))$ behaves when $n$ increases?
\end{itemize}
An other example of conservative diffeomorphism is the standard map on the two-torus:
$$(x,y)\mapsto (x+y , y+ a\sin(2\pi (x+y))) \mod \ZZ^2.$$
It is sometimes considered by physicists as a model for chaotic dynamics:
the equations defining such a diffeomorphism are simple but 
we are far from beeing able to give a complete description of its dynamics.
However one can
hope that {\em some other systems, arbitrarily close to the original one, could be much easier to be described.}
To reach this goal, one has to precise what ``arbitrarily close" and ``be described" mean: the answer to our
problem will depend a lot on these two definitions. The viewpoint we adopt in this text allows to give a rather
deep description of the dynamics. However one should not forget that one can choose other definitions
which could seem also (more?) relevant and that very few results were obtained in this case.

\subsubsectionruninhead*{The setting, Baire genericity.}
In the following we consider a compact and boundaryless smooth connected surface
$M$ endowed with a smooth volume $v$ (which is, after normalization, a probability
measure) and we fix a diffeomorphism $f$ on $M$
which preserves $v$. We are aimed to describe the space of orbits of $f$ and in particular the
space of periodic orbits.

Lot of difficulties appear if one chooses an arbitrary diffeomorphism. Our philosophy
here will be to forget the dynamics which seem too pathological
hoping that the set of diffeomorphisms that we describe is large (at least dense in the
space of dynamical systems we are working with).
For us, such a set will be large if it is generic in the sense of Baire category.

This notion requires to choose carefully the space of diffeomorphisms, that should be
a Baire space: for example for any $k\in \NN$, the space $\diff_v^k$ of $C^k$-diffeomorphisms
of $M$ which preserve $v$. A set of diffeomorphisms is {\em generic} (or {\em residual}) if it contains
a dense G$_\delta$ subset of $\diff_v^k$, i.e. by Baire theorem if it contains a countable
intersection of dense open sets of $\diff_v^k$ (so that the intersection of two generic sets
remains generic).\\
In the sequel, we are interested in exhibiting
generic properties of diffeomorphisms: these are properties that are satisfied on a generic set
of diffeomorphisms. 

\subsubsectionruninhead*{An example: generic behavior of periodic orbits.}
Robinson has proven in~\cite{robinson1,robinson2} the following  generic property
which extends a previous result of Kupka and Smale to the conservative diffeomorphisms.
It is a consequence of Thom's transversality theorem.
\begin{theo}[Robinson]\label{t.periodic-generic}
When $k\geq 1$, for any generic diffeomorphism $f\in\diff_v^k$, and any periodic orbit
$p,f(p),\dots,f^\tau(p)=p$, one of the two following cases occurs (figure~\ref{f.periodic}):
\begin{itemize}
\item either the orbit of $p$ is {\em elliptic}: the eigenvalues of $\D_p f^\tau$ are non-real (in particular, this tangent map is conjugate to a rotation);
\item or $p$ is a {\em hyperbolic saddle}: the eigenvalues are real and have modulus different
from $1$. In this case, there are some one-dimensional invariant manifolds
(one stable $W^s(p)$ and one unstable $W^u(p)$) through $p$.
Points on the stable manifold converge towards the orbit of $p$ in the future, and the same for
points on the unstable manifold in the past.
\end{itemize}
\end{theo}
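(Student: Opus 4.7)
The plan is to write the desired set of diffeomorphisms as a countable intersection of open dense subsets. For each integer $\tau\geq 1$, let $G_\tau\subset\diff_v^k$ denote the set of $f$ such that every fixed point of $f^\tau$ is either elliptic or a hyperbolic saddle; by Baire it suffices to prove that $G_\tau$ is open and dense in $\diff_v^k$ for each $\tau$.

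\emph{Openness.} If $p$ is elliptic or a hyperbolic saddle of $f^\tau$, then $1$ is not an eigenvalue of $\D_pf^\tau$, so $\det(\D_pf^\tau-\id)\neq 0$ and $p$ persists as an isolated fixed point of $f^\tau$ under $C^1$-small perturbations. Compactness of $M$ yields only finitely many such orbits and prevents new ones from appearing. The type (elliptic versus hyperbolic saddle) is determined by the sign of $\operatorname{tr}(\D_pf^\tau)^2-4$, which is nonzero and varies continuously, hence is preserved.

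\emph{Density via jet transversality.} Since $f$ preserves $v$ and $M$ is two-dimensional, $\D_pf^\tau$ lies in $SL(T_pM)$ and has characteristic polynomial $X^2-\operatorname{tr}(\D_pf^\tau)X+1$. The excluded case (real eigenvalues of modulus $1$) occurs exactly when $\operatorname{tr}(\D_pf^\tau)=\pm 2$. In the $7$-dimensional jet manifold $J^1_v(M,M)$ of conservative $1$-jets, consider the subset
\[ W=\{(x,x,L)\in J^1_v(M,M)\ :\ L\in SL(T_xM),\ \operatorname{tr}(L)=\pm 2\}, \]
of dimension $2+2=4$: two dimensions for the diagonal $x=y$ and two dimensions for $L$ (the $3$-dimensional $SL(T_xM)$ cut by a single scalar equation). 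The map $p\mapsto(p,f^\tau(p),\D_pf^\tau)$ from the $2$-manifold $M$ to $J^1_v(M,M)$ avoids the codimension-$3$ set $W$ as soon as it is transverse to it, since the expected intersection dimension $2-3<0$ forces emptiness.

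\emph{The main obstacle.} Thom's transversality theorem cannot be applied to $f^\tau$ naively, because $f^\tau$ depends in a constrained way on the single diffeomorphism $f$. I would handle this by a local perturbation argument at each periodic orbit: the points $p,f(p),\dots,f^{\tau-1}(p)$ are distinct, so one can choose a neighborhood of $p$ disjoint from its other iterates and modify $f$ only there. The $1$-jet of $f^{\tau-1}$ at $f(p)$ then remains unchanged, and the $1$-jet of $f^\tau$ at $p$ changes by precomposition with the perturbed jet of $f$ at $p$. Using compactly supported volume-preserving perturbations, constructed for example from Hamiltonian vector fields in area-preserving charts in the spirit of Moser, one can realize arbitrary $C^1$-small variations of $\D_pf^\tau$ within $SL(T_pM)$. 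Combined with the codimension-$3$ count above and a standard application of Thom's theorem to the perturbed map, this freedom pushes the image of $J^1f^\tau$ off $W$ near each previously bad periodic orbit and yields the density of $G_\tau$.
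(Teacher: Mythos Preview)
The paper does not give its own proof of this theorem; it is quoted as a result of Robinson and attributed in one sentence to Thom's transversality theorem. Your outline --- set up the bad locus $W$ as a codimension-$3$ subset of the conservative $1$-jet bundle and argue that the jet of $f^\tau$ can be made transverse to it via local volume-preserving perturbations along the orbit --- is exactly the standard route and matches what the paper invokes.

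One point needs care. Your claim that $p,f(p),\dots,f^{\tau-1}(p)$ are distinct holds only when $\tau$ is the \emph{minimal} period of $p$. A fixed point of $f^\tau$ may have minimal period $\tau'$ strictly dividing $\tau$; then the orbit has only $\tau'$ points, and being elliptic for $f^{\tau'}$ does not force the same for $f^\tau$ (if the eigenvalues are $e^{\pm i\theta}$ with $\theta$ rational, $\D_pf^\tau$ can land on $\pm\id$). The usual remedy is to proceed by induction on $\tau$ and to strengthen the generic condition so that elliptic points of minimal period $\tau'$ have eigenvalues that are not roots of unity --- a countable family of further codimension-$1$ conditions, each handled by the same transversality argument. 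With that in place your local-perturbation step is performed in a neighborhood of $p$ disjoint from $f(p),\dots,f^{\tau'-1}(p)$, and the rest of your argument goes through.
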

\begin{figure}[ht]
\begin{center}
\input{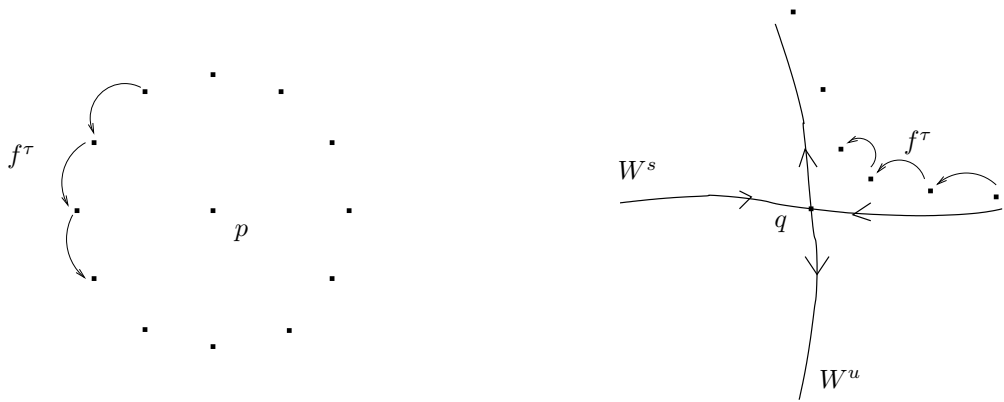}
\end{center}
\caption{Dynamics near an elliptic point, $p$, and a saddle point, $q$.\label{f.periodic}}
\end{figure}
One wants to say that the dynamics of the return map $f^\tau$ near a $\tau$-periodic $p$ ``looks like"
the dynamics of the tangent map $\D_p f^\tau$. This is the case if $p$ is hyperbolic:
D. Grobman and P. Hartman have shown that if $p$ is hyperbolic, the map $f^\tau$ is topologicall conjugate
to $\D f^\tau$ near $p$; more precizely, there exist a neighborhood $U$ of $p$
and $V$ of $0$ in the tangent space $T_pM$ and a homeomorphism $h\colon U\to V$ such that $h\circ f^\tau=\D_pf^\tau\circ h$.

Theorem~\ref{t.periodic-generic} implies in particular:
\begin{coro}\label{c.periodic-finite}
When $k\geq 1$, for any generic diffeomorphism $f\in\diff_v^k$, and any $\tau\in \NN\setminus\{0\}$,
the set of periodic points of period $\tau$ is finite.\\
\end{coro}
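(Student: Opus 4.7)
\begin{demo}[Proof plan]
The plan is to deduce isolation of each period-$\tau$ point from Theorem~\ref{t.periodic-generic}, and then to pass from isolation to finiteness using compactness of~$M$. Fix a generic $f\in\diff_v^k$ for which the conclusion of Theorem~\ref{t.periodic-generic} holds, and fix $\tau\geq 1$. The set of periodic points of period $\tau$ is the fixed point set $\mathrm{Fix}(f^\tau)=\{p\in M:f^\tau(p)=p\}$, which is closed in $M$.

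First I would show that for every $p\in\mathrm{Fix}(f^\tau)$, the number $1$ is not an eigenvalue of $\D_p f^\tau$. There are two cases, according to Theorem~\ref{t.periodic-generic}. If $p$ is a hyperbolic saddle, the eigenvalues are real with moduli different from $1$, so $1$ is not among them. If $p$ is elliptic, the eigenvalues are non-real; since $f^\tau$ preserves the volume $v$, we have $\det \D_p f^\tau=\pm 1$, and a non-real pair of eigenvalues must be complex conjugates $\lambda,\bar\lambda$ with $|\lambda|^2=\pm 1$, hence $|\lambda|=1$, and neither equals $1$ because they are non-real. In both situations $\D_p f^\tau-\id$ is an invertible linear map of $T_pM$.

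Now I would apply the inverse function theorem (or, equivalently, the implicit function theorem to the local expression of $f^\tau-\id$ in a chart) to conclude that $p$ is an isolated solution of the equation $f^\tau(x)=x$. Thus every point of $\mathrm{Fix}(f^\tau)$ is isolated in $\mathrm{Fix}(f^\tau)$, i.e.\ $\mathrm{Fix}(f^\tau)$ is a discrete closed subset of the compact manifold $M$, and is therefore finite.

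I do not expect a real obstacle: the only point requiring a small verification is the remark that area preservation combined with non-real eigenvalues forces them to lie on the unit circle but away from~$1$, which is exactly what rules out the degenerate case in the elliptic alternative.
\end{demo}
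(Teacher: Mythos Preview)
Your argument is correct and is exactly the standard deduction the paper has in mind (the paper gives no proof, merely stating the corollary as an immediate consequence of Theorem~\ref{t.periodic-generic}). The only point to watch is that you apply the dichotomy of Theorem~\ref{t.periodic-generic} to $\D_pf^\tau$ even when $\tau$ exceeds the minimal period of $p$; this is legitimate because Robinson's genericity result rules out the eigenvalue $1$ for $\D_pf^\tau$ for every $\tau$ with $f^\tau(p)=p$, not only the least one --- otherwise an elliptic point of lower period with a root-of-unity eigenvalue could in principle be accumulated by $\tau$-periodic points.
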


In the next part we will give other examples of generic properties.
They are often obtained in the same way:
\begin{itemize}
\item One first proves a \emph{perturbation result},
which is in general the difficult part.
In the previous example, one shows that for every integers $\tau,k\geq 1$, any $C^k$-diffeomorphism
$f$ can be perturbed in the space $\diff_v^k$ as a diffeomorphism $g$ whose periodic orbits of period $\tau$
are elliptic or hyperbolic.
\item One then uses Baire theorem for getting the genericity. An example of this standard argument is given at section~\ref{ss.density}.
\end{itemize}
The last two parts are devoted to some important perturbation results. In part~\ref{s.local},
we discuss Pugh's closing lemma that allows to create periodic orbits and Hayashi's closing lemma
that allows to glue two half orbits together; the perturbations in these two cases are local.
In part~\ref{s.global}, we state a connecting lemma for pseudo-orbits obtained with M.-C. Arnaud and C. Bonatti
through global perturbations and explain the main ideas of its proof.

\newpage
\renewcommand{\thesection}{\Roman{section}}
\section{Overview of genericity results on the dynamics of $C^1$ conservative surface diffeomorphisms}
This part is a survey of the properties satisfied by the $C^1$-generic
conservative diffeomorphisms of compact surfaces

\subsection{Discussions on the space $\diff_v^k$}
In general, the generic properties depend strongly on the choice of the space
$\diff_v^k$. We will here illustrate this on an example and explain why we will focus on the $C^1$-topology.

\subsubsectionruninhead{Some generic properties in different spaces.}
One of the first result was given by Oxtoby and Ulam~\cite{oxtoby-ulam}, in the $C^0$ topology.
\begin{theo}[Oxtoby-Ulam]\label{t.oxtoby-ulam}
For any generic homeomorphism $f\in \diff_v^0$, the invariant measure $v$ is ergodic.
\end{theo}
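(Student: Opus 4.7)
The plan is to apply the standard Baire approach: exhibit the set $\cE \subset \diff_v^0$ of homeomorphisms for which $v$ is ergodic as a dense $G_\delta$. The $G_\delta$ structure comes from the mean ergodic theorem. Fix a countable dense subset $\{\phi_n\}_{n \in \NN}$ of $C(M)$ and, for each continuous $\phi$ and each integer $N \geq 1$, set
$$a_N^\phi(f) := \int_M \left|\frac{1}{N} \sum_{k=0}^{N-1} \phi(f^k(x)) - \int_M \phi \, dv\right|^2 dv(x).$$
The integrand is uniformly bounded by $4\|\phi\|_\infty^2$, and for each fixed $x$ the iterate $f^k(x)$ depends continuously on $f$ in the $C^0$ topology, so dominated convergence makes $f \mapsto a_N^\phi(f)$ continuous. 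By the mean ergodic theorem applied to the Koopman operator of $f$, $a_N^\phi(f)$ converges to $\|P_f \phi - \int \phi \, dv\|_{L^2(v)}^2$, where $P_f$ is the $L^2(v)$-projection onto $f$-invariant functions; so $f$ is ergodic iff this limit vanishes for every $\phi_n$. Since the sequence converges, $\liminf = \lim$, and ergodicity can be rewritten as
$$\cE = \bigcap_{n \in \NN} \bigcap_{q \in \QQ_{>0}} \bigcap_{M_0 \in \NN} \bigcup_{N \geq M_0} \{f \in \diff_v^0 : a_N^{\phi_n}(f) < q\},$$
which is a $G_\delta$.

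The heart of the argument is density. Following Oxtoby and Ulam, choose for each $n$ a measurable partition of $M$ into $2^n$ cells of equal $v$-volume and diameter $O(2^{-n/2})$. Call $g \in \diff_v^0$ a \emph{cyclic dyadic approximant} at level $n$ if there exists a single $2^n$-cycle $\sigma$ of these cells such that $g$ maps each cell onto its $\sigma$-image modulo a $v$-null set. The key perturbation statement is that for every $f \in \diff_v^0$ and every $\epsilon > 0$, one can realize, for arbitrarily large $n$, such an approximant $g$ with $d_{C^0}(f, g) < \epsilon$. Granting this, an orbit of $g$ visits each cell exactly once in $2^n$ iterations, so for any continuous $\phi$ and $N = 2^n$ the Birkhoff sum $S_N^g \phi(x)$ differs from $\int \phi \, dv$ by at most the modulus of continuity of $\phi$ on a cell; consequently $g$ lies in $\bigcup_N \{a_N^\phi < q\}$ for any prescribed finite list of pairs $(\phi, q)$ once $n$ is sufficiently large. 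A diagonal argument matching the countable defining conditions of $\cE$ then produces, arbitrarily close to $f$, a genuinely ergodic homeomorphism.

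The main obstacle is the construction of the cyclic dyadic approximant: realizing an arbitrary $2^n$-cycle $\sigma$ by a \emph{volume-preserving} homeomorphism $C^0$-close to $f$. One first picks $\sigma$ so as to track, as closely as possible, how $f$ permutes the cells at scale $2^{-n/2}$, keeping the displacement $d(f, g)$ small; one then fills in by appealing to Oxtoby-Ulam's theorem on the existence of volume-preserving homeomorphisms between any two compact homeomorphic sets of equal $v$-volume, gluing the cell-to-cell maps continuously across boundaries. The $C^0$ flexibility of volume-preserving homeomorphisms — which has no $C^1$ analogue — is what makes this step possible, and is the reason the statement is specific to the $C^0$ topology. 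Combining density with the $G_\delta$ property, Baire's theorem concludes.
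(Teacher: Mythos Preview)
The paper does not prove this theorem; it is stated with attribution to Oxtoby and Ulam~\cite{oxtoby-ulam} and then used only as a point of contrast with higher regularity (KAM theory). So there is no ``paper's proof'' to compare against.

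On its own merits, your sketch follows the classical Oxtoby--Ulam line and is broadly sound. The $G_\delta$ argument via the mean ergodic theorem is correct. One small simplification: once you have shown that for each fixed triple $(n,q,M_0)$ the open set $\bigcup_{N\geq M_0}\{a_N^{\phi_n}<q\}$ is dense (which your cyclic approximants give), Baire's theorem immediately yields that $\cE$ is a dense $G_\delta$; the ``diagonal argument'' you mention, producing an actually ergodic $g$ near $f$, is unnecessary and somewhat misleading --- you never need to exhibit an ergodic homeomorphism directly.

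The real content, as you acknowledge, is the construction of the cyclic dyadic approximant $C^0$-close to a given $f$. Your outline (choose the cycle $\sigma$ to track how $f$ moves cells, then realize it by gluing volume-preserving cell-to-cell homeomorphisms) is the right shape, but this step is where all the work in~\cite{oxtoby-ulam} lies: one must argue carefully that a single $2^n$-cycle can be chosen so that every cell is sent near its $f$-image, and that the local cell maps can be made to match continuously on boundaries while preserving $v$. As written, this part is a plausible plan rather than a proof.
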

\textit{Ergodicity} means that for the measure $v$, the system can not be decomposed: any invariant Borel
set $A$ has either measure $0$ or $1$. By Birkhoff's ergodic theorem, \textit{the orbit of $v$-almost
every point is equidistributed in $M$}.\\
The $C^0$ topology also seems very weak: one can show that
since for any generic diffeomorphism, once there
exists a periodic point of some period $p$, then the set of $p$-periodic points is uncountable.
(In particular corollary~\ref{c.periodic-finite} does not hold for $\diff_v^0$.)

In high topologies, one gets Kolmogorov-Arnold-Moser theory. One of the finest forms is given
by Herman (see~\cite[section II.4.c]{moser}, \cite[chapitre IV]{herman} or \cite{yoccoz}).
\begin{theo}[Herman]\label{t.KAM}
There exists a non-empty open subset $\cU$ of $\diff_v^k$, with $k\geq 4$ and for any
diffeomorphism $f\in \cU$, there exists a smooth closed disk $D\subset M$ which is periodic
by $f$: the disks $D$, $f(D)$,\dots, $f^{\tau-1}(D)$ are disjoint and $f^\tau(D)=D$.
\end{theo}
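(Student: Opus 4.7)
The plan is to produce one concrete $f_0\in\diff_v^k$ possessing an elliptic fixed point to which KAM theory applies, and then to invoke the robustness of KAM invariant circles under $C^k$-small perturbations to obtain a whole neighbourhood $\cU$ of $f_0$. First I would construct $f_0$. Choose a small topological disk $D_0\subset M$ on which $v$ identifies, via a volume-preserving chart, with the standard area form of $\RR^2$, and let $p$ be the centre of $D_0$. Near $p$, consider the Hamiltonian $H(x,y)=\alpha(x^2+y^2)/2+\beta(x^2+y^2)^2/4$ cut off smoothly to be supported in $D_0$, and set $f_0$ to be the time-one map of its area-preserving flow, extended by the identity on $M\setminus D_0$. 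Picking $\alpha$ irrational and satisfying a Diophantine condition, and $\beta\neq 0$, ensures that $\D_p f_0$ is a rotation of angle $2\pi\alpha$ and that the Birkhoff normal form of $f_0$ at $p$ is non-degenerately twisting.

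Next I would apply Moser's invariant curve theorem. After passing to Birkhoff normal coordinates around $p$, the diffeomorphism $f_0$ looks, on a small punctured neighbourhood of $p$, like an area-preserving twist map of the annulus whose frequency varies non-trivially with the action. Moser's theorem then produces, for every Diophantine rotation number $\rho$ in a positive-measure set accumulating $\alpha$, a $C^{k-c}$ invariant closed curve $\gamma_\rho$ around $p$ on which $f_0$ acts as a rotation of angle $2\pi\rho$. Fix one such $\rho_0$ once and for all; then $\gamma_{\rho_0}$ bounds a smooth closed disk $D\subset D_0\subset M$ with $f_0(D)=D$, so $D$ is periodic of period $\tau=1$ (the disjointness condition being vacuous). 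For openness, I would use the parametric form of KAM: the Diophantine condition on $\rho_0$ and the twist non-degeneracy at the Birkhoff normal form level are $C^k$-open hypotheses, and the standard Newton iteration of KAM, run with $\rho_0$ held fixed, produces for every $f$ in a $C^k$-neighbourhood $\cU$ of $f_0$ an invariant curve $\gamma_{\rho_0}(f)$ close to $\gamma_{\rho_0}$ depending continuously on $f$. The disk $D(f)$ it bounds is $f$-invariant, and $\cU$ is the open set sought.

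The main obstacle is the KAM theorem itself: the small-divisor problem forces a Nash--Moser/Newton iteration with delicate control of regularity losses, and the hypothesis $k\geq 4$ reflects essentially the sharp differentiability threshold below which the twist theorem for area-preserving maps of the annulus is known to fail (Rüssmann, Herman). In a full write-up I would quote this theorem as a black box; the genuine content of the argument then reduces to checking the two structural hypotheses of KAM for the explicit $f_0$ constructed above, and to formulating the parametric version of KAM in the ambient space $\diff_v^k$ rather than in the more restrictive category of twist maps of an annulus in which it is usually stated.
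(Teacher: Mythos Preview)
The paper does not prove this theorem: it is stated as a known result due to Herman, with references to Moser, Herman and Yoccoz, and is used as a black box (notably in the proof that non-Anosov conservative surface diffeomorphisms can be perturbed to non-transitive ones). So there is no ``paper's own proof'' to compare against.

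Your sketch is a reasonable outline of the standard argument behind the cited result. The construction of $f_0$ as the time-one map of a compactly supported Hamiltonian with non-degenerate twist at an elliptic fixed point is the natural way to produce a concrete example, and invoking Moser's invariant curve theorem (with the differentiability threshold $k\geq 4$ reflecting the R\"ussmann--Herman optimal regularity) is exactly the mechanism that yields invariant circles and hence invariant disks. Your observation that the parametric/robust form of KAM --- persistence of a single Diophantine invariant curve under $C^k$-small perturbations --- is what upgrades one example $f_0$ into an open set $\cU$ is the key point, and you are right that this is the only place where real analytic work is hidden. One small remark: the paper phrases the conclusion for a periodic disk of some period $\tau$ (arising near an elliptic periodic orbit), whereas your construction produces an invariant disk ($\tau=1$) near an elliptic fixed point; this special case already suffices to exhibit the required open set $\cU$, so there is no gap here.
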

Theses disks are obtained as neighborhoods of the elliptic periodic orbits.
The dynamics in this case is very different from the generic dynamics in $\diff_v^0$
since the existence of invariant domains breaks down the ergodicity of $v$:
the orbit of any point of $D$ can not leave the set $D\cup f(D)\cup\dots \cup f^{\tau-1}(D)$.

\begin{rema}\label{r.smooth}
We should notice that by a result of Zehnder~\cite{zehnder2} for each $k\geq 1$,
the $C^\infty$-diffeomorphisms are dense in $\diff^k_v$.
Therefore, for any $1\leq k<\ell$, any property that is generic in $\diff^\ell_v$
will be dense in $\diff^k_v$. This result is not known is this generality
in higher dimensions for conservative diffeomorphisms.
\end{rema}

\subsubsectionruninhead{An elementary perturbation lemma.}\label{ss.elementary}
The reason why theorem~\ref{t.oxtoby-ulam} is true is that perturbations in $\diff_v^0$
are very flexible: for any homeomorphism $f\in\diff_v^0$ and any point $x\in M$, one
can perturb $f$ in order to modify the image of $f(x)$.
More precisely, if $y$ is close to $f(x)$, one chooses a small path $\gamma$ that
joints $f(x)$ to $y$. Pushing along $\gamma$, one can modify $f$ as homeomorphism
$g$ so that $g(x)=y$. The homeomorphisms $f$ and $g$ will coincide outside a small
neighborhood of $\gamma$. Hence, the $C^0$-norm of the perturbation is about equal
to the distance between $x$ and $y$.

In the space $\diff_v^1$, the $C^1$-norm of the perturbation
also should be small (for example smaller than $\varepsilon>0$) and one has to perturb $f$ on a larger domain
(in a ball of radius about $\varepsilon^{-1} \dd(x,y)$). This can been seen easily
from the mean value theorem: let $x$, $y$ be two points and $\varphi$ be a perturbation of the identity which satisfies $\varphi(x)=y$
and such that $\|\D \varphi-\id\|\leq \varepsilon$; then, if a point $z$ is not perturbed by $\varphi$
(i.e. $\varphi(z)=z$), we get
$$\|y-x\|=\|(\varphi(x)-x)-(\varphi(z)-z)\|\leq \varepsilon.\|z-x\|.$$
As a consequence, when $\varepsilon$ is small, the perturbation domain
has a large radius and lot of the orbits of $f$ will be modified.

This remark will be at the root of all the genericity results that will be presented below
(a more precise statement will be given at section~\ref{ss.elementary}).
It explains the difficulty of the perturbations in $\diff_v^1$.
In higher topologies, the situation becomes much more complicate since in $\diff_v^k$, the
radius of the perturbation domain should be at least $(\varepsilon^{-1}\dd(x,y))^{\frac 1 k}$.

This justifies why we will now work in $\diff_v^1$: we need a space of {\em diffeomorphisms}
where the {\em elementary perturbations} don't have a too large support.

\subsection{The closing and connecting lemmas}
From the elementary perturbation lemma, one
derivates more sophisticated perturbation lemmas.

\subsubsectionruninhead{Pugh's closing lemma.}
The first result was shown by Pugh~\cite{pugh1,pugh2,pugh-robinson,arnaud1}.
It allows to create by perturbation some periodic orbit once the dynamics is recurrent.
More precisely, one considers the points $z$ whose orbit is {\em non-wandering}:
for any neighborhood $U$ of $z$, there is a forward iterate
$f^n(U)$ of $U$ (with $n\geq 1$) which intersects $U$.

The local perturbation result is the following (see also figure~\ref{f.closing}):
\begin{figure}[ht]
\begin{center}
\input{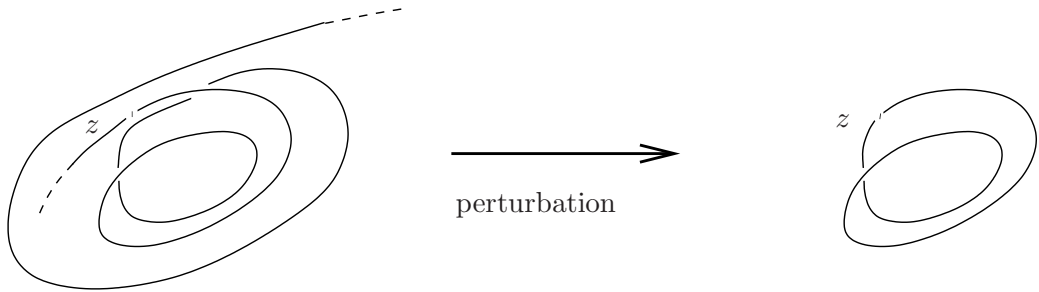}
\end{center}
\caption{Perturbation given by the closing lemma. \label{f.closing}}
\end{figure}
\begin{theo}[Closing lemma, Pugh]
Let $f$ be a $C^1$-diffeomorphism in $\diff_v^1$ and $z\in M$ a non-wandering point.
Then, there exists a $C^1$-small perturbation $g\in \diff_v^1$ of $f$ such that $z$ is a periodic orbit
of $f$.
\end{theo}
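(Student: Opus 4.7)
The plan is to follow Pugh's original strategy, adapted to the $C^1$-conservative surface setting. The proof has three essentially independent ingredients.

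\textbf{Step 1: Turning non-wandering into a near-return of an orbit.} Since $z$ is non-wandering, for every $\varepsilon>0$ one can find a point $x$ with $\dd(x,z)<\varepsilon$ and an integer $n\geq 1$ such that $\dd(f^n(x),z)<\varepsilon$, so the orbit segment $x,f(x),\ldots,f^n(x)$ begins and ends in an $\varepsilon$-neighborhood of $z$. In fact, using Poincaré recurrence (which is automatic in $\diff_v^1$) one can arrange for many intermediate iterates to re-enter a small ball around $z$, which will be crucial in Step~2.

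\textbf{Step 2: Spreading the closing perturbation along the orbit.} The naive idea—push $f^n(x)$ directly onto $x$—is ruled out by the elementary perturbation lemma of Section~\ref{ss.elementary}: realizing a displacement $\dd(x,f^n(x))$ with $C^1$-norm at most $\varepsilon$ requires a support of radius $\sim \varepsilon^{-1}\dd(x,f^n(x))$, which is too large and would sweep other iterates of the segment. Pugh's fundamental idea is to distribute the correction over many steps. One selects indices $0<k_1<\dots<k_r<n$ and, in a small ball $B_j$ around $f^{k_j}(x)$ small enough to avoid every other iterate of the segment, performs a tiny push $\varphi_j$ of $C^1$-size at most $\varepsilon$. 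Setting $g$ equal to $f$ composed with these pushes at the appropriate positions, the derivative cocycle propagates each push $v_j$ to a contribution $\D f^{n-k_j}(f^{k_j}(x))\cdot v_j$ to the final displacement, so that
\[
g^n(x)-f^n(x) \;\approx\; \sum_{j=1}^{r} \D f^{n-k_j}(f^{k_j}(x))\cdot v_j.
\]
One chooses the $v_j$ so that this sum equals $x-f^n(x)$, whence $g^n(x)=x$ and $x$ (and, after a final small initial adjustment, $z$) becomes periodic for $g$.

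\textbf{Step 3: Staying in $\diff_v^1$.} Each push $\varphi_j$ must preserve the volume $v$. On a two-dimensional chart this is straightforward: take $\varphi_j$ to be the time-one map of a compactly supported Hamiltonian vector field on $B_j$, automatically divergence-free and hence $v$-preserving; its $C^1$-size is governed by the $C^2$-size of its generating Hamiltonian, which one tunes freely to realize the prescribed $v_j$ under the threshold $\varepsilon$.

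The hard step is Step 2: one must show that the displacement vector $x-f^n(x)$ really does admit a decomposition $\sum_j \D f^{n-k_j}\cdot v_j$ in which each $v_j$ is small enough that a $C^1$-perturbation of size $\varepsilon$ can be supported in a ball $B_j$ thin enough to avoid all other iterates. This delicate balance between the cocycle's expansion/contraction, the $\varepsilon^{-1}$ cost of the elementary perturbation, and the geometric non-interference of the supports is the substance of Pugh's "fundamental lemma", and it is here that the repeated near-returns supplied in Step~1 are consumed: each near-return yields an independent admissible direction in which to distribute a portion of the required correction.
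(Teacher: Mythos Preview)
Your sketch misidentifies the mechanism of Pugh's argument and omits its most essential step. In Step~2 you propose to perturb at scattered indices $k_1<\dots<k_r$ along the orbit, with each support $B_j$ chosen ``small enough to avoid every other iterate of the segment'', and then to solve the linearized equation $\sum_j \D f^{n-k_j}\cdot v_j = x-f^n(x)$. This is not how the proof works, and as written it cannot be completed: when $n$ is large the iterates $f^i(x)$ may accumulate everywhere, so there is no lower bound on the radii such $B_j$ can have; simultaneously the cocycle norms $\|\D f^{n-k_j}\|$ are uncontrolled, and there is no reason for your linear system to admit a solution with each $|v_j|$ bounded by $\varepsilon$ times the radius of $B_j$. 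Your final sentence concedes that this ``delicate balance'' is the whole content of the argument but gives no indication of how it is achieved; and the phrase ``each near-return yields an independent admissible direction'' has no clear meaning on a surface.

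The paper separates two ideas that your sketch conflates. First, Pugh's perturbation lemma (Theorem~\ref{t.pugh}) spreads the perturbation over $N$ \emph{consecutive} iterates of a single small ball $\hat S$ around $z$---with $N$ fixed, depending only on $(f,\cU)$ and not on $n$---so that any $x\in S$ can be sent to $f^N(y)$ for any $y\in S$, the support being $\hat S\cup f(\hat S)\cup\dots\cup f^{N-1}(\hat S)$. This effectively restores the constant $\eta$ of the elementary lemma to a value at most $\tfrac12$. Second---and this is what you are missing entirely---comes a \emph{selection} step (Section~\ref{ss.proof-closing}): among all returns $p_0,\dots,p_s$ of the orbit to the ball, one chooses the pair $(p_{i_0},p_{j_0})$ minimizing the ratio $\dd(p_i,p_j)\big/\dd\!\left(\tfrac{p_i+p_j}{2},\partial B\right)$; this guarantees that the support of the perturbation sending $p_{j_0}$ onto $p_{i_0}$ misses every other return, so the intermediate segment of orbit survives intact and closes up. One never tries to make $x$ itself periodic; one closes a sub-segment determined by two well-chosen returns, and only then conjugates to place the periodic point at $z$.
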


\subsubsectionruninhead{Hayashi's connecting lemma.}
We have seen that the closing lemma allows to connect an orbit to itself. About 30 years later, Hayashi~\cite{hayashi, wen-xia, arnaud2} proved a second local perturbation lemma and
showed how to connect an orbit to another one (see figure~\ref{f.connecting}).
\begin{figure}[ht]
\begin{center}
\input{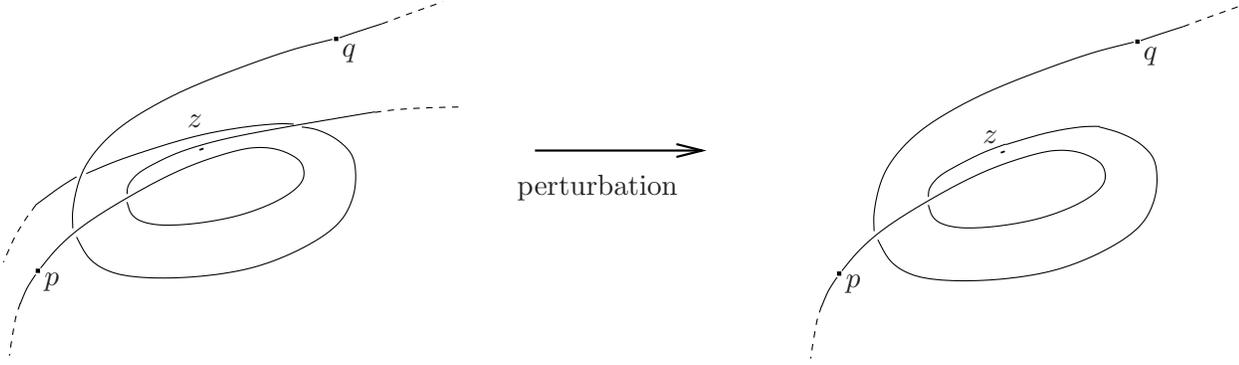}
\end{center}
\caption{Perturbation given by the connecting lemma. \label{f.connecting}}
\end{figure}

\begin{theo}[Connecting lemma, Hayashi]\label{t.connecting}
Let $f$ be a $C^1$ diffeomorphism in $\diff_v^1$ and $p,q,z\in M$ three points such that:
\begin{itemize}
\item both the accumulation sets of the forward orbit of $p$ and of the backward orbit of $q$
contain the point $z$;
\item the point $z$ is not periodic.
\end{itemize}
Then, there exists a $C^1$-small perturbation $g\in\diff^1_v$ of $f$ and an integer $n\geq 1$ such that
$g^n(p)=q$.
\end{theo}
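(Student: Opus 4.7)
The plan is to cascade the elementary perturbation lemma recalled in \S\ref{ss.elementary}, which asserts that deflecting a point $x$ to a nearby target $y$ by a $C^1$-perturbation of size $\varepsilon$ requires a support of radius at least of order $\varepsilon^{-1}d(x,y)$. Joining the orbits of $p$ and $q$ by a single elementary push is therefore impossible; one must locate a region of $M$ where a perturbation of large support can act without disrupting the two orbit segments one wants to glue. The non-periodicity of $z$ supplies exactly such a region.

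\textbf{Construction of a tube.} Fix $\varepsilon>0$ and choose an integer $N$ of order $\varepsilon^{-1}$. Since $z$ is non-periodic, the points $z, f(z), \dots, f^N(z)$ are pairwise distinct, so there is a small ball $U\ni z$ whose iterates $U, f(U), \dots, f^N(U)$ are pairwise disjoint. Partition $U$ into finitely many boxes $C_1, \dots, C_m$ of diameter $\delta \ll \varepsilon\cdot \mathrm{diam}(U)$. A conservative elementary perturbation of size $\varepsilon$, supported in the tube $T_i=\bigcup_{j=0}^{N} f^j(C_i)$, can then realise any prescribed displacement of order $\delta$ between an entry in $C_i$ and the exit in $f^N(C_i)$: the displacement is spread essentially linearly along the $N$ intermediate copies, which is what the elementary lemma (adapted to the volume-preserving setting via a Hamiltonian bump) yields.

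\textbf{Selecting the returns.} Because $z\in\omega(p)\cap\alpha(q)$, the forward orbit of $p$ enters $U$ at times $0<a_1<a_2<\cdots$, and the backward orbit of $q$ enters $U$ at times $0<b_1<b_2<\cdots$. Each visit falls in one of the finitely many boxes $C_1,\dots,C_m$; by pigeonhole on this finite alphabet, one produces indices $k,\ell$ with $f^{a_k}(p)$ and $f^{-b_\ell}(q)$ both lying in the same box $C_i$. A single conservative elementary perturbation inside $T_i$ then deflects the orbit of $p$ so that, after $a_k+b_\ell+1$ iterates of the perturbed map $g$, the point $p$ lands on $q$; outside $T_i$, $g$ coincides with $f$.

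\textbf{Main obstacle.} The naive pigeonhole above is insufficient, because the intermediate orbit segment from $f^{a_k+1}(p)$ to $f^{-b_\ell-1}(q)$ may itself traverse the tube $T_i$ several times; the perturbation designed to connect $p$ to $q$ would then deflect those intermediate passages in an uncontrolled way, possibly destroying the intended connection (or creating spurious ones). The crux of Hayashi's argument is a refinement of the selection producing a \emph{clean} pair $(a_k,b_\ell)$, for which either the intermediate orbit avoids $T_i$ altogether or, by a simultaneous rerouting of the finitely many intermediate passages, the same localised perturbation remains consistent on every one of them. Arranging this combinatorial selection while ensuring the perturbations preserve the volume $v$ is the technical heart of the proof.
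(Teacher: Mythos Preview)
Your outline is faithful up to the paragraph ``Main obstacle'', and that paragraph correctly names the difficulty; but the proposal then stops precisely where the proof begins. What you have written is essentially the closing-lemma argument (\S\ref{ss.proof-closing}) together with the observation (\S\ref{ss.difficulty-connecting}) that it does not transfer to the connecting lemma. The sentence ``the crux of Hayashi's argument is a refinement of the selection producing a \emph{clean} pair'' is not a proof step; and in fact it points in the wrong direction.

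Hayashi does \emph{not} find a single clean pair $(a_k,b_\ell)$ whose intermediate segment avoids the tube. In general no such pair exists. Instead, one accepts that the pseudo-orbit from $p$ to $q$ makes many returns to the cube $B$, and one performs \emph{many} elementary perturbations, one per pair of nearby returns. Concretely (see \S\ref{s.connecting}): the cube $B$ is endowed with a tiling of bounded geometry (figure~\ref{f.tiled}); a first selection extracts from the chronological list of returns $(p_0,\dots,p_r,q_{-s},\dots,q_0)$ a subsequence $(x'_0,y'_0,\dots,x'_{\ell'},y'_{\ell'})$ so that each tile carries at most one pair $(x'_i,y'_i)$; a second selection (the ``shortcuts'' of \S\ref{ss.shortcuts}) resolves the conflicts that arise when the supports of two Pugh perturbations in adjacent tiles overlap, by merging the two pairs into one. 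The point that makes this terminate is that the tiling has bounded combinatorics (each tile touches at most $12$ others), so the number of conflicts per tile is bounded a priori, and the supports can be chosen small enough (remark~\ref{r.small-support}) that after at most $12$ mergers no new conflict appears. The resulting perturbations have disjoint supports and their composition lies in $\cU$; together they turn the pseudo-orbit into a genuine orbit from $p$ to $q$.

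Two smaller remarks. Your pigeonhole step is unnecessary: since $z\in\omega(p)\cap\alpha(q)$, one simply chooses the last return $p_r$ of $p$ and the first return $q_{-s}$ of $q$ to lie in the central tile, arbitrarily close to $z$. And your tube construction omits the non-conformality issue: the boxes $C_i$ must be chosen with respect to Pugh's adapted metric $d'$ (theorem~\ref{t.pugh}), not the ambient one, or else the spread-out perturbation need not stay inside the iterated tube.
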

The second assumption in the connecting lemma is purely technical (and maybe not essential).
Some stronger versions of this result are given below.

\begin{rema}
In the closing and connecting lemmas, the perturbations are local: there exists an integer $N\geq 1$ such that
the support of the perturbation is contained in an arbitrarily small neighborhood of the segment of orbit
$\{z,f(z),\dots,f^{N-1}(z)\}$.
\end{rema}

\subsubsectionruninhead{The connecting lemma for pseudo-orbits.}
One can restate Hayashi's connecting lemma in the following form. It is possible to connect $p$ to $q$
by perturbation provided that these points are almost on the same orbit: at some place (close to $z$)
one allows a small jump between the forward orbit of $p$ and the backward orbit of $q$.
The connecting lemma for pseudo-orbits, proved in~\cite{BC,arnaud-bonatti-crovisier}
allows to deal with any number of jumps
(see figure~\ref{f.pseudo-connecting}).
\begin{figure}[ht]
\begin{center}
\input{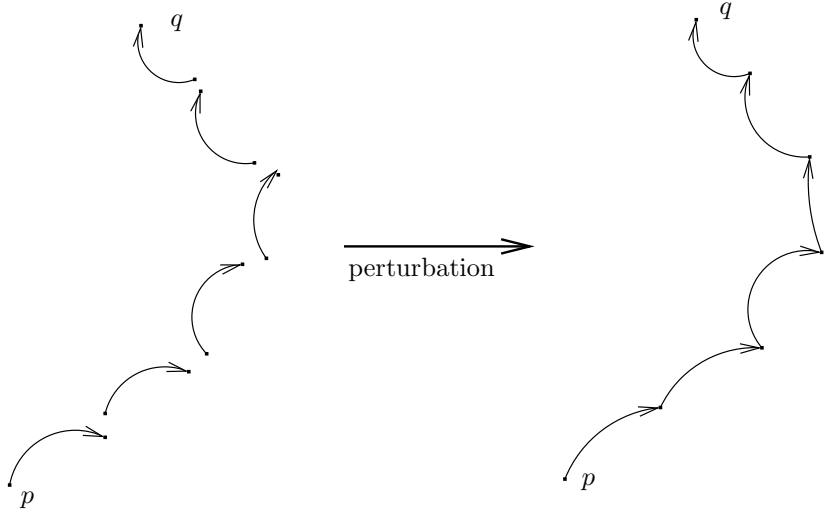}
\end{center}
\caption{Perturbation given by the connecting lemma for pseudo-orbits. \label{f.pseudo-connecting}}
\end{figure}

For any $\varepsilon>0$, we say that a sequence $(z_0,\dots,z_\ell)$ is a $\varepsilon$-pseudo-orbit of $f$
if for any $k\in \{0,\dots,\ell-1\}$, we have $\dd(f(z_k),z_{k+1})<\varepsilon$. In other terms, this sequence
is an orbit with small errors, bounded by $\varepsilon$, at each iterations.

\begin{theo}[Connecting lemma for pseudo-orbits, Bonatti-Crovisier, Arnaud-B-C]
\label{t.pseudo-connecting}
Let $f$ be a $C^1$-diffeomorphism in $\diff^1_v$ such that
for each $\tau\geq 1$, the set periodic points with period $\tau$ of $f$ is finite.
Let $p,q\in M$ be two points such that
for each $\varepsilon>0$, there exists a $\varepsilon$-pseudo-orbit
$(p=z_0,z_1,\dots,z_{\ell-1},z_\ell=q)$ which joints $p$ to $q$.

Then, there exists a $C^1$-small perturbation $g\in\diff^1_v$ of $f$ and an integer $n\geq 1$ such that
$g^n(p)=q$.
\end{theo}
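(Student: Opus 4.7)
The plan is to deduce this statement from iterated applications of the elementary perturbation lemma of section~\ref{ss.elementary}, organized by a combinatorial reduction that first replaces the given pseudo-orbit by one with a bounded number of jumps. Fix $\varepsilon>0$ and aim at producing a perturbation of $C^1$-size $\varepsilon$. First I would cover the manifold $M$ by a finite family $\cB=\{B_1,\dots,B_N\}$ of small ``perturbation tiles'' with pairwise disjoint supports, chosen so that any two points $x,y$ lying in the same $B_i$ can be interchanged by a volume preserving $C^1$-perturbation of the identity of norm at most $\varepsilon$ supported in $B_i$. The existence of such tiles is a direct consequence of the elementary perturbation estimate of section~\ref{ss.elementary}, provided the diameter of each tile is sufficiently small relative to its inner radius.

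Given such a tiling, the main step is to transform a very fine $\varepsilon'$-pseudo-orbit $(p=z_0,z_1,\dots,z_\ell=q)$ (with $\varepsilon'\ll\varepsilon$) into a reduced pseudo-orbit $(p=w_0,w_1,\dots,w_m=q)$ in which each tile is visited at most once. The mechanism is a shortcut procedure: whenever the original pseudo-orbit enters a tile $B_i$ at several disjoint time windows, one keeps only the first entry and the last exit inside $B_i$ and connects them by a single in-tile jump, discarding the intermediate segment. Iterating this operation across all tiles yields a pseudo-orbit with at most $m\leq N$ jumps, each localized within one of the $B_i$. It is precisely here that the finiteness hypothesis on periodic points of each period enters in a crucial way: it is used to show that the shortcut procedure terminates in a controlled way and that the resulting in-tile jumps stay of admissible size for the elementary perturbation, since an infinite family of periodic orbits of the same period could force arbitrarily complicated revisits to a single tile.

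Finally, I would realize each of the $m$ in-tile jumps by its associated elementary perturbation: since the supports are pairwise disjoint, these local perturbations assemble into a single $g\in\diff^1_v$ at $C^1$-distance at most $\varepsilon$ from $f$, and by construction iteration of $g$ sends $p$ through $w_0,w_1,\dots,w_m=q$, so that $g^n(p)=q$ for some $n\geq 1$. The main obstacle is the combinatorial reduction in the middle step: one must prove, using the finite-periodic-orbits hypothesis, that each tile is visited only a bounded number of times along the relevant segment of pseudo-orbit, and that the shortcutting does not replace a bounded pseudo-orbit jump by a jump too large to be absorbed by an $\varepsilon$-elementary perturbation inside a single tile. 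This interplay between the global combinatorics of pseudo-orbits and the local $C^1$-flexibility provided by the elementary perturbation lemma is the essence of the argument developed in part~\ref{s.global}.
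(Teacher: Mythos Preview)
Your proposal has a genuine gap at the very first step, and it propagates through the rest. You claim that for each tile $B_i$ of small diameter one can move any two points $x,y\in B_i$ onto each other by a $C^1$-perturbation of size $\varepsilon$ \emph{supported in $B_i$}. The elementary perturbation lemma says exactly the opposite: to send $x$ onto $y$ with a $C^1$-perturbation of size $\varepsilon$, the support must be a ball of radius roughly $\varepsilon^{-1}\dd(x,y)$, hence \emph{much larger} than the distance between the points when $\varepsilon$ is small (see section~\ref{ss.elementary} and Proposition~\ref{p.elementary}, where the constant $\eta$ blows up as $\varepsilon\to 0$). Consequently there is no family of small, pairwise disjoint tiles in which one can absorb an arbitrary in-tile jump by a single $\varepsilon$-small elementary perturbation; and incidentally a connected manifold cannot be covered by finitely many pairwise disjoint open sets. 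Even granting the tiles, your shortcut ``keep first entry, last exit'' runs into the obstruction explained in section~\ref{s.elementary} (figure~\ref{f.break}): the orbit segment you are trying to close may return through the support of the perturbation and be broken by it.

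The actual proof does not reduce to elementary perturbations. It rests on Pugh's device of \emph{spreading the perturbation over $N$ iterates} (theorem~\ref{t.pugh}) and on Hayashi's tiled-cube machinery (section~\ref{s.connecting}), upgraded to the notion of \emph{perturbation domain} satisfying property~(P) (theorem~\ref{t.connecting3}). One then builds finitely many such domains $B_k$, each disjoint from its first $N-1$ iterates and from the other domains and their iterates, together with a pseudo-orbit from $p$ to $q$ whose jumps respect their tilings; property~(P) removes the jumps domain by domain. The finiteness of periodic points of each period is used not to control a shortcut combinatorics, but to guarantee that the set $\Sigma_{2N}$ of periodic points of period $\le 2N$ is finite, so that one can place the perturbation domains away from it and hence disjoint from their own iterates. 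Your outline misses both the need to spread in time and the correct role of the periodic-point hypothesis.
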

By corollary~\ref{c.periodic-finite}, the first assumption of theorem~\ref{t.pseudo-connecting}
is generic in $\diff^1_v$ (here again, this technical assumption is probably not essential). We will see below that
the second one is always satisfied (we assumed that $M$ is connected).

\subsection{$C^1$-generic properties}
We now give some consequences of these perturbation lemmas.

\subsubsectionruninhead{Density of periodic points.}\label{ss.density}
The first consequence shows the important role played by the periodic orbits in $C^1$-dynamics.
For any diffeomorphism $f$ and any non-empty open set $U$, using the fact that $f$ preserves a smooth
finite measure, we get that a forward iterate $f^n(U)$ of $U$ intersects $U$. This shows that
any point in $M$ is non-wandering.

Now, Pugh's closing lemma implies:
\begin{theo}[Pugh]\label{t.coro-closing}
For any generic diffeomorphism $f\in \diff_v^1$, the periodic points of $f$ are dense in $M$.
\end{theo}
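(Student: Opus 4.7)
The plan is to apply Baire's theorem to a countable family of open dense sets constructed from Pugh's closing lemma. Fix a countable basis $(U_n)_{n\geq 1}$ of nonempty open subsets of $M$ and set
$$\mathcal{O}_n = \{f\in \diff_v^1 : \exists\, p\in U_n,\; \exists\, \tau\geq 1,\; f^\tau(p)=p \text{ and } \det(\D_p f^\tau - \id)\neq 0\}.$$
If each $\mathcal{O}_n$ is open and dense, then $\bigcap_n \mathcal{O}_n$ is residual and any diffeomorphism in this intersection has periodic points in every $U_n$, hence a dense set of periodic points. So the task reduces to proving openness and density of each $\mathcal{O}_n$.

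\emph{Openness} of $\mathcal{O}_n$ follows from the implicit function theorem applied, in local charts, to the map $(g,x)\mapsto g^\tau(x)-x$. The non-degeneracy condition $\det(\D_p f^\tau-\id)\neq 0$ means the partial derivative in $x$ is invertible at $(f,p)$, which produces a continuous solution $x(g)\in U_n$ of $g^\tau(x)=x$ for $g$ in a $C^1$-neighborhood of $f$; the determinant condition then persists by continuity of $\D g^\tau$.

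\emph{Density} of $\mathcal{O}_n$ is the heart of the matter. Given $f\in \diff_v^1$ and any $C^1$-neighborhood $W$ of $f$, I pick any $z\in U_n$. Because $f$ preserves the finite measure $v$, Poincaré recurrence (applied to the nonempty open sets whose iterates all have the same positive $v$-measure and so must overlap) shows that every point of $M$ is non-wandering; in particular so is $z$. Pugh's closing lemma then furnishes $g_1\in W$ and $\tau\geq 1$ with $g_1^\tau(z)=z$. However $g_1$ need not lie in $\mathcal{O}_n$, since $1$ could be an eigenvalue of $\D_z g_1^\tau$. To repair this, I would apply a further local, $C^1$-small, volume-preserving perturbation supported in a small neighborhood of the finite orbit $\{z,g_1(z),\dots,g_1^{\tau-1}(z)\}$ that modifies the differential $\D_z g_1^\tau$ so that $1$ is no longer an eigenvalue (for instance by making the fixed point hyperbolic or elliptic). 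This is exactly the type of linear perturbation underlying Robinson's Theorem~\ref{t.periodic-generic}. The resulting $g_2\in W$ still satisfies $g_2^\tau(z)=z$ and is now $1$-elementary at $z$, so $g_2\in \mathcal{O}_n$.

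\emph{The main obstacle} is this concatenation of two perturbations: Pugh's lemma creates a periodic orbit but gives no control on its differential, whereas one then needs an independent local linear adjustment of $\D_z g_1^\tau$ which must be compatible with the volume form and must preserve the newly created periodic orbit. Once this linear step is handled, Baire's theorem finishes the proof.
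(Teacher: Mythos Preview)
Your proposal is correct and follows essentially the same route as the paper: a countable basis $(U_n)$, open dense sets $\cO_n$ of diffeomorphisms having a nondegenerate periodic point in $U_n$, openness by the implicit function theorem, and density by first applying the closing lemma (every point is non-wandering since $v$ is preserved) and then a further small local perturbation from Robinson's transversality argument to make the periodic point nondegenerate. The only cosmetic difference is that the paper phrases the nondegeneracy as ``hyperbolic or elliptic'' rather than your determinant condition $\det(\D_p f^\tau-\id)\neq 0$, but either formulation yields an open set and is achieved by the same linear perturbation step.
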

We recall that according to Robinson's theorem~\ref{t.periodic-generic},
each periodic orbit is either elliptic or hyperbolic.

The proof follows from the closing lemma by a classical argument.
We give it as an example of Baire theory.
\begin{proof}[Sketch of the proof of theorem~\ref{t.coro-closing}]
Let $(U_n)$ be a countable basis of neighborhoods in $M$. We fix $n$ and we have to show that
any generic diffeomorphism has a periodic point in $U_n$.

By the implicit function theorem, the diffeomorphisms which have a hyperbolic or elliptic periodic point in $U_n$
form an open set $\cU_n$. It now sufficient to show that for each $n$, the set $\cU_n$ is dense:
the countable intersection $\bigcap\cU_n$ then will be a dense $G_\delta$ set, hence generic.

Let us consider any diffeomorphism $f_0$. Since any point in $U_n$ is non-wandering, by a small perturbation, one
can create in $U_n$ a periodic point $p$ for a diffeomorphism $f_1$ close to $f_0$.
For a new perturbation $f_2$
(given by a transversality argument extracted from the proof of theorem~\ref{t.periodic-generic})
the point $p$ will be elliptic or hyperbolic, as required.
\end{proof}

\subsubsectionruninhead{Study of the elliptic orbits.}
According to the last result, it should be interesting to study separately the two kinds of periodic orbits
that appear in generic dynamics.
The elliptic behavior was studied intensively but mostly in higher differentiability.
The following result is not specific to the $C^1$-topology.

\begin{theo}[Zehnder\cite{zehnder1}]\label{t.density-saddle}
Let $f$ be a generic diffeomorphism in $\diff_v^1$.
Any elliptic periodic orbit is accumulated by hyperbolic saddles. Hence, the hyperbolic periodic points
are dense in $M$.
\end{theo}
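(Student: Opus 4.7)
The ``hence'' clause follows at once: theorem~\ref{t.coro-closing} gives a dense set of periodic points for a generic $f$, theorem~\ref{t.periodic-generic} says each of them is elliptic or a hyperbolic saddle, and the first assertion says every elliptic one is a limit of hyperbolic saddles. I therefore focus on the first assertion. It will reduce, by a standard Baire argument, to the local perturbation statement: if $f\in\diff_v^1$ admits an elliptic periodic orbit $p,\dots,f^\tau(p)=p$, then in any $C^1$-neighbourhood of $f$ and any open neighbourhood $V$ of $p$ there exists $g\in\diff_v^1$ possessing a hyperbolic saddle inside $V$, with unstable multiplier of prescribed modulus close to (but distinct from) $1$. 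Granting this, one works inside Robinson's residual set, uses corollary~\ref{c.periodic-finite} to see that elliptic orbits of bounded period form a finite set depending continuously on $f$ (implicit function theorem, applicable because the non-real spectrum of $\D_p f^\tau$ stays away from $1$), and intersects, over a countable basis $(V_n)$ of open sets and parameters $N,m\in\NN$, the open sets of those $f$ exhibiting a hyperbolic saddle of unstable multiplier at least $1+1/m$ within distance $1/m$ of each elliptic orbit of period $\leq N$ meeting $V_n$.

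To prove the perturbation statement I would proceed in three steps. Step~1 (\emph{rational rotation number}): the non-real eigenvalues of $\D_pf^\tau$ depend continuously on $f$, so a $C^1$-small perturbation supported near the orbit of $p$ brings them to $e^{\pm 2i\pi m/q}$ with $q$ as large as desired, making $\D_pf^{\tau q}=\id$. Step~2 (\emph{local model}): by the elementary perturbation of section~\ref{ss.elementary} applied in Darboux charts around each point of the orbit and distributing the perturbation as $g=\psi_{\tau q-1}\circ f\circ\dots\circ\psi_0\circ f$, where each $\psi_i$ is a Hamiltonian isotopy of small $C^1$-size supported in a small disk around $f^i(p)$, one can arrange that on a small disk $D\ni p$ the iterate $g^{\tau q}$ coincides with the time-one map $\phi_H$ of a prescribed compactly supported Hamiltonian $H\colon D\to\RR$. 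Step~3 (\emph{saddle creation}): pick $H$ to be a Morse function having a non-degenerate saddle critical point at some $q_0\in D\setminus\{p\}$. Then the linearization of $\phi_H$ at $q_0$ is the exponential of $J\operatorname{Hess}_{q_0}H$, whose eigenvalues are real and different from $\pm 1$, so $q_0$ is a hyperbolic saddle of $g^{\tau q}$ (hence of $g$) lying inside $V$, and its unstable multiplier is controlled by $\|H\|_{C^2}$.

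The main obstacle is Step~2: transferring a prescribed local perturbation of the iterate $f^{\tau q}$ to an actual $C^1$-small, volume-preserving perturbation of $f$. Two ingredients make this feasible. First, after Step~1 the $\tau q$ points of the orbit are pairwise distinct, and by choosing the disks around them much smaller than the minimum pairwise distance, the supports of the $\psi_i$ are pairwise disjoint; consequently, in a Darboux chart at $p$, $g^{\tau q}$ reduces to $f^{\tau q}$ post-composed with the sum of the appropriate pullbacks of the $\psi_i$ by iterates of $f$, which one chooses so as to reproduce $\phi_H$. Second, $\|\psi_i\|_{C^1}$ is bounded by $\|\phi_H\|_{C^1}$ times the operator norm of an iterate of $\D f$, a constant depending only on $f$ and $\tau q$; one therefore first fixes $\|H\|_{C^1}$ small and then shrinks the disks until the total $C^1$-perturbation of $f$ fits within the prescribed budget. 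With these technical controls in place, Steps~1--3 produce the required $g$, and the Baire reduction from the first paragraph concludes.
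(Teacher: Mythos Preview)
Your strategy differs from the paper's. There one first perturbs the rotation angle at $p$ to be \emph{irrational}, then uses Pugh's closing lemma near $p$ (where the return map is $C^1$-close to an irrational rotation) to manufacture a \emph{new} periodic point $q$ of very large period $k\tau$, and finally applies Franks' lemma (theorem~\ref{t.franks}) to the tangent cocycle along the orbit of $q$: since $k$ is large, small rotations at $k$ of the iterates suffice to bring the product to the identity, after which one more small perturbation makes $q$ a saddle. You go the opposite way, making the angle rational so that $\D_p f^{\tau q}=\id$, and then try to implant a saddle directly via a Hamiltonian local model.

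Step~2, however, does not work as written. The claim that $g^{\tau q}$ ``reduces to $f^{\tau q}$ post-composed with the sum of the appropriate pullbacks of the $\psi_i$'' is only a first-order approximation; the honest expression is a nested composition. More importantly, you cannot arrange $g^{\tau q}|_D=\phi_H$ with compactly supported $\psi_i$: if for instance only $\psi_0$ (supported near $p$) is nontrivial, one computes $g^{\tau q}=\psi_0\circ f^{\tau q}$ near $p$, and forcing this to equal $\phi_H$ gives $\psi_0=\phi_H\circ f^{-\tau q}$, which on $\partial D$ equals $f^{-\tau q}\neq\id$ and hence cannot be cut off to the identity. The repair is to settle for $g^{\tau q}=\phi_H\circ f^{\tau q}$ and argue that the saddle of $\phi_H$ survives the composition: since $\D_p f^{\tau q}=\id$, on a small enough disk the factor $f^{\tau q}$ is $C^1$-close to the identity, and if its $C^1$-distance to $\id$ is small compared to $\|\operatorname{Hess}_{q_0}H\|$ the hyperbolic fixed point persists. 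This can be made to work, but it needs the careful three-way balancing of disk radius, Hessian size, and perturbation budget that your last paragraph only hints at --- and note that in $\diff^1_v$ you have no modulus of continuity for $\D f^{\tau q}$, so the disk radius must be chosen \emph{after} fixing~$\varepsilon$, not from an a priori rate.
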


We give a different proof from Zehnder's original one (which used approximations by smooth diffeomorphisms).
We need a useful perturbation lemma by Franks which allows to change the tangent map along the periodic orbits.
\begin{theo}[Franks lemma, \cite{franks2,bonatti-diaz-pujals}]\label{t.franks}
Let $q$, $f(q)$,\dots, $f^\tau(q)=q$ be a periodic orbit of a conservative diffeomorphism $f$.
Then, any $C^0$-perturbation of the sequence of linear tangent maps $(\D_q f, \D_{f(q)}f$,\dots, $\D_{f^{\tau-1}(q)}f)$,
that preserves the volume forms $v$, can be realized as the sequence of tangent maps
$(\D_qg, \D_{g(q)}g$,\dots, $\D_{g^{\tau-1}(q)}g)$ associated to a conservative diffeomorphism $g$
that is $C^1$-close to $f$ and preserves the orbit $\{q,f(q), \dots,f^{\tau-1}(q)\}$.
\end{theo}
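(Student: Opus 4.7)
The plan is to build $g$ by composing $f$ with a single global area-preserving perturbation $\Psi$ supported in disjoint neighborhoods of the orbit points. The proof reduces to (a) a local construction in each chart producing an area-preserving diffeomorphism with a prescribed derivative at $0$, and (b) a routine global assembly.

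For the reduction to a local problem, set $q_i=f^i(q)$ for $i=0,\dots,\tau-1$, let $A_i:=\D_{q_i}f$ be the current tangent maps, and let $A_i'$ denote the prescribed perturbation. Choose pairwise disjoint open neighborhoods $U_i$ of the orbit points and, by the Moser theorem for volume forms, pick charts $\varphi_i:U_i\to V_i\subset\RR^2$ sending $v$ to standard Lebesgue measure and $q_i$ to $0$. Read in these charts, the hypothesis that the perturbation preserves $v$ makes each $A_i$ and $A_i'$ a matrix of determinant $1$, so that $B_i:=A_i'A_i^{-1}$ lies in $\operatorname{SL}(2,\RR)$ and is close to the identity. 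It thus suffices to build, for each $i$, an area-preserving diffeomorphism $\psi_i$ of the chart $V_i$ that (i) coincides with $B_{i-1}$ on a small neighborhood of $0$, (ii) equals the identity outside a slightly larger ball, and (iii) has $C^1$-norm bounded by a constant times $\|B_{i-1}-\id\|$. The global map $\Psi$, equal in the chart $\varphi_i$ to $\psi_i$ and to the identity outside the $U_i$, is then a well-defined volume-preserving $C^1$-small perturbation of the identity, and $g:=\Psi\circ f$ fixes the orbit pointwise and has $\D_{q_i}g=B_iA_i=A_i'$.

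The heart of the argument is the construction of $\psi_i$. A naive multiplicative cutoff of the map $B_{i-1}$ by a bump function destroys area preservation, so one must interpolate at the level of a generating Hamiltonian. Writing $B=B_{i-1}=\exp(M)$ with $M\in\mathfrak{sl}(2,\RR)$ small, and denoting by $J$ the standard symplectic matrix on $\RR^2$ (with $J^T=-J$, $J^2=-I$), a direct computation shows that $S:=-JM$ is symmetric, so the quadratic Hamiltonian $H(x)=\tfrac12\,x^T S x$ generates the linear area-preserving flow $x\mapsto\exp(tM)x$, whose time-$1$ map is exactly $B$. Fix a smooth radial cutoff $\rho:[0,\infty)\to[0,1]$ equal to $1$ on $[0,\tfrac12]$ and vanishing on $[1,\infty)$, and set $\tilde H(x)=\rho(|x|/\delta)H(x)$. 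Its Hamiltonian flow is automatically area-preserving; the time-$1$ map $\psi_i$ equals $B$ on $\{|x|\le\delta/2\}$ and the identity outside $\{|x|\le\delta\}$. Because $H$ is quadratic of size $\|M\|$ and each factor $1/\delta$ produced by differentiating $\rho(\cdot/\delta)$ is offset by a factor $\delta$ coming from $|x|$ on the support, the second derivatives of $\tilde H$ are of order $\|M\|$ uniformly in $\delta$; hence $\psi_i-\id$ has $C^1$-norm $O(\|B-\id\|)$, as required.

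The main obstacle is precisely this volume-preserving interpolation. The trick that makes it work is to perform the truncation on the generating Hamiltonian rather than on the map itself, exploiting the two-dimensional fact that the Hamiltonian flow of any smooth function on $\RR^2$ automatically preserves area. Everything else—the scaling estimate, the assembly of the $\psi_i$ into a global $\Psi$, and the verification that $\D_{q_i}g=A_i'$ and that $g$ remains in $\diff^1_v$—is routine once this local model is in hand.
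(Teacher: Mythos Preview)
The paper does not supply a proof of this theorem: it is stated with the references \cite{franks2,bonatti-diaz-pujals} and then invoked in the proof of theorem~\ref{t.density-saddle}. So there is no in-paper argument to compare against, and your proposal must be judged on its own.

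Your argument is correct and is in fact the natural one in the surface setting. The key identification $\operatorname{SL}(2,\RR)=\operatorname{Sp}(2,\RR)$ is exactly what lets you replace the naive bump-function cutoff (which would destroy the Jacobian condition) by a Hamiltonian interpolation, and your scaling computation showing that $\|\D^2\tilde H\|=O(\|M\|)$ uniformly in $\delta$ is the crucial estimate. Two small points worth tightening:
\begin{itemize}
\item The claim that the time-$1$ map of $\tilde H$ equals $B$ on the whole of $\{|x|\le\delta/2\}$ is slightly too strong: an orbit starting near the boundary of that ball may exit it before time $1$. What is true (and sufficient) is that the time-$1$ map equals $B$ on any smaller ball $\{|x|\le c\delta\}$ with $c<\tfrac12$ once $\|M\|$ is small enough; in particular the derivative at $0$ is $B$, which is all you use.
\item You should say explicitly that $\exp\colon\mathfrak{sl}(2,\RR)\to\operatorname{SL}(2,\RR)$ is a local diffeomorphism at $0$, so that $B$ close to $\id$ indeed has a logarithm $M$ with $\|M\|=O(\|B-\id\|)$.
\end{itemize}
With these cosmetic fixes the proof is complete. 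Note that this Hamiltonian argument is specific to dimension $2$ (or to the symplectic category); the volume-preserving Franks lemma in higher dimension, as treated in \cite{bonatti-diaz-pujals}, requires a different construction since a generic element of $\operatorname{SL}(n,\RR)$ near the identity is not symplectic.
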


\begin{proof}[Sketch of the proof of theorem~\ref{t.density-saddle}]
Let us consider some elliptic periodic point $p$. We explain how to create by perturbation a hyperbolic
saddle $q$ close to $p$. Theorem~\ref{t.density-saddle} then follows from a standard Baire argument.

We first recall that the derivative $\D_pf^\tau$ along the orbit of $p$ is conjugate to a rotation.
By a transversality argument, one may perturb the dynamics so that the angle of this rotation is irrational.
The dynamics in a neighborhood of the orbit of $p$ looks like the dynamics of an irrational rotation.
Hence, by a small perturbation one creates in a neighborhood of the orbit of $p$ a second periodic point $q$
whose period $\tau'$ is an arbitrarily large multiple $k\tau$ of $\tau$: this argument uses Pugh's closing lemma for
dynamics $C^1$-close to the rotation.

Note that the derivative along the orbit of $q$ is close to the rotation $(\D_pf^\tau)^k$.
Since the period $k\tau$ of $q$ can be chosen arbitrarily large,
one can perturb the derivative at points $q$, $f^\tau(q),\dots,f^{k-1}\tau(q)$ (by compositing by small rotations)
in order to obtain a sequence of linear maps whose product is equal to the identity.
By using Franks lemma (theorem~\ref{t.franks}), one gets a $C^1$-small perturbation of $f$
whose derivative along the orbit of $q$ is the identity.
By a new arbitrarily $C^1$-small perturbation, the orbit of $q$ can be made of saddle type.
\end{proof}

\subsubsectionruninhead{Study of the hyperbolic saddles.}
We have seen that the hyperbolic periodic orbits are dense in $M$.
The next result shows the existence of {\em homoclinic intersections}, i.e. of transverse intersections between
the stable and unstable manifolds of these periodic orbits.
This is important since a transverse intersection between the invariant manifolds of a saddle
periodic orbit implies the existence of non-trivial
{\em hyperbolic sets}: a compact and invariant set is {\em hyperbolic}
if the tangent bundle over $K$ splits as two one-dimensional bundles, one is uniformly contracted and
the other one is uniformly expanded. The periodic saddle are the simplest examples of hyperbolic
sets but Smale has shown that the homoclinic intersections imply the existence of larger hyperbolic sets,
that are Cantor sets, and called {\em horseshoes}. The dynamics on horseshoes is very rich, but has been
well described.

\begin{theo}[Takens~\cite{takens}]\label{t.horseshoe}
For any generic diffeomorphism $f$ and any periodic point $p$,
the transverse intersection points between the invariant manifolds $W^s(p)$ and $W^u(p)$ of $p$
are dense in $W^s(p)$ and $W^u(p)$.
\end{theo}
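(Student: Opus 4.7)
The plan is a Baire-category argument whose engine is the connecting lemma for pseudo-orbits (Theorem~\ref{t.pseudo-connecting}). I first reduce to a local picture: since $W^s(p)=\bigcup_{k\geq 0} f^{-k}(W^s_{loc}(p))$ (and symmetrically for $W^u$) and transverse intersections are preserved by $f$, it is enough to place transverse intersections densely inside the local invariant manifolds. Working on the residual set where Theorem~\ref{t.periodic-generic} and Corollary~\ref{c.periodic-finite} apply, the hyperbolic saddles form a countable collection, and each saddle $p$ admits a unique continuation $p_g$ on a $C^1$-neighborhood of $f$ with continuously varying local invariant manifolds. Fix a countable basis $(U_n)$ of $M$; for each saddle $p$ and each $n$, let $\cO_{p,n}$ be the set of diffeomorphisms $g$ such that either $W^s(p_g)\cap U_n=\emptyset$, or $W^s(p_g)$ and $W^u(p_g)$ admit a transverse intersection inside $U_n$, and define $\cO'_{p,n}$ symmetrically with $W^u$ in place of $W^s$. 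Stability of transverse intersections makes each such set $C^1$-open; the theorem reduces to showing density.

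For density, fix $g$ near $f$ with $W^s(p_g)\cap U_n\neq\emptyset$. Pick $x_0\in W^s(p_g)\cap U_n$; iterating forward, choose $k\geq 0$ with $x=g^k(x_0)\in W^s_{loc}(p_g)$, and pick $y\in W^u_{loc}(p_g)\setminus\{p_g\}$. Because $M$ is connected and $g$ preserves volume, the remark following Theorem~\ref{t.pseudo-connecting} provides $\varepsilon$-pseudo-orbits from $y$ to $x$ for every $\varepsilon>0$; a standard covering argument lets me further require that such pseudo-orbits avoid, except near their endpoints, a prescribed small neighborhood of the orbit of $p_g$ and of the forward segment $\{x_0,g(x_0),\ldots,x\}$. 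Applying Theorem~\ref{t.pseudo-connecting} yields an arbitrarily $C^1$-small perturbation $g'$ of $g$ and an integer $m\geq 1$ with $(g')^m(y)=x$, whose support lies in an arbitrarily thin tube around the chosen pseudo-orbit. Because the pseudo-orbit does not revisit $y$ or the forward segment $x_0,g(x_0),\ldots,x$ (other than at the endpoints), $g'$ leaves the backward orbit of $y$ and the forward orbit of $x_0$ unchanged, so $p_{g'}=p_g$, and $y\in W^u(p_{g'})$, $x\in W^s(p_{g'})$, $(g')^k(x_0)=x$. Combined with $(g')^m(y)=x$, this makes $x_0\in U_n$ a homoclinic intersection of $p_{g'}$. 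A last arbitrarily $C^1$-small perturbation localized near $x_0$ and not touching the orbit of $p_{g'}$ --- realized for instance by tilting the direction of the branch of $W^u(p_{g'})$ passing through $x_0$, in the spirit of Franks' lemma (Theorem~\ref{t.franks}) --- upgrades the intersection to a transverse one. Density for the sets $\cO'_{p,n}$ follows by running the same argument for $g^{-1}$.

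The main obstacle is the support control in Theorem~\ref{t.pseudo-connecting}: I must construct pseudo-orbits from $y$ to $x$ that avoid, except near their endpoints, both the orbit of $p_g$ and the forward segment $x_0,g(x_0),\ldots,x$, and verify that the connecting lemma indeed provides a perturbation supported in a correspondingly thin tube. Without this localization, the continuation $p_{g'}$, its invariant manifolds, or the $k$-step iterates needed to place the homoclinic intersection inside $U_n$ could drift out of position. Once the support is controlled, producing the transverse intersection via a final small perturbation and assembling the Baire argument are routine, parallel to the steps used in Theorems~\ref{t.periodic-generic} and~\ref{t.coro-closing}.
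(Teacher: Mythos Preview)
Your route differs from the paper's. The paper never invokes the connecting lemma for pseudo-orbits; it uses only the \emph{local} Hayashi connecting lemma (in the form of section~\ref{ss.conclusion-connecting}). The key observation you bypass is dynamical: pick $q\in W^u(p)$; since $q$ is non-wandering there are points $z_k\to q$ with $f^{n_k}(z_k)\to q$, and the local hyperbolic picture near $p$ forces each such orbit segment to pass close to $W^s(p)$ at some intermediate time, accumulating on a point $q'\in W^s(p)$. One then applies Hayashi's lemma twice --- once near the non-periodic point $q$, once near $q'$ --- to stitch $W^u(p)$ to the segment and the segment to $W^s(p)$. Support control is immediate and built into the statement: the perturbation lives in $U\cup f(U)\cup\dots\cup f^{N-1}(U)$ for small neighborhoods $U$ of $q$ and $q'$, which can be taken disjoint from $p$, from each other, and from any prescribed finite orbit piece.

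Your approach is not wrong in spirit, but the obstacle you flag in your last paragraph is real and is exactly what the paper's argument sidesteps. Theorem~\ref{t.pseudo-connecting} as stated gives no support information; to justify that the perturbation avoids the backward orbit of $y$ (accumulating on $p$), the forward segment $x_0,\dots,x$, and $p$ itself, you would have to go inside the proof in Part~\ref{s.global} and argue that the perturbation domains $B_k$ and their $N-1$ iterates can be placed away from these sets --- feasible, but genuine extra work. The paper's recurrence-plus-local-hyperbolicity step produces a \emph{true} orbit passing near both invariant manifolds, so two applications of the local lemma with transparent support suffice; your pseudo-orbit shortcut trades that dynamical insight for a heavier perturbation tool whose localization you then have to recover by hand.
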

The proof now is easy from the connecting lemma.
\begin{figure}[ht]
\begin{center}
\input{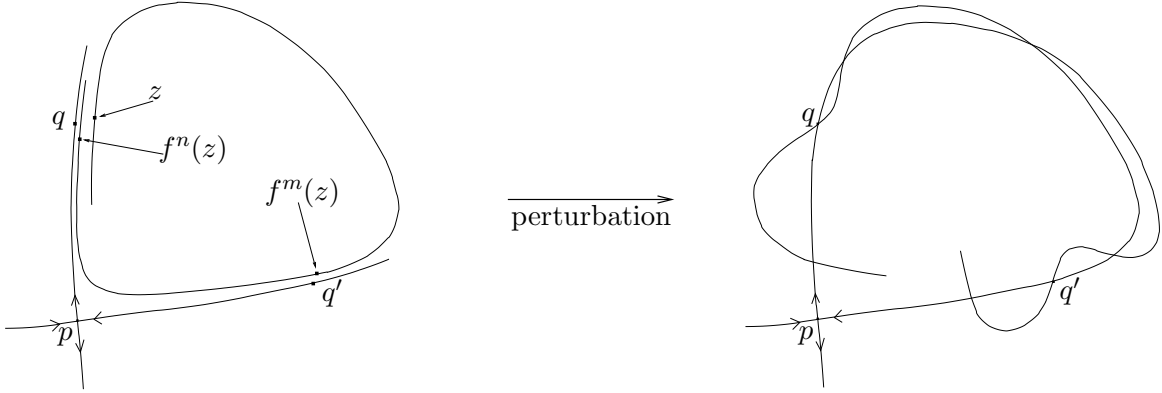}
\end{center}
\caption{Creation of a homoclinic point by perturbation. \label{f.homoclinic}}
\end{figure}

\begin{proof}[Sketch of the proof of theorem~\ref{t.horseshoe}]
Let $p$ be a saddle of some diffeomorphism $f$.
We show how to build one intersection between the stable and unstable manifolds.

Let us choose some point $q$ on the unstable manifold of $p$. Since $q$ is non-wandering, it is accumulated
by a sequence $(z_k)$ and by a sequence of forward iterates $(f^{n_k}(z_k))$ of the points $z_k$.
We note that each sequence $(z_k,f(z_k),\dots,f^{n_k}(z_k))$ should come close to the stable manifold of $p$
before visiting $q$. Hence, there is a sequence $(f^{m_k}(z_k))$ with $1<m_k<n_k$ which converges to
a point $q'$ on the stable manifold of $p$.

Now, the connecting lemma\footnote{In fact, we use here a variation on the connecting lemma
given later in the text. This variation asserts than one chooses first the place where the perturbation is realized
and then the orbits that should be connected. This will be stated precisely at section~\ref{ss.conclusion-connecting}.}
(used twice) allows to connect the unstable manifold at $q$ and the stable manifold at $q'$ to a
same segment of orbit $(z_k,f(z_k),\dots,f^{m_k}(z_k))$ (see figure~\ref{f.homoclinic}).
One gets an intersection at $q$ between the stable and unstable manifolds of $p$.
By a new perturbation, this intersection becomes transverse.
\end{proof}

\subsubsectionruninhead{Global dynamics.}
Up to here, we proved that the periodic points, and saddles, are dense in $M$.
One could imagine however that $M$ has some invariant domains so that each periodic orbit stay in a small region
of $M$. We will see that this picture is wrong generically.

We state before a property of conservative diffeomorphism which justifies the role of pseudo-orbits:
\begin{prop}\label{p.chainrecurrence}
For any diffeomorphism $f\in\diff_v^1$ and any $\varepsilon>0$, any two points $p,q\in M$
can always be jointed by an $\varepsilon$-pseudo-orbit.
\end{prop}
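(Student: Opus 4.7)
Fix $p, q \in M$ and $\varepsilon > 0$. If $q = p$, the singleton sequence $(p)$ is itself an $\varepsilon$-pseudo-orbit of length $0$ (the pseudo-orbit condition is vacuous), so assume $q \neq p$. The plan is to introduce the reachable set
\[
R := \{z \in M : \exists\, \ell \geq 1,\ \exists \text{ an } \varepsilon\text{-pseudo-orbit } (p = z_0, z_1, \ldots, z_\ell = z)\}
\]
and to prove $R = M$.

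Three properties of $R$ are straightforward to check. First, $R$ is open: the defining inequality $d(f(z_{\ell-1}), z_\ell) < \varepsilon$ is strict, so the final point $z_\ell$ can be perturbed. Second, $R$ is forward-invariant: appending $f(z)$ at the end of any pseudo-orbit ending at $z$ adds a zero-length jump, so $f(z) \in R$. Third, $R$ is \emph{$\varepsilon$-absorbing}: for any $z \in R$, every $w \in B(f(z), \varepsilon)$ can be appended to the pseudo-orbit, so $B(f(R), \varepsilon) \subset R$. Note also $f(p) \in R$, so $R$ is non-empty.

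The heart of the argument is to deduce $R = M$. Since $f$ preserves the smooth probability measure $v$, we have $v(f(R)) = v(R)$, hence $v(R \setminus f(R)) = 0$. The $\varepsilon$-absorbing property then implies that the open set
\[
U := B(f(R), \varepsilon) \setminus \overline{f(R)}
\]
is contained in $R \setminus f(R)$ and thus has measure zero; since $v$ has full support, $U = \emptyset$. Now assume for contradiction that $\overline{f(R)} \neq M$. Since $M$ is connected and $\overline{f(R)}$ is a proper non-empty closed subset, its topological boundary is non-empty; picking $x$ in this boundary one finds $y \in B(x, \varepsilon) \setminus \overline{f(R)}$, and such a $y$ lies in $U$, contradicting $U = \emptyset$. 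Hence $\overline{f(R)} = M$, so $f(R)$ is dense, every point of $M$ lies within $\varepsilon$ of $f(R)$, i.e.\ $B(f(R), \varepsilon) = M$, and the absorbing property yields $R = M$.

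The main obstacle is this last step: extracting $R = M$ requires jointly exploiting volume preservation (forcing $v(R \setminus f(R)) = 0$), the $\varepsilon$-absorbing property (which promotes the measure-zero set to an open set), and the connectedness of $M$ (so that a proper non-empty closed subset has non-empty boundary). The smoothness of $v$, and in particular its full support, is also crucial in converting a ``measure-zero open set'' into an ``empty open set''; the argument visibly breaks down without volume preservation or without connectedness.
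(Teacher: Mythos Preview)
Your proof is correct and reaches the same conclusion, but the route differs from the paper's. The paper invokes Poincar\'e recurrence: since $v$-almost every point is recurrent and $v$ has full support, recurrent points are dense; for any $x$ in the reachable set $X$ one perturbs $x$ slightly to a nearby recurrent point, uses a close return $f^\ell(x)$ to $x$, and deduces that the whole ball $\B(x,\varepsilon/2)$ lies in $X$. This uniform openness makes $X$ both open and closed, and connectedness of $M$ finishes. Your argument bypasses Poincar\'e recurrence entirely, exploiting measure preservation more directly via $v(R\setminus f(R))=0$ together with the absorbing property $\B(f(R),\varepsilon)\subset R$ and full support of $v$ to force $\overline{f(R)}=M$. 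What the paper's approach buys is a slightly stronger intermediate statement (a uniform $\varepsilon/2$-ball around every reachable point) and a cleaner dynamical picture; what your approach buys is that it avoids the somewhat informal ``change $x$ a little bit so that $x$ is recurrent'' step and makes the use of full support of $v$ completely explicit. Both arguments rest on the same three ingredients---an invariant finite measure, its full support, and connectedness of $M$---so neither is more general than the other.
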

\begin{proof}
We recall Poincar\'e recurrence theorem: almost any point $x\in M$ is {\em recurrent}: the accumulation set of the
forward orbit of $x$ contains $x$. In particular, the recurrent points are dense in $M$.\\
Let us fix some $\varepsilon>0$ and denote by $X\subset M$ be the set of points that can be attained from
$p$ by a $\varepsilon$-pseudo-orbit. We choose $x\in X$. There exists a $\varepsilon$-pseudo-orbit
from $p$ to $x$. It is possible to change a little bit the point $x$ so that $x$ is a recurrent point.

Hence, one can consider a return $f^\ell(x)$ of the orbit of $x$ which is close to $x$. This shows that
any point $y$ in the ball $\B(x,\varepsilon/2)$ can be jointed from $x$ by a $\varepsilon$-pseudo-orbit.
By concatening the pseudo-orbit from $p$ to $x$ with the pseudo-orbit from $x$ to $y$, one deduces that
$y$ may be attained from $p$ by a $\varepsilon$-pseudo-orbit.
Thus, $\B(x,\varepsilon/2)$ is contained in $X$, for any point $x\in X$.
Since $M$ is connected, one deduces that $X=M$ and in particular $q\in X$.
\end{proof}

As a consequence of the connecting lemma for pseudo-orbits, one gets the following global result:
\begin{theo}[Bonatti-Crovisier]\label{t.transitivite}
Any generic diffeomorphism $f\in \diff^1_v$ is transitive: there is a G$_\delta$ and dense subset
$\cG\subset M$ such that the forward orbits of any $x\in \cG$ is dense in $M$.
\end{theo}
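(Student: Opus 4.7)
The plan is the standard Baire-category argument: write the desired set of diffeomorphisms as a countable intersection of open dense subsets, where openness is easy and density is provided by the connecting lemma for pseudo-orbits.

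Fix a countable basis $(U_n)_{n\geq 1}$ of non-empty open subsets of $M$. For any pair $(n,m)$, introduce
\[
\cT_{n,m}=\{f\in\diff_v^1 \mid \exists k\geq 1,\ f^k(U_n)\cap U_m\neq\emptyset\}.
\]
Each $\cT_{n,m}$ is open in $\diff_v^1$, since if $f^k(U_n)\cap U_m$ contains an interior point, the same remains true after a small $C^1$-perturbation of $f$. The key step is to show that each $\cT_{n,m}$ is dense. Let $\cR$ denote the intersection of all $\cT_{n,m}$ with the generic set provided by corollary~\ref{c.periodic-finite} (periodic points of each period finite); then $\cR$ is residual.

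To show density of $\cT_{n,m}$, fix $f_0\in \diff_v^1$ and an arbitrary $C^1$-neighborhood of $f_0$. First perturb $f_0$ to some $f_1$ in this neighborhood so that $f_1$ has only finitely many periodic points of each period --- this is possible by corollary~\ref{c.periodic-finite}, after perhaps another small perturbation. Pick any $p\in U_n$ and $q\in U_m$. By proposition~\ref{p.chainrecurrence}, for every $\varepsilon>0$ there is an $\varepsilon$-pseudo-orbit of $f_1$ joining $p$ to $q$, so the second hypothesis of theorem~\ref{t.pseudo-connecting} is satisfied. Applying this theorem yields a $C^1$-small perturbation $g$ of $f_1$ (still in the initial neighborhood of $f_0$) and an integer $k\geq 1$ with $g^k(p)=q$. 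In particular $g^k(U_n)\cap U_m\neq\emptyset$, so $g\in \cT_{n,m}$.

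It remains to verify that any $f\in\cR$ has a dense $G_\delta$ subset $\cG\subset M$ of points with dense forward orbit. A point $x$ has dense forward orbit if and only if for every $m$, $f^k(x)\in U_m$ for some $k\geq 0$, so
\[
\cG=\bigcap_{m\geq 1}\bigcup_{k\geq 0} f^{-k}(U_m).
\]
Each set $V_m:=\bigcup_{k\geq 0} f^{-k}(U_m)$ is open; and $V_m$ is dense because, given any basis element $U_n$, the condition $f\in\cT_{n,m}$ furnishes a $k\geq 1$ with $f^k(U_n)\cap U_m\neq\emptyset$, i.e.\ $U_n\cap V_m\neq\emptyset$. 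Hence $\cG$ is a dense $G_\delta$ in $M$, as required.

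The only real work lies in proving density of $\cT_{n,m}$, and that is where theorem~\ref{t.pseudo-connecting} does all the heavy lifting; the minor technical point is checking that the hypothesis on finiteness of periodic points of each fixed period can be secured by a preliminary generic perturbation via corollary~\ref{c.periodic-finite}, and that the pseudo-orbit hypothesis comes for free from proposition~\ref{p.chainrecurrence}, which uses the connectedness of $M$.
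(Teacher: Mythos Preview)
Your argument is correct and follows the same route as the paper: a countable basis of open sets, proposition~\ref{p.chainrecurrence} to produce pseudo-orbits, theorem~\ref{t.pseudo-connecting} to close them after a preliminary perturbation into the generic set of corollary~\ref{c.periodic-finite}, and then a Baire argument in $M$. The paper's proof is much terser --- it defines $\cG_n$ directly as the set of points in $M$ whose forward orbit meets $U_n$ and asserts its density ``using proposition~\ref{p.chainrecurrence} and the connecting lemma for pseudo-orbits,'' leaving the Baire argument in $\diff_v^1$ implicit --- whereas you make that step explicit via the sets $\cT_{n,m}$; but the substance is the same.
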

\begin{proof}
Let $(U_n)$ be a countable basis of neighborhoods of $M$.
Using proposition~\ref{p.chainrecurrence}
and the connecting lemma for pseudo-orbits, we get that for each open set $U_n$, the
set $\cG_n\subset M$ of points whose forward orbit meets $U_n$ is dense in $M$. It is also open.
Hence, $\cG=\bigcap \cG_n$ is a dense G$_\delta$ subset of $M$ whose points have a forward orbit dense in $M$.
\end{proof}
An other consequence of theorem~\ref{t.pseudo-connecting}, is that for a generic diffeomorphism $f$ and any pair of periodic
saddles $p$ and $q$, the invariant manifolds $W^s(f^i(p))$ and $W^u(f^j(q))$ of some iterates of $p$ and $q$
intersect transversally. One says that $f$ possesses a unique {\em homoclinic class}.

This property implies the existence of many other periodic points. For example, there exists a periodic orbit
(whose period is very large) which shadows the orbit of $p$ during a large number of iterates and then
shadows the orbit of $q$. Since the periodic saddles are dense in $M$
(by theorem~\ref{t.density-saddle}), we get the announced property:
\begin{coro}
For any generic diffeomorphism in $\diff^1_v$ and any $\varepsilon$, there exists a periodic orbit
which is $\varepsilon$-dense in $M$ (i.e. which meets any ball of radius $\varepsilon$ in $M$).
\end{coro}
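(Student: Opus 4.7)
The plan is to combine the genericity of dense hyperbolic saddles with the fact that, generically, all saddles lie in a single homoclinic class, and then to weave a large finite family of saddles into a single periodic orbit via the inclination lemma.

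First, I would fix $\varepsilon>0$ and, using Theorem~\ref{t.density-saddle}, select a finite collection of hyperbolic saddles $p_1,\dots,p_N$ whose union is $(\varepsilon/2)$-dense in $M$. A periodic orbit that passes within distance $\varepsilon/2$ of each $p_i$ will then automatically be $\varepsilon$-dense in $M$, so the corollary reduces to producing one such orbit.

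Next, I would invoke the observation recorded just before the statement: for a generic $f\in\diff^1_v$, Proposition~\ref{p.chainrecurrence} together with Theorem~\ref{t.pseudo-connecting} yields transverse heteroclinic intersections between the invariant manifolds of any two given saddles. Applying this cyclically to the ordered pairs $(p_i,p_{i+1})$ (indices mod $N$), one obtains, after possibly replacing each $p_i$ by an iterate, transverse heteroclinic points $z_i\in W^u(p_i)\cap W^s(p_{i+1})$, thereby closing a heteroclinic cycle through $p_1,\dots,p_N$. Then I would run the inclination ($\lambda$-)lemma around this cycle: near each $p_i$ fix a small box with local stable/unstable product structure, and observe that sufficiently many forward iterates of $f$ send any small unstable disk through $p_i$ arbitrarily $C^1$-close to $W^u(p_{i+1})$ inside the box at $p_{i+1}$. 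Composing these transitions around the whole cycle produces a contracting graph-transform map from a small unstable disk at $p_1$ into itself; its fixed point is a periodic point of $f$ whose orbit visits each box in turn, hence passes within $\varepsilon/2$ of each $p_i$.

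The step I expect to be the main obstacle is the Baire-theoretic bookkeeping needed to make step two rigorous. Existence of transverse heteroclinic intersections between \emph{every} ordered pair of saddles is not a property of a fixed countable family, since the saddles themselves depend on $f$. The standard fix is to rephrase it as a countable family of open-dense conditions indexed by pairs of open boxes from a countable basis of $M$, and by upper bounds on the periods, invoking Theorem~\ref{t.pseudo-connecting} to establish density of each such condition. Once this is arranged simultaneously with the generic properties provided by Corollary~\ref{c.periodic-finite} and Theorem~\ref{t.density-saddle}, the $\lambda$-lemma construction above is classical and produces the desired $\varepsilon$-dense periodic orbit on a single residual subset of $\diff^1_v$.
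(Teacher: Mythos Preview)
Your proposal is correct and follows essentially the same route as the paper: select finitely many saddles that are $\varepsilon/2$-dense (via Theorem~\ref{t.density-saddle}), use the single-homoclinic-class property derived from Theorem~\ref{t.pseudo-connecting} to obtain a transverse heteroclinic cycle through them, and then shadow the cycle by a genuine periodic orbit. The paper compresses the last step into the remark that the homoclinic class property ``implies the existence of many other periodic points\dots\ which shadow the orbit of $p$ during a large number of iterates and then shadow the orbit of $q$,'' whereas you spell out the $\lambda$-lemma mechanism and the Baire bookkeeping; these are exactly the details behind the paper's one-line justification.
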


\subsubsectionruninhead{Repartition of periodic orbits.}
It is possible to improve the density theorem~\ref{t.coro-closing}
by describing the repartition of the periodic orbits.
From the measure theory, this is the ergodic closing lemma due to Ma\~n\'e~\cite{mane}.
\begin{theo}[Ergodic closing lemma, Ma\~n\'e]
Let $f$ be a generic diffeomorphism in $\diff_v^1$. Then, any invariant probability measure\footnote{We do not assume
here that the measure $\mu$ is ergodic. Hence, a consequence of this result is that in the convex and compact set
$\cM$ of invariant measures, the ergodic measures (these are the extremal points of $\cM$) are dense.} $\mu$
of $f$ is the weak limit of periodic measures $(\mu_n)$ whose supports converge towards the support of $\mu$.
\end{theo}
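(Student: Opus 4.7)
The plan is to combine Birkhoff's ergodic theorem with a strong form of Pugh's closing lemma to approximate ergodic measures by periodic ones, then to use the connecting lemma for pseudo-orbits (Theorem~\ref{t.pseudo-connecting}) to handle non-ergodic invariant measures by building single periodic orbits that visit several ergodic components, and finally to package everything with a Baire category argument. I will assume throughout that $f$ lies in the generic set where each period $\tau$ carries only finitely many periodic points (Corollary~\ref{c.periodic-finite}) and where the conclusions of Theorems~\ref{t.coro-closing}--\ref{t.transitivite} hold, so that the perturbation tools of part~\ref{s.global} are available.

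\textbf{Ergodic case.} Let $\mu\in\cM$ be ergodic and fix $\varepsilon>0$. By Birkhoff's theorem and Poincar\'e recurrence, $\mu$-a.e.\ point $x$ is recurrent and satisfies $\frac{1}{N}\sum_{k=0}^{N-1}\delta_{f^k(x)}\to\mu$ weakly. Choosing such an $x\in\operatorname{supp}(\mu)$, for all $N$ large enough the segment $\{x,f(x),\dots,f^{N-1}(x)\}$ has empirical measure $\varepsilon$-close to $\mu$ and satisfies $\dd(f^N(x),x)<\varepsilon$. A strengthening of Pugh's closing lemma (whose proof follows the local perturbation scheme of section~\ref{ss.elementary} combined with the quantitative estimate on the support of elementary perturbations) produces $g\in\diff_v^1$ with $\dd_{C^1}(f,g)<\varepsilon$ and an integer $\tau\leq N$ such that $g$ has a $\tau$-periodic orbit shadowing the above segment. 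Both the resulting periodic measure $\mu_g$ and its support are then $O(\varepsilon)$-close to those of the segment, hence to $\mu$ and $\operatorname{supp}(\mu)$.

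\textbf{Non-ergodic case.} A general $\mu\in\cM$ is approximated, via the ergodic decomposition, by finite rational convex combinations $\sum_{i=1}^r\alpha_i\mu_i$ with $\mu_i$ ergodic and $\bigcup_i\operatorname{supp}(\mu_i)$ Hausdorff-close to $\operatorname{supp}(\mu)$. The ergodic step yields, after one perturbation, periodic points $p_1,\dots,p_r$ of some $g_0$ close to $f$ whose periodic measures approximate the $\mu_i$ and whose orbits approximate the supports $\operatorname{supp}(\mu_i)$. Under our genericity assumption the discussion following Theorem~\ref{t.transitivite} gives a unique homoclinic class, so any two saddles $p_i,p_j$ can be joined by pseudo-orbits. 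Applying the connecting lemma for pseudo-orbits successively, one builds a further $C^1$-small perturbation $g$ of $g_0$ possessing a single periodic orbit which shadows each $O(p_i)$ for a fraction of iterations approximately equal to $\alpha_i$. Its empirical measure then approximates $\sum\alpha_i\mu_{p_i}$, and its support approximates $\bigcup\operatorname{supp}(\mu_i)$, hence $\operatorname{supp}(\mu)$.

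\textbf{Baire argument and main obstacle.} Fix a countable basis $(W_n)$ of open sets of $\mathcal{P}(M)\times\mathcal{K}(M)$, endowed with the weak-$*$ and Hausdorff topologies. Let $\cG_n$ be the set of $f\in\diff_v^1$ such that either no pair $(\mu,\operatorname{supp}(\mu))$ with $\mu\in\cM(f)$ lies in $W_n$, or $f$ admits a periodic measure whose pair lies in $W_n$. The second alternative is $C^1$-open because hyperbolic and elliptic periodic orbits persist under small perturbations (as in the proof of Theorem~\ref{t.coro-closing}); the first is open because $f\mapsto\cM(f)$ is upper semi-continuous in the weak-$*$ topology. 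Density of $\cG_n$ follows from the two previous steps applied to any $\mu\in\cM(f)$ whose pair lies in $W_n$. The generic set $\bigcap_n\cG_n$ then yields the theorem. I expect the main difficulty to be the strong closing lemma of the ergodic step: one needs the closed orbit to shadow the chosen segment uniformly (not only to match its Birkhoff average), and in the conservative setting one must control both the perturbation domain according to section~\ref{ss.elementary} and the preservation of the volume $v$. The secondary subtlety is the simultaneous connection of $r$ prescribed orbits in Step~2, which relies crucially on the full strength of Theorem~\ref{t.pseudo-connecting}.
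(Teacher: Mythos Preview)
The paper does not give a proof of this theorem: it is stated as a result of Ma\~n\'e with a citation to~\cite{mane} and nothing more, so there is no argument in the text to compare against.

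As for your proposal itself, the central gap is in the ergodic step. What you call a ``strengthening of Pugh's closing lemma'' is precisely the substance of Ma\~n\'e's theorem, and it does \emph{not} follow from the perturbation scheme of section~\ref{s.elementary}. Pugh's argument (section~\ref{ss.proof-closing} and Theorem~\ref{t.pugh}) closes an orbit by selecting two returns $p_{i_0},p_{j_0}$ that are close in the hyperbolic metric of the ball $B$; the resulting periodic orbit is the segment between these two returns, and there is no control on which returns get selected. In particular the closed orbit need not shadow the long Birkhoff-typical segment $\{x,\dots,f^{N-1}(x)\}$, so its empirical measure need not be close to~$\mu$. Obtaining a closed orbit that \emph{shadows} a prescribed segment requires Ma\~n\'e's genuinely different selection mechanism (his set $\Sigma(f)$ of well-closable points and the proof that it has full measure for every invariant~$\mu$), which you have not supplied. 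You flag this as ``the main difficulty'', and it is, but the proposal does not address it.

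There are also secondary problems. In the non-ergodic step you perturb $f$ to a $g_0$ carrying $r$ periodic orbits and then invoke the unique homoclinic class and Theorem~\ref{t.pseudo-connecting} for $g_0$; but these are generic properties of $f$, not of the perturbed $g_0$, and nothing guarantees they persist. Moreover Theorem~\ref{t.pseudo-connecting} gives no control on the proportion of time the connected orbit spends near each $p_i$, so matching the weights $\alpha_i$ is not automatic. Finally, in the Baire argument, alternative~(A) is not open: the map $\mu\mapsto\operatorname{supp}(\mu)$ is only lower semicontinuous in the Hausdorff topology, so a sequence of pairs $(\mu_k,\operatorname{supp}(\mu_k))\in W_n$ with $\mu_k\to\mu$ can have $\operatorname{supp}(\mu)$ strictly smaller than the Hausdorff limit of $\operatorname{supp}(\mu_k)$, and the limit pair may leave $W_n$.
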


From the topological point of view, one may wonder what are the regions of $M$ that are shadowed
by a single periodic orbit. This requires another global perturbation
lemma that we won't detail here (see~\cite{crovisier}).

We say that an invariant compact set $K$ is {\em chain transitive} if for any points $p,q\in K$
and any $\varepsilon>0$, there is a $\varepsilon$-pseudo-orbit contained in $K$ that joints $p$ to $q$.
The result is the following:

\begin{theo}[Crovisier]
For any generic diffeomorphism in $\diff^1_v$ and any invariant compact set $K$,
the set $K$ is the Hausdorff limit of a sequence of periodic orbits\footnote{This means
that for any $\delta>0$, there exists a periodic orbit contained in the $\delta$-neighborhood of $K$
and which crosses all the $\delta$-balls centered at points of $K$: at scale $\delta$, the periodic orbit
$\cO$ and the set $K$ can not be distinguished.}
if and only if it is chain-transitive.
\end{theo}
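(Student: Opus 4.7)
The necessity of chain-transitivity is straightforward. Suppose $K$ is the Hausdorff limit of a sequence of periodic orbits $(\cO_n)$. Given $p,q\in K$ and $\varepsilon>0$, I use the uniform continuity of $f$ on a neighborhood of $K$ to choose $\eta>0$ such that $\dd(x,y)<\eta$ implies $\dd(f(x),f(y))<\varepsilon/3$, with $\eta<\varepsilon/3$. Take $n$ large enough that $d_H(\cO_n,K)<\eta$, pick $p',q'\in\cO_n$ within $\eta$ of $p$ and $q$, write the orbit segment $p'=z_0,z_1,\dots,z_\ell=q'$, and for each $i$ select $k_i\in K$ with $\dd(z_i,k_i)<\eta$, setting $k_0:=p$ and $k_\ell:=q$. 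The sequence $(k_0,\dots,k_\ell)\subset K$ is then an $\varepsilon$-pseudo-orbit from $p$ to $q$, because $\dd(f(k_i),k_{i+1})\leq \dd(f(k_i),f(z_i))+\dd(z_{i+1},k_{i+1})<\varepsilon$.

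For sufficiency, it suffices to show that a generic $f$ satisfies: for every chain-transitive invariant compact set $K$ and every $\delta>0$, there is a periodic orbit within Hausdorff distance $\delta$ of $K$. Fixing such $K$ and $\delta$, cover $K$ by finitely many balls $\B(x_i,\delta/4)$ with $x_i\in K$ (for $i=1,\dots,N$), and let $W$ be the open $\delta$-neighborhood of $K$. Chain-transitivity of $K$, applied successively between consecutive centers $x_1,x_2,\dots,x_N,x_1$, produces, for every $\varepsilon>0$, a cyclic $\varepsilon$-pseudo-orbit contained in $K\subset W$ that meets every ball $\B(x_i,\delta/4)$. Any periodic orbit sharing these two properties (lying in $W$ and meeting each ball) is automatically $\delta$-dense in $K$ and within Hausdorff distance $\delta$ of it.

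To make the construction generic in $f$, one considers, for each finite tuple $\mathbf U=(U_1,\dots,U_k)$ of open balls with rational centers and radii and each finite rational union of balls $W$, the property $\cP_{\mathbf U,W}$: \emph{whenever cyclic $\varepsilon$-pseudo-orbits in $W$ visiting $U_1,\dots,U_k$ in order exist for every $\varepsilon>0$, there is also a true periodic orbit of $f$ enjoying the same property}. The existence of such a periodic orbit is an open condition (by the implicit function theorem, combined with the generic hyperbolicity/ellipticity of periodic orbits from theorem~\ref{t.periodic-generic}). Baire's theorem then yields a residual set satisfying $\cP_{\mathbf U,W}$ for every rational $(\mathbf U,W)$, provided each $\cP_{\mathbf U,W}$ is individually generic; applying this to the coverings from the previous paragraph at scales $\delta=1/m$ delivers the theorem.

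The principal obstacle is the individual genericity of $\cP_{\mathbf U,W}$. Theorem~\ref{t.pseudo-connecting} only connects two points $p$ and $q$, whereas here one must simultaneously close a cyclic pseudo-orbit into a single periodic orbit \emph{and} guarantee that the produced orbit remains in $W$. This requires a global refinement of the connecting lemma for pseudo-orbits, proved in~\cite{crovisier}: starting from a cyclic $\varepsilon$-pseudo-orbit $(z_0,\dots,z_\ell=z_0)$ in $W$ visiting $\mathbf U$, one iteratively applies Hayashi-type local perturbations with disjoint supports chosen inside $W$ along the pseudo-orbit, correcting all $\ell$ jumps. Although $\ell\to\infty$ as $\varepsilon\to 0$, the total $C^1$-size of the perturbation remains under control because the individual supports are pairwise disjoint.
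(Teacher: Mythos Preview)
The paper does not actually prove this theorem: it states the result and refers to~\cite{crovisier} for ``another global perturbation lemma that we won't detail here''. So there is no proof in the text to compare against; both you and the paper ultimately defer the crucial perturbation step to the same external reference.

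Your sketch is essentially correct and follows the expected route. The necessity argument is fine. For sufficiency, your Baire reduction works: the hypothesis ``for every $\varepsilon>0$ there is a cyclic $\varepsilon$-pseudo-orbit in $W$ visiting $\mathbf U$'' is in fact a \emph{closed} condition on $f$ (an $\varepsilon/2$-pseudo-orbit for $f_n$ is an $\varepsilon$-pseudo-orbit for $f$ once $\|f-f_n\|_{C^0}<\varepsilon/2$), so each $\cP_{\mathbf U,W}$ defines an open set of diffeomorphisms, and density is what the perturbation lemma provides.

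One point in your final paragraph deserves sharpening. Producing \emph{some} periodic orbit from a cyclic pseudo-orbit is already a consequence of theorem~\ref{t.pseudo-connecting2} (apply it with $p=q=z_0$). The genuine new content of~\cite{crovisier} is the localisation: ensuring the resulting periodic orbit stays inside $W$ \emph{and still visits every $U_i$}. In the machinery of part~\ref{s.global}, the selection and shortcut steps systematically shorten the pseudo-orbit, and nothing a priori prevents them from excising precisely the passages through some $U_i$. Saying that one ``chooses supports inside $W$'' does not by itself address this; controlling which pieces of the pseudo-orbit survive the shortcuts is the substance of the refined lemma you cite.
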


As a consequence, we get a weak shadowing property: pseudo-orbits whose jumps are very small ``can not be distinguished"
from ``true orbits".

\begin{coro}
For any generic diffeomorphism $f\in\diff^1_v$, for any $\delta>0$, there exists $\varepsilon>0$ such that any $\varepsilon$-pseudo-orbit
$PO=\{z_0,\dots,z_m\}$ of $f$ is $\delta$-close to a genuine segment of orbit
$O=\{z,f(z),\dots,f^n(z)\}$ of $f$: this means that $PO$ is contained in the $\delta$-neighborhood of $O$ and that
$O$ is contained in the $\delta$-neighborhood of $PO$.
\end{coro}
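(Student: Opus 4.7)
The plan is a compactness argument in the Hausdorff metric combined with the preceding theorem of Crovisier. I would argue by contradiction: suppose there exist $\delta>0$ and, for each $n$, an $\varepsilon_n$-pseudo-orbit $PO_n=(z_0^{(n)},\dots,z_{m_n}^{(n)})$ with $\varepsilon_n\to 0$, such that no segment of orbit of $f$ is $\delta$-close in Hausdorff distance to $PO_n$. If the lengths $m_n$ were bounded, a subsequence with constant length would converge componentwise (by continuity of $f$ and $\varepsilon_n\to 0$) to a genuine orbit segment, a contradiction. So one may assume $m_n\to\infty$, and by Hausdorff-compactness of the hyperspace of closed subsets of $M$, extract a further subsequence along which $\overline{PO_n}$ converges to a non-empty compact set $K\subset M$.

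Next, I would verify that $K$ is an $f$-invariant, chain-transitive compact set. Invariance is immediate: any $x\in K$ is a limit of interior points $z^{(n_k)}_{j_k}$ with $0<j_k<m_{n_k}$ (using $m_n\to\infty$), and $z^{(n_k)}_{j_k+1}$, being $\varepsilon_{n_k}$-close to $f(z^{(n_k)}_{j_k})$, converges to $f(x)$; the symmetric argument gives $f^{-1}(x)\in K$. For chain-transitivity, given $p,q\in K$ and $\eta>0$, pick $p_n=z^{(n)}_{i_n}$ and $q_n=z^{(n)}_{j_n}$ in $PO_n$ converging to $p$ and $q$ respectively; when $i_n<j_n$ along a subsequence, project the intermediate points $z^{(n)}_{i_n+1},\dots,z^{(n)}_{j_n-1}$ onto $K$ via nearest-point projection, then prepend $p$ and append $q$. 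Using uniform continuity of $f$ and the triangle inequality, this is an $\eta$-pseudo-orbit in $K$ from $p$ to $q$. When the order is reversed on some indices, the missing direction must be supplied by combining this construction with the pseudo-orbit connections provided by Proposition~\ref{p.chainrecurrence} and the conservative hypothesis.

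Granted that $K$ is chain-transitive, the preceding Crovisier theorem applies: $K$ is the Hausdorff limit of a sequence of periodic orbits, so there exists a periodic orbit $\cO$ with $d_H(\cO,K)<\delta/2$. For $n$ sufficiently large, $d_H(PO_n,K)<\delta/2$, whence $d_H(PO_n,\cO)<\delta$. A periodic orbit, traversed along one period, is a genuine segment of orbit, contradicting the defining property of $PO_n$.

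The main obstacle is establishing the chain-transitivity of $K$. A pseudo-orbit has an intrinsic orientation, so the Hausdorff limit naturally supports pseudo-orbit connections only along that direction, while chain-transitivity demands pseudo-orbits both ways between every pair of points. Closing this gap requires the conservative structure of $f$ via Proposition~\ref{p.chainrecurrence}, which guarantees pseudo-orbits in $M$ between any two points; the delicate technical point is to arrange that the backward connection can actually be realized inside $K$ itself after passing to the Hausdorff limit.
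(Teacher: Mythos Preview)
Your overall strategy—pass to a Hausdorff limit $K$ and invoke the preceding theorem—is the natural one, but both properties you need for $K$ genuinely fail, and not merely for the technical reasons you anticipate.

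First, invariance: the assertion that every $x\in K$ is a limit of \emph{interior} pseudo-orbit points does not follow from $m_n\to\infty$. If $z_0^{(n)}\to a$ and no other $z_j^{(n)}$ ever approaches $a$, then $a\in K$ while $f^{-1}(a)$ need not lie in $K$.

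More seriously, here is a concrete obstruction to chain-transitivity that your appeal to Proposition~\ref{p.chainrecurrence} cannot remove. Let $p$ be a hyperbolic saddle of $f$, take $a\in W^s(p)\setminus\{p\}$ and $b\in W^u(p)\setminus\{p\}$ on distinct orbits, chosen so that $f^{-1}(a)$ and $f(b)$ stay away from $\overline{O^+(a)\cup O^-(b)}$. With $N_n\to\infty$ the sequences
\[
PO_n=\bigl(a,\,f(a),\dots,f^{N_n}(a),\;f^{-N_n}(b),\dots,f^{-1}(b),\,b\bigr)
\]
are $\varepsilon_n$-pseudo-orbits (a single jump near $p$) with $\varepsilon_n\to 0$, and $PO_n\to K=\overline{O^+(a)}\cup\overline{O^-(b)}$ in the Hausdorff metric. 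This $K$ is not invariant ($f^{-1}(a)\notin K$) and not chain-transitive: from $b$ one cannot make even a single $\eta$-step inside $K$, since $f(b)\notin K$ and $K\setminus\{p\}$ is a discrete countable set bounded away from $f(b)$. By the ``only if'' direction of the theorem, $K$ is therefore \emph{not} a Hausdorff limit of periodic orbits, so your final step cannot produce a periodic orbit $\delta/2$-close to $K$. Proposition~\ref{p.chainrecurrence} does not help: it supplies $\varepsilon$-pseudo-orbits in $M$, whereas chain-transitivity demands them inside $K$, and the example shows none exist there.

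The corollary is nonetheless true for these $PO_n$: the shadowing orbit is not a periodic orbit but a genuine (non-periodic) segment passing near $a$, then near $p$, then near $b$, whose existence for generic $f$ relies on the homoclinic structure around $p$. Extracting such a segment in general requires the full machinery of~\cite{crovisier} rather than only the statement about chain-transitive sets quoted here; one has to analyse the ``chain'' structure of $K$ (a finite succession of chain-transitive pieces joined by heteroclinic orbit segments) and produce a single orbit segment threading through all of them.
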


\subsection{Hyperbolic versus elliptic dynamics}
Whereas the hyperbolic saddles of a generic diffeomorphism always exists and are dense in $M$,
the same is not true for the elliptic points.

\subsubsectionruninhead{Hyperbolic behavior: Anosov diffeomorphisms.}
An important example of conservative surface dynamics
are the {\em Anosov diffeomorphisms}: these are the diffeomorphisms such that the whole manifold $M$ is
a hyperbolic set.

For example, any linear automorphism in $SL(2,\ZZ)$ acts on the two-torus $\TT^2=\RR^2/\ZZ^2$
and preserves the canonical Haar measure. When the modulus of the eigenvalues is different from $1$,
one gets an Anosov diffeomorphism.

The general Anosov diffeomorphisms are important because they form an open subset of $\diff_v^k$.
Their dynamics are well understood since Franks has shown~\cite{franks1} that any Anosov surface diffeomorphism
is conjugate by a homeomorphism to a linear Anosov automorphism of $\TT^2$.

\subsubsectionruninhead{The dynamics far away from the Anosov diffeomorphisms.}
It is clear that Anosov diffeomorphisms don't have any elliptic periodic orbit.
For the other diffeomorphisms, Newhouse showed~\cite{newhouse} that the situation is completely different.
\begin{theo}[Newhouse]
For any generic diffeomorphism in $\diff^1_v$, which is not Anosov, the elliptic periodic points
are dense in $M$.
\end{theo}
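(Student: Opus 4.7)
The plan is to reduce the statement to the following local perturbation lemma and then invoke Baire: for every $f\in\diff^1_v$ which is not Anosov and every nonempty open set $U\subset M$, there exists a $C^1$-small perturbation $g\in\diff^1_v$ of $f$ having an elliptic periodic orbit inside $U$. Granted this, let $(U_n)$ be a countable basis of open sets of $M$. By the implicit function theorem the set $\cE_n\subset\diff^1_v$ of diffeomorphisms having an elliptic periodic orbit in $U_n$ is open, and the set $\cA$ of Anosov diffeomorphisms is open as well. The lemma forces $\cA\cup\cE_n$ to be open and dense, hence $\bigcap_n(\cA\cup\cE_n)=\cA\cup\bigcap_n\cE_n$ is a dense $G_\delta$; any non-Anosov element of this generic set has elliptic orbits dense in $M$.

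To prove the perturbation lemma I first apply the previous $C^1$-generic results to assume that every periodic orbit of $f$ is elliptic or hyperbolic, that saddles are dense in $M$, that $M$ is a single homoclinic class, and that $f$ is transitive. Fix a saddle $p\in U$. The heart of the argument is a Ma\~n\'e-type dichotomy for the family of derivative cocycles along the periodic saddles homoclinically related to $p$. Either this family carries a uniform dominated splitting; then by density of those saddles in $M$ it extends continuously to a $\D f$-invariant dominated splitting $TM=E\oplus F$ on the whole manifold. On a two-dimensional volume-preserving system such a splitting is automatically uniformly hyperbolic: the constraint $|\det \D f|=1$ together with the uniform domination forces $E$ to be uniformly contracted and $F$ uniformly expanded. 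Thus $f$ would be Anosov, contrary to hypothesis. Otherwise, for every $\varepsilon>0$ there exists a saddle $q$ in the homoclinic class of $p$ whose derivative cocycle $(\D_q f,\dots,\D_{f^{\tau-1}(q)}f)$ admits no dominated splitting at scale $\varepsilon$.

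In the second case I would transport this weakness into $U$. Using the connecting lemma twice, in the ``region-first'' variant employed in the proof of Theorem~\ref{t.horseshoe}, one creates a transverse heteroclinic cycle between $p$ and $q$; the $\lambda$-lemma then yields, inside any prescribed neighborhood of $p$ (in particular inside $U$), a saddle $q'$ of very large period whose orbit shadows a long segment of the orbit of $q$. The cocycle of $\D f$ along $q'$ is, up to bounded transition factors, a high iterate of the cocycle along $q$, and so it still fails to have a dominated splitting at scale $\varepsilon$. A classical linear algebra lemma of Ma\~n\'e asserts that a finite sequence of matrices in $SL(2,\RR)$ without dominated splitting can be perturbed in $C^0$-norm into a sequence whose product has eigenvalues on the unit circle. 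Franks' lemma (Theorem~\ref{t.franks}) then realizes this purely linear perturbation as the derivative of an actual $C^1$-small, volume-preserving perturbation $g$ of $f$, for which $q'$ becomes an elliptic periodic orbit inside $U$.

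The main obstacle is the conservative surface form of the Ma\~n\'e dichotomy and the transport of non-domination to a saddle inside $U$. Both the two-dimensional hypothesis and the volume preservation are essential: the former ensures one-dimensional factors on which the $SL(2,\RR)$ rotation trick applies, and the latter is what turns a dominated splitting into uniform hyperbolicity. In higher dimension or in the dissipative setting, weak hyperbolicity does not automatically yield elliptic periodic orbits, and a substantially more delicate analysis is needed.
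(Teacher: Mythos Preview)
The paper does not actually prove Newhouse's theorem; it only states it with a reference to~\cite{newhouse} and then remarks that the techniques of~\cite{bonatti-diaz-pujals} combined with~\cite{BC} yield a strengthening (elliptic orbits that are $\varepsilon$-dense). So there is no proof in the text to compare against directly.

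Your proposal follows exactly the Bonatti--D\'iaz--Pujals route the paper alludes to, and the overall architecture is sound: the Baire reduction is correct (including the identity $\bigcap_n(\cA\cup\cE_n)=\cA\cup\bigcap_n\cE_n$), the fact that a global dominated splitting on a conservative surface forces Anosov is correct, and the Ma\~n\'e dichotomy together with Franks' lemma is the right engine. A few points deserve tightening. First, once you have assumed the generic property that all saddles are pairwise homoclinically related (this is exactly what the paper states as a consequence of the connecting lemma for pseudo-orbits), invoking the connecting lemma again to link $p$ and $q$ is redundant; the heteroclinic cycle already exists. Second, the transport step---showing that the shadowing saddle $q'$ inherits the failure of $\ell$-domination from $q$---needs the observation that when $q'$ tracks the orbit of $q$ for a time much longer than $\ell$, the invariant directions of $q'$ along that stretch are $C^0$-close to those of $q$ (an inclination-lemma estimate), so the domination inequality fails at the same place. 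You flag this as the main obstacle, which is honest, but the sentence ``up to bounded transition factors, a high iterate of the cocycle along $q$'' does not by itself justify it. Third, Ma\~n\'e's linear lemma puts the eigenvalues of the product on the unit circle, which in $SL(2,\RR)$ allows the degenerate cases $\pm 1$; one further arbitrarily small perturbation is needed to make the orbit genuinely elliptic.

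For context, Newhouse's original 1977 argument is more elementary and specifically two-dimensional: rather than the abstract dominated-splitting dichotomy, he exploits directly that non-Anosov implies the existence of points where the stable and unstable directions make an arbitrarily small angle, finds saddles nearby whose invariant directions inherit this small angle, and uses Franks' lemma to rotate one onto the other. Your BDP formulation is the modern, higher-dimensional-ready version of the same mechanism.
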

In particular, if $M$ is not the torus, we get the same conclusion for any generic diffeomorphism in $\diff^1_v$.
Combining the technics of~\cite{bonatti-diaz-pujals} and \cite{BC}, one can improve this result and show that
for any generic diffeomorphism which is not Anosov, for any $\varepsilon>0$, there exists an elliptic periodic orbit
which is $\varepsilon$-dense in $M$.

We note also that if $f$ is an Anosov diffeomorphism, any diffeomorphism $g$ that is $C^1$-close to $f$ is transitive
($f$ is said {\em robustly transitive}).
This property characterizes the Anosov diffeomorphisms.

\begin{prop}
For any diffeomorphism $f\in\diff^1_v$ which is not Anosov, there is a $C^1$-small perturbation $g$
which is not transitive.
\end{prop}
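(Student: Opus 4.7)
The plan is to $C^1$-perturb $f$ into a $C^\infty$ conservative diffeomorphism carrying an elliptic periodic orbit that satisfies the hypotheses of KAM theory; the resulting invariant topological disk will then forbid transitivity.

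First I would produce an elliptic orbit nearby. Since $f$ is not Anosov and the Anosov property is $C^1$-open, one may apply Newhouse's theorem within the closed (hence Baire) subspace of non-Anosov conservative diffeomorphisms: this yields $f_1$, arbitrarily $C^1$-close to $f$, still non-Anosov, and possessing an elliptic periodic orbit of some period $\tau$. Using Zehnder's approximation (Remark~\ref{r.smooth}), I would next replace $f_1$ by a $C^\infty$ volume-preserving diffeomorphism $f_2$ that remains $C^1$-close to $f$; ellipticity being a $C^1$-open condition on the $1$-jet at the periodic orbit, $f_2$ still carries an elliptic periodic orbit of period $\tau$.

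Next I would arrange the KAM hypotheses at this orbit. Franks's perturbation lemma (Theorem~\ref{t.franks}) permits an arbitrarily $C^1$-small modification of the derivative along the periodic orbit, so that the rotation number of the linearized return map becomes Diophantine. A further smooth perturbation, supported in a shrinking neighborhood of the orbit and with $C^1$-norm as small as desired, adjusts the $3$-jet of the return map so that the first Birkhoff twist coefficient is non-zero. Call $g$ the resulting $C^\infty$ conservative diffeomorphism, still $C^1$-close to $f$. Moser's twist theorem (the mechanism behind Herman's Theorem~\ref{t.KAM}) then produces a closed topological disk $D$ containing the periodic orbit in its interior with $g^\tau(D)=D$; the finite union $D\cup g(D)\cup\cdots\cup g^{\tau-1}(D)$ is a non-empty proper $g$-invariant open subset of $M$, so $g$ admits no dense forward orbit and is not transitive.

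The main obstacle lies in the third step: realizing the Diophantine condition on the rotation number \emph{and} the non-degeneracy of the Birkhoff twist by arbitrarily $C^1$-small perturbations of the smooth $f_2$. Franks's lemma acts freely on the $1$-jet along the orbit, so the Diophantine condition follows from density in the circle. The twist coefficient, on the other hand, depends on the $3$-jet of the return map at the periodic point, and the perturbation realizing it must be constructed with shrinking support in order to keep the $C^1$-norm under control while prescribing the required higher-order data.
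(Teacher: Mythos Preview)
Your proposal is correct and follows the same route as the paper: perturb (via Newhouse) to obtain an elliptic periodic orbit, smooth via Zehnder, then arrange the KAM hypotheses so that an invariant disk appears and kills transitivity. The paper compresses all of this into two sentences, whereas you spell out the Diophantine and twist conditions and how to realize them; this extra detail is fine and does not change the strategy.

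One minor wording issue: invoking Newhouse's theorem ``within the closed subspace of non-Anosov diffeomorphisms'' is slightly imprecise, since a residual subset of $\diff^1_v$ need not restrict to a residual subset of an arbitrary closed subspace. What you actually need (and what underlies Newhouse's statement) is the perturbation result itself: any non-Anosov $f\in\diff^1_v$ can be $C^1$-approximated by a conservative diffeomorphism with an elliptic periodic point. The paper uses this implicitly as well.
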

\begin{proof}
Using remark~\ref{r.smooth}, one can approach $f$ by a smooth diffeomorphism $\bar f$ which has an elliptic periodic orbit.
For a new perturbation $g$, the assumptions of KAM theorem~\ref{t.KAM} are satisfied and $g$ is not transitive.
\end{proof}

\subsubsectionruninhead{The Lyapunov exponents.}
The dichotomy hyperbolic/elliptic can be also detected from the Lyapunov exponents of the diffeomorphism:
Oseledets theorem asserts that for any diffeomorphism $f\in\diff^1_v$ and at $v$-almost any point,
the {\em upper Lyapunov exponent} exists:
$$\lambda^+(f,x)=\lim_{n\rightarrow +\infty} \frac{1}{n}\log\|\D_x f^n\|.$$
This quantity, which is always non-negative, describes how the infinitesimal dynamics along the orbits of $x$
is stretched.

Bochi proved~\cite{bochi} the following property.
\begin{theo}[Bochi]
For any generic diffeomorphism $f\in\diff^1_v$ two cases can occur:
\begin{itemize}
\item either $f$ is Anosov, (and $\lambda^+(f,x)$ is strictly positive at any point $x$ where it is defined);
\item or $\lambda^+(f,x)=0$ for $v$-almost every point $x$.
\end{itemize}
\end{theo}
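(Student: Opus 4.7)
The key analytic fact is that the functional
\[
\Phi\colon f\mapsto \int_M \lambda^+(f,x)\,\dd v(x)
\]
is upper semicontinuous on $\diff^1_v$. Indeed, since $f$ preserves the probability $v$, Kingman's subadditive ergodic theorem gives $\Phi(f)=\inf_{n\geq 1}\tfrac{1}{n}\int_M\log\|\D_x f^n\|\,\dd v(x)$, so $\Phi$ is the pointwise infimum of a sequence of functions that depend continuously on $f$ in the $C^1$-topology. The continuity points of an upper semicontinuous function on a Baire space form a residual subset $\cR_0\subset \diff^1_v$; intersect it with the residual sets furnished by Theorems~\ref{t.periodic-generic}, \ref{t.coro-closing} and \ref{t.transitivite} to obtain a residual $\cR$.

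Fix $f\in\cR$. If $f$ lies in the open set of Anosov diffeomorphisms, then the continuous $\D f$-invariant splitting $TM=E^s\oplus E^u$ exists with uniform expansion on $E^u$, and $\lambda^+(f,x)$ is bounded below by the expansion rate wherever it is defined, giving the first alternative. So assume $f\in\cR$ is not Anosov; I will argue that $\Phi(f)=0$, which, since $\lambda^+\geq 0$, forces $\lambda^+(f,x)=0$ for $v$-almost every $x$. By continuity of $\Phi$ at $f$, it suffices to exhibit, in every $C^1$-neighbourhood of $f$, a diffeomorphism $g\in\diff^1_v$ with $\Phi(g)$ arbitrarily small.

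The construction of such $g$ is the heart of the argument. Along an orbit segment $\{x,f(x),\dots,f^{N-1}(x)\}$ on which the tangent cocycle admits no uniform dominated splitting, Bochi's elementary rotation lemma produces an arbitrarily $C^0$-small, area-preserving perturbation of the finite sequence $(\D_{f^i(x)}f)_{0\leq i<N}$ whose composition has norm close to $1$; Franks' lemma (Theorem~\ref{t.franks}) then realises this cocycle perturbation as a genuine $C^1$-small conservative perturbation of $f$ supported near the orbit segment. Choosing a finite family of such segments whose union has $v$-measure arbitrarily close to $1$, available by Birkhoff's theorem applied to the subadditive cocycle $\log\|\D f^n\|$ together with Ma\~n\'e's ergodic closing lemma to reach them via periodic approximations, and performing the perturbations simultaneously, drives $\Phi$ below any prescribed $\varepsilon>0$.

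The main obstacle is supplying the input needed for Bochi's rotation lemma, namely that when $f\in\cR$ is not Anosov the cocycle $\D f$ fails to admit a uniform dominated splitting on a set of positive $v$-measure. The crucial ingredient is specific to the conservative surface setting: any dominated splitting $TM=E\oplus F$ defined on a full-measure invariant set is automatically uniformly hyperbolic, because area preservation forces $\|\D f|_E\|\cdot\|\D f|_F\|=1$ and domination then both contracts $E$ and expands $F$ uniformly. Combined with transitivity and density of hyperbolic saddles granted by $f\in\cR$, this excludes an intermediate regime, so Bochi's mechanism is available on a set of nearly full measure. The subsequent telescoping control of many small rotations, which relies on the invariance of the angle between the Oseledets directions under the cocycle to ensure that individually tiny $C^0$ rotations have a substantial cumulative effect on the integrated exponent, is the genuine technical difficulty of Bochi's original proof.
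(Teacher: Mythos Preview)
The paper does not prove this theorem; it is quoted from Bochi~\cite{bochi} with only the one-sentence remark that the proof ``combines perturbation technics of the derivative along some segments of orbits and a control of the measure of the points which exhibit the required behavior.'' Your outline is faithful to Bochi's original strategy and consistent with that description: upper semicontinuity of the integrated top exponent, genericity of its continuity points, and the perturbation mechanism that kills the exponent in the absence of domination are exactly the ingredients.

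Two remarks on the details. First, the appeal to Ma\~n\'e's ergodic closing lemma is a detour: Bochi's construction does not pass through periodic orbits. One works directly with long orbit segments of typical points (obtained via a Kakutani--Rokhlin tower) that together carry nearly full $v$-measure, and performs the rotation/Franks perturbations along those segments; no closing is involved. Second, you invoke transitivity and density of hyperbolic saddles to rule out an ``intermediate regime,'' but neither is needed. The relevant fact is purely about the cocycle: if the Oseledets splitting is dominated over an invariant set of full $v$-measure, it extends by continuity of the angle to a dominated splitting over all of $M$ (since $v$ has full support), and in the area-preserving two-dimensional case a dominated splitting over $M$ is automatically uniformly hyperbolic --- so $f$ is Anosov. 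The contrapositive furnishes the non-domination hypothesis of the rotation lemma on a set of nearly full measure, without any recourse to Theorems~\ref{t.coro-closing} or~\ref{t.transitivite}.
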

The proof is very interesting since it combines perturbation technics of the derivative along some
segments of orbits and a control of the measure of the points which exhibit the required behavior.

\subsection{Questions}
We conclude this survey with some open problems.
\begin{enumerate}
\item {\em Is any generic diffeomorphism topologically mixing?}\\
A diffeomorphism $f$ is {\em topologically mixing} if
for any non-empty open sets $U$ and $V$, there is an integer $n_0\geq 1$
such that $f^n(U)$ intersects $V$ once $n\geq n_0$. It is a stronger property than the transitivity.
In fact, theorem~\ref{t.pseudo-connecting} implies that
for a generic diffeomorphism, the manifold splits into a finite number of pieces
$\Lambda$, $f(\Lambda)$,\dots, $f^\tau(\Lambda)=\Lambda$ that are cyclically permuted by the dynamics.
The return map $f^\tau$ on $\Lambda$ is topologically mixing. The problem here is to decide whether
there is only one piece ($\tau=1$).
\item {\em Is any generic diffeomorphism ergodic?}\\
A weaker problem would be: is there a full measure set of point whose orbits are dense?
\item {\em Is there a closing lemma in higher differentiability?}\\
About perturbation lemmas on surfaces and in higher differentiability, there exist some partial results that adopt a topological approach,
see~\cite{robinson2,mather,pixton,oliveira,franks-lecalvez}.
\end{enumerate}
\vspace{1cm}

\newpage
\section{Local perturbations in $C^1$-dynamics}\label{s.local}
In this part we explain the main ideas for closing or connecting orbits by perturbation.
Most of the technical details that are skipped here can be read in~\cite{pugh-robinson,arnaud1,wen-xia0,BC}.

\subsection{The elementary perturbation lemma}\label{s.elementary}
\subsubsectionruninhead{Statement.}
The basic result that allows perturbations is the elementary perturbation lemma
introduced at section~\ref{ss.elementary}. Since the perturbation is local, we state it in $\RR^2$
without loosing in generality (see figure~\ref{f.elementary}).
\begin{prop}[Elementary perturbation lemma]\label{p.elementary}
For any neighborhood $\cU$ of $\id$ in the space of conservative $C^1$-diffeomorphisms of $\RR^2$,
there exists $\eta>0$ with the following property:\\
If $x,y\in \RR^2$ are close enough to each other there is $\varphi\in \cU$
which sends $x$ onto $y$ and coincides with $\id$ outside the ball $\B\left(\frac{x+y}2,\frac{1+\eta}2\dd(x,y)\right)$.
\end{prop}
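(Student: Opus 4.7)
I build $\varphi$ as the time-$1$ map of a compactly supported Hamiltonian vector field on $\RR^2$, so that area-preservation is automatic. After composition with an isometry of $\RR^2$ (which conjugates the $C^1$-norm trivially and transports the support ball onto itself), I assume $d := d(x,y) > 0$ and
$$ x = -\tfrac{d}{2}\, e_1, \qquad y = \tfrac{d}{2}\, e_1,$$
so the target ball becomes $B_d := \B\bigl(0,\tfrac{1+\eta}{2}d\bigr)$.

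Next I fix a smooth cutoff $\chi \colon \RR^2 \to [0,1]$, obtained by rescaling a single model bump, that equals $1$ on $\B\bigl(0,\tfrac{d}{2}+\tfrac{\eta d}{4}\bigr)$ and vanishes outside $B_d$. This gives the universal estimates $\|\nabla\chi\|_{C^0} \leq C/(\eta d)$ and $\|D^2\chi\|_{C^0} \leq C/(\eta d)^2$. I then set
$$ H(x_1,x_2) := d\, x_2\, \chi(x_1,x_2) $$
and let $\varphi$ be the time-$1$ map of the Hamiltonian vector field $X_H = (\partial_{x_2}H,-\partial_{x_1}H)$. Since $X_H$ is supported in $B_d$, the flow fixes every point outside $B_d$ for all times, and preserves the standard area form.

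The verification that $\varphi(x)=y$ is immediate. On the segment $[x,y]$, which lies in the interior of the region $\{\chi\equiv 1\}$, the gradient of $\chi$ vanishes and hence $X_H \equiv d\, e_1$. The curve $t \mapsto x + td\, e_1$ is therefore an integral curve of $X_H$ that stays on $[x,y]$ for $t\in[0,1]$ and reaches $y$ at $t=1$; uniqueness of solutions of the ODE gives $\varphi(x)=y$.

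The delicate point is the $C^1$-estimate, which is where the slack $\eta$ is exactly consumed. A direct differentiation, using the bounds on $\chi$ and $|x_2|\leq \tfrac{1+\eta}{2}d$ on the support, yields $\|X_H\|_{C^0} \leq C_1 d$ and $\|DX_H\|_{C^0} \leq C_2/\eta$ for $\eta \geq 1$, with $C_1,C_2$ universal (the factor $d$ in $H$ cancels the factor $1/d$ from each derivative of $\chi$). Integrating the variational equation $\dot A(t) = DX_H(\varphi_t)\cdot A(t)$ with $A(0)=I$ and applying Gr\"onwall gives $\|D\varphi - \id\|_{C^0} \leq e^{C_2/\eta}-1$ and $\|\varphi-\id\|_{C^0} \leq C_1 d\, e^{C_2/\eta}$. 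Given $\cU$, I first choose $\eta$ large enough that $e^{C_2/\eta}-1$ is smaller than the $C^1$-radius of a ball around $\id$ contained in $\cU$, and then take $d(x,y)$ small enough to absorb the $C^0$-part. The main obstacle is precisely this trade-off: the $C^1$-size of $DX_H$ is governed by $1/\eta$ independently of $d$, so one cannot take the support arbitrarily tight around the segment $[x,y]$ — the parameter $\eta$ must be allowed to grow as $\cU$ shrinks, recovering quantitatively the $\varepsilon^{-1}d(x,y)$ phenomenon discussed before the lemma.
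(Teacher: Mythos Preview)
Your proof is correct. The paper itself does not prove this proposition: it is stated as the basic input for everything that follows, with technical details deferred to the references. Your Hamiltonian-flow construction is the standard and natural way to produce such compactly supported area-preserving perturbations, and your bookkeeping is accurate: on the segment $[x,y]$ one has $x_2=0$ and $\nabla\chi=0$, so $X_H=d\,e_1$ there and the time-$1$ map indeed carries $x$ to $y$; the derivative bounds $\|DX_H\|_{C^0}\le C_2/\eta$ (for $\eta\ge 1$) follow from the scaling of $\chi$ and the factor $|x_2|\le\tfrac{1+\eta}{2}d$ on the support, and Gr\"onwall then gives the required $C^1$-smallness.

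Two minor remarks. First, the $C^0$-bound can be obtained more simply as $\|\varphi-\id\|_{C^0}\le\|X_H\|_{C^0}\le C_1 d$, without the extra $e^{C_2/\eta}$ factor (just integrate $|\dot\varphi_t|$ over $t\in[0,1]$). Second, your restriction to $\eta\ge1$ is harmless here since the proposition lets you take $\eta$ as large as you wish, and indeed the whole point of the subsequent sections is that $\eta$ is \emph{large} and must be tamed by spreading the perturbation in time.
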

One gets the perturbation $g$ of $f$ announced at section~\ref{ss.elementary}, as a composition $\varphi\circ f$.
\begin{figure}[ht]
\begin{center}
\input{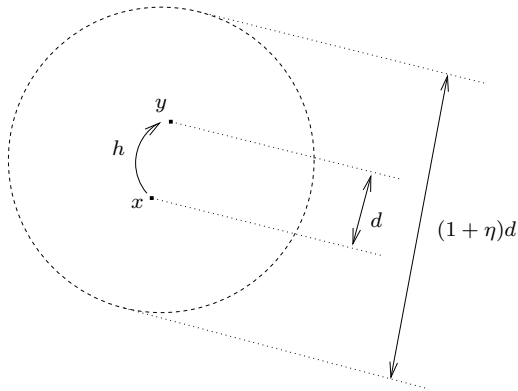}
\end{center}
\caption{An elementary perturbation. \label{f.elementary}}
\end{figure}

\subsubsectionruninhead{Why this result is not sufficient to our purpose?}
One can hope that this result will be sufficient to get the closing lemma:
if one considers a segment of orbit $(p,f(p),\dots,f^n(p))$ and if $p$ and $f^n(p)$ are
close enough one may want to introduce a perturbation $\varphi$ given by proposition~\ref{p.elementary}
which sends $f^n(p)$ on $p$. However the point $p$ will not be periodic for the diffeomorphism
$g=\varphi\circ f$ in general. The reason is that the segment of orbit $(p,f(p),\dots,f^n(p))$
could have many intermediate returns in the support of the perturbation $\varphi$ and could be broken
by the perturbation (figure~\ref{f.break}).
\begin{figure}[ht]
\begin{center}
\input{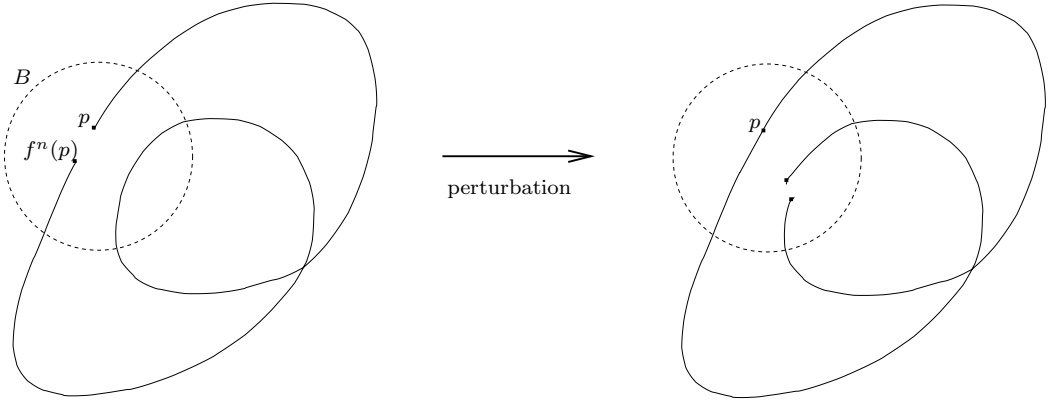}
\end{center}
\caption{Orbit broken by the perturbation. \label{f.break}}
\end{figure}
 
\subsubsectionruninhead{Proof of the closing lemma when $\eta=\frac1 2$.}\label{ss.proof-closing}
When $\eta\leq \frac 1 2$ (but in this case the $C^1$-size of the perturbation is not so small)
one can deduce the closing lemma directly from the elementary perturbation lemma.
The main difficulty is to select two close iterates that could be jointed to each other by a perturbation
(figure~\ref{f.selection}).
\begin{figure}[ht]
\begin{center}
\input{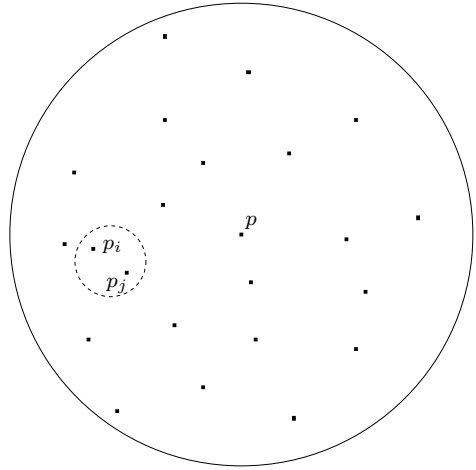}
\end{center}
\caption{Selection of returns in $B$. \label{f.selection}}
\end{figure}

Let us consider a point $p$ whose orbit is recurrent\footnote{The assumption in the closing lemma is
that $p$ is non-wandering. In order to simplify the proof,
our assumption here is a little bit stronger but the argument is the same.}:
the forward orbit of $p$ has some returns arbitrarily close to $p$.
We fix also a small ball $B=\B(p,r)$ centered at $p$, that will contain the support of the
perturbation. We choose a time $n\geq 1$ such that $f^n(p)$ and $p$ are close enough.

Let us denote by $p_0=p, p_1,\dots,p_{s-1}, p_s=f^n(p)$ the points of
$\{p,f(p),\dots,f^n(p)\}$ that belong to the interior of $B$.
Among the pairs $(p_i,p_j)$ with $i<j$, we choose one which minimizes the quantity
$$D_{i,j}=\frac{\dd (p_i,p_{j})}{\dd\left(\frac{p_i+p_{j}}2, \partial B\right)}.$$
This plays the role of a ``hyperbolic distance in the ball $B$" between the points $p_i$.
Note that since the distance between $x$ and $f^n(x)$ has been chosen arbitrarily small
in comparison to the radius $r$ of $B$, the minimum of the $D_{i,j}$ can be assumed arbitrarily small.

An easy estimate gives the following claim.
\begin{claim}
If $D_{i_0,j_0}$ is small enough and realizes the minimum over all the $D_{i,j}$,
the ball $\B\left(\frac{p_{i_0}+p_{j_0}} 2, \frac {1+\eta} 2 \dd (p_{i_0},p_{j_0})\right)$
is contained in $B$ and does not intersect any other point $p_0,\dots,p_s$.
\end{claim}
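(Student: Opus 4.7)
The plan is to prove the two assertions of the claim separately, both by direct geometric estimates that exploit the minimality of $D_{i_0,j_0}$ together with the hypothesis that this minimum is small. Write $m=\tfrac{1}{2}(p_{i_0}+p_{j_0})$, $d=\dd(p_{i_0},p_{j_0})$, and $D=D_{i_0,j_0}=d/\dd(m,\partial B)$.

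For containment in $B$, since $B=\B(p,r)$ is a Euclidean ball, the inclusion $\B\bigl(m,\tfrac{1+\eta}{2}d\bigr)\subset B$ is equivalent to $\tfrac{1+\eta}{2}d\le\dd(m,\partial B)$, i.e.\ to $\tfrac{1+\eta}{2}D\le 1$. This holds as soon as $D\le\tfrac{2}{1+\eta}$.

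For the second assertion, I would argue by contradiction: suppose that some $p_k$ with $k\notin\{i_0,j_0\}$ lies in $\B\bigl(m,\tfrac{1+\eta}{2}d\bigr)$. The parallelogram identity applied to the median from $p_k$ of the segment $[p_{i_0},p_{j_0}]$ gives
$$
\dd(p_{i_0},p_k)^2+\dd(p_{j_0},p_k)^2 \;=\; 2\dd(p_k,m)^2+\tfrac{d^2}{2} \;\le\; \tfrac{(1+\eta)^2+1}{2}\,d^2,
$$
so $\min\{\dd(p_{i_0},p_k),\dd(p_{j_0},p_k)\}\le\tfrac{1}{2}\sqrt{(1+\eta)^2+1}\,d$, which is strictly less than $d$ for $\eta<\sqrt 3-1$. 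Say this minimum equals $d'=\dd(p_{i_0},p_k)$, and set $m'=\tfrac{1}{2}(p_{i_0}+p_k)$. A direct computation gives $m'-m=\tfrac{1}{2}(p_k-p_{j_0})$, and since both $p_k$ and $p_{j_0}$ lie in $\B(m,\tfrac{1+\eta}{2}d)$, we get $\dd(m,m')\le\tfrac{1+\eta}{2}d$. Hence $\dd(m',\partial B)\ge\dd(m,\partial B)\bigl(1-\tfrac{(1+\eta)D}{2}\bigr)$, and combining both bounds yields
$$
D_{i_0,k}\;=\;\frac{d'}{\dd(m',\partial B)}\;\le\;\frac{\sqrt{(1+\eta)^2+1}}{2-(1+\eta)D}\cdot D,
$$
which is strictly less than $D$ once $D$ is small enough (depending only on $\eta$). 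This contradicts the choice of $(i_0,j_0)$ as minimizer.

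The main obstacle is that the minimality hypothesis compares the ratios $D_{i,j}$ and not the raw distances, so a bound on $d'$ via the parallelogram identity does not suffice by itself: replacing $(p_{i_0},p_{j_0})$ by $(p_{i_0},p_k)$ also shifts the midpoint, and hence perturbs the denominator. The key observation is that this shift is at most $(1+\eta)d/2$, which is negligible compared with $\dd(m,\partial B)=d/D$ precisely because $D$ is small; this lets the strict numerator gain $\tfrac{1}{2}\sqrt{(1+\eta)^2+1}<1$ survive in the ratio $D_{i_0,k}$.
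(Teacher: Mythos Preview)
Your proof is correct and fills in the details of what the paper only asserts as ``an easy estimate'' without further argument. The approach---bounding the containment in $B$ directly from the definition of $D$, and for the second part using the parallelogram identity to find a strictly closer pair among $\{p_{i_0},p_{j_0},p_k\}$ while controlling the shift of the midpoint---is exactly the kind of elementary computation the paper has in mind. One small remark: when you write ``say this minimum equals $d'=\dd(p_{i_0},p_k)$'', you are implicitly using that the argument is symmetric in $i_0,j_0$; it would be slightly cleaner to say so explicitly, and likewise to note that $D_{i,j}$ is symmetric so that the ordering constraint $i<j$ in the definition is immaterial.
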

In particular, one now can realize the perturbation $\varphi$ which sends $p_{j_0}$ onto $p_{i_0}$.
The segment of orbit $(p_{i_0},f(p_{i_0}),\dots, f^{-1}(p_{j_0}))$ is not perturbed by the perturbation and
the point $p_{i_0}$ is now periodic.
Hence, we get a periodic point close to the initial point $p$. A small perturbation
(one conjugates by a translation) can move the periodic point $p_{i_0}$ onto $p$, as required.

\subsubsectionruninhead{Composition of perturbations.}\label{ss.composition}
In the general case, a unique elementary perturbation $\varphi$ is not sufficient and we will
realize several perturbations at different places. We will now see that this is allowed, since the size of the perturbation
one obtains do not increase with the number of independent elementary perturbations that have been realized.

If $\cU$ is a neighborhood of a diffeomorphism $f$, the {\em support} of a perturbation $g\in \cU$ is the
set of points $x$ where $f(x)$ and $g(x)$ differ. If $g_1=\varphi_1\circ f$ and $g_2=\varphi_2\circ f$
are two perturbations with distinct supports, the {\em composition of the perturbations} is the diffeomorphism
$g=\varphi_1\circ \varphi_2\circ f=\varphi_2\circ \varphi_1\circ f$.
By shrinking the neighborhood $\cU$ if necessary, one can choose it with the {\em composition property}:
$$ g_1, g_2\in \cU \; \Rightarrow \; g\in \cU.$$
Hence, one may compose an arbitrary number of elementary perturbations without leaving the neighborhood $\cU$ of $f$.

\subsection{The closing lemma}
In order to prove the closing lemma, we would like to realize a small perturbation given by proposition~\ref{p.elementary}
with a constant $\eta\leq\frac 1 2$.
As we explained in section~\ref{ss.elementary} this is not possible.
The idea of Pugh was to spread the perturbation in the time and to obtain it as a composition of several elementary
perturbation. This idea allows roughly to divide the (a priori large) constant $\eta$, given by the size of the
perturbations that are allowed, by the time we consider to spread the perturbation.

\subsubsectionruninhead{Pugh's perturbation lemma.}
\begin{theo}\label{t.pugh}
Let $f\in\diff_v^1$ be a diffeomorphism and let us consider a point $z$ which is not periodic.
Then, for any neighborhood $\cU$ of $f$ in $\diff_v^1$,
there exists an integer $N$ and a Riemannian metric $\dd'$ which have the following property:

If $x,y$ are two points contained in a small ball $S=\B_{\dd'}(z,\delta)$ centered at $z$, for the metric $d'$,
there is a perturbation $g\in \cU$ of $f$ which sends $x$ onto $f^N(y)$ by $g^N$.

The support of the perturbation $g$ is contained in the union $\hat S\cup f(\hat S)\cup\dots\cup f^{N-1}(\hat S)$
of the ball $\widehat S = \B_{\dd'}(z,\frac{3}{2}\delta)$ with its $N-1$ first iterates.
\end{theo}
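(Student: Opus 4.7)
The plan is to spread the required perturbation over the $N$ iterates $z, f(z), \ldots, f^{N-1}(z)$, so that only an arbitrarily small elementary perturbation is needed at each step, with supports in different iterates of $\hat S$.

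First I would fix the geometry. Given $\cU$, choose a neighborhood $\cU'$ of $\id$ with the composition property of section~\ref{ss.composition}, small enough that $\psi\circ f\in\cU$ whenever $\psi\in\cU'$, and let $\eta>0$ be the constant associated to $\cU'$ by Proposition~\ref{p.elementary}. Pick an integer $N$ with $\frac{1+\eta}{N}\le\frac12$. Since $z$ is not periodic, the $N+1$ points $z,f(z),\ldots,f^N(z)$ are pairwise distinct, so for $\delta>0$ sufficiently small one can build volume-preserving coordinate charts around the $f^i(z)$ and a Riemannian metric $\dd'$ such that the balls $\hat S_i=\B_{\dd'}(f^i(z),\tfrac32\delta)$ are pairwise disjoint, each reads in its chart as a standard Euclidean ball around $0$, and $f$ acts between consecutive charts as a diffeomorphism arbitrarily close to an isometry. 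Set $\hat S=\hat S_0$ and $S=\B_{\dd'}(z,\delta)$.

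Given $x,y\in S$, I would interpolate them in the chart at $z$: put $y_k=x+\frac{k}{N}(y-x)$ for $k=0,\ldots,N$, so that $y_0=x$, $y_N=y$ and $\dd'(y_k,y_{k+1})\le\frac{2\delta}{N}$. For each $k=0,\ldots,N-1$, Proposition~\ref{p.elementary} applied in the chart at $f^{k+1}(z)$ yields a volume-preserving $\psi_k\in\cU'$ sending $f^{k+1}(y_k)$ to $f^{k+1}(y_{k+1})$ and supported, around their midpoint, in a ball of radius $\frac{1+\eta}{2}\cdot\dd'(f^{k+1}(y_k),f^{k+1}(y_{k+1}))\le\frac{(1+\eta)\delta}{N}\le\frac{\delta}{2}$. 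Since this midpoint lies within $\dd'$-distance $\delta$ of $f^{k+1}(z)$, the support fits inside $\hat S_{k+1}=f^{k+1}(\hat S)$, and the supports of the $\psi_k$ are thus contained in the pairwise disjoint $\hat S_1,\ldots,\hat S_N$.

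Finally set $g=\psi_{N-1}\circ\cdots\circ\psi_0\circ f$. The composition property gives $g\in\cU$, and $g$ preserves $v$ since each $\psi_k$ does; the support of $g$ is the $f$-preimage of $\bigcup_k\mathrm{supp}(\psi_k)$, hence contained in $\bigcup_{k=0}^{N-1}f^k(\hat S)$. A straightforward induction on $k$ — using that on $\hat S_{k+1}$ only the factor $\psi_k$ acts nontrivially — shows $g^{k+1}(x)=f^{k+1}(y_{k+1})$, whence $g^N(x)=f^N(y)$ as required.

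The hard part is the preparatory step: one needs uniform control on the distortion of $f$ and of the charts so that (i) the Euclidean support ball produced by Proposition~\ref{p.elementary} truly sits inside $\hat S_{k+1}$ rather than spilling onto another level, and (ii) the pulled-back $\psi_k$ belongs to $\cU'$ uniformly in $k$. The $C^1$-smoothness of $f$ together with the non-periodicity of $z$ supplies this uniformity: once $\delta$ is small enough, the $N+1$ levels are entirely decoupled and $f$ is arbitrarily close to a linear isometry between consecutive charts.
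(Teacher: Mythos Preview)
Your interpolation is exactly the paper's argument in the conformal case, but the reduction to that case via adapted charts at each $f^i(z)$ has a genuine gap --- precisely the point (ii) you flag at the end. To make $f$ look like an isometry between consecutive charts, the linear part of the chart $\psi_i$ at $f^i(z)$ is forced to be essentially $(\D_z f^i)^{-1}$. Proposition~\ref{p.elementary} then produces $\varphi$ with $\|\D\varphi-\id\|$ small \emph{in the chart}; but what you need on $M$ is the conjugate $\psi_i^{-1}\circ\varphi\circ\psi_i$, whose $C^1$-distance to $\id$ in the ambient metric is of order $\|\D\psi_i\|\cdot\|\D\psi_i^{-1}\|$ times the chart size of $\varphi$, i.e.\ multiplied by the eccentricity of $\D_z f^i$. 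When $f$ is not conformal along the orbit of $z$ (say $z$ lies near a hyperbolic saddle) this eccentricity grows exponentially in $i$. To compensate you must shrink the allowed chart perturbation, which enlarges $\eta$, hence $N$, hence the maximal eccentricity again: the chain $N\to\text{charts}\to\cU'\to\eta\to N$ is circular and does not close. Shrinking $\delta$ does not help --- it separates the supports, but the linear parts of the charts are dictated by $\D_z f^i$ and are independent of $\delta$.

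This is exactly the obstruction the paper isolates as the non-conformal difficulty, and its fix is different from yours. It keeps a \emph{single} chart (the metric $\dd'$) at $z$, so every elementary perturbation is applied in that one chart and its $C^1$-size is governed by $\cU$ alone, breaking the circularity. The price is that the iterates $f^i(\hat S)$ are then genuine ellipses of growing eccentricity; the paper compensates geometrically by abandoning the straight-line interpolation and perturbing only at selected times $0\le n_0<n_1<\dots\le N$ chosen so that the displacement $f^{n_i}(\zeta_i)\to f^{n_i}(\zeta_{i+1})$ lies along the \emph{short} axis of the ellipse $f^{n_i}(\hat S)$, which is what allows the support of the elementary perturbation to fit inside it. The integer $N$ then depends on the derivatives of $f$ along the orbit of $z$ as well as on $\eta$, but not circularly.
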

\begin{figure}[ht]
\begin{center}
\input{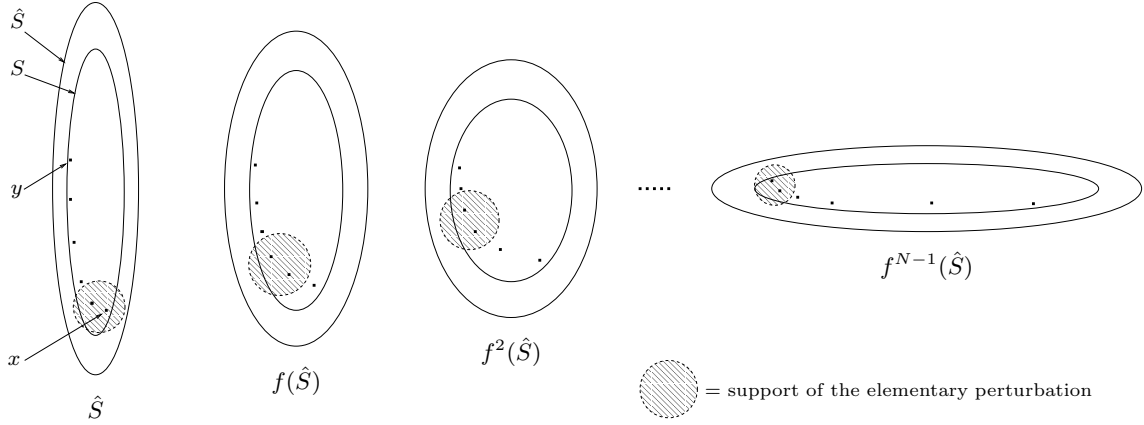}
\end{center}
\caption{Perturbations in theorem~\ref{t.pugh}. \label{f.pugh-lemma}}
\end{figure}
Note that since $z$ is not periodic, the ball $\widehat S$ and its $N-1$ first iterates are disjoint.
With this result we ``recover a constant $\eta=\frac 1 2$": the perturbation around $x$ and $y$
occurs in the ball of diameter less than $1+\frac 1 2$ times the diameter of the ball $S$ which contains $x$ and $y$.
The price to pay is that we also perturb in the $N-1$ iterates of the ball
$\widehat S$ (figure~\ref{f.pugh-lemma}).
However, the argument given at section~\ref{ss.proof-closing} remains and theorem~\ref{t.pugh} implies the closing lemma.

\begin{rema}\label{r.small-support}
Note that all the sets $\widehat S$, $f(\widehat S)$,\dots, $f^{N-1}(\widehat S)$ have roughly the shape of ellipsis.
An important improvement (used in the connecting lemma below) is that in each ellipsis $f^k(\widehat S)$ with
$k\in \{0,,\dots, N-1\}$, the support of the perturbation $g$ is contained in a ball which is small
with respect to the smaller axis of $f^k(\widehat S)$.
\end{rema}

\subsubsectionruninhead{Proof of theorem~\ref{t.pugh} when $f$ is conformal.}
In order to explain how to spread the perturbation in the time and get theorem~\ref{t.pugh},
we first consider
the case where $f$ is ``conformal" (i.e. the image of a small Euclidean ball is roughly a small euclidian ball).
One considers the (large) constant $\eta$ given by the elementary perturbation lemma, a small ball
$S=\B(z,\delta)$ around $z$ (for the standard metric) and two points $x,y\in S$.

We choose $N=4(1+\eta)$ and we divide the segment between $x$ and $y$ by a sequence $(\zeta_0=x,\dots,\zeta_N=y)$
of points at distance $\dd(x,y)/N$ from each other.

For each $i\in\{0,\dots,N-1\}$, the image of the ball $\widehat S=\B(z,\frac 3 2 \delta)$ by $f^i$
is roughly a ball, by assumption.
The two points $f^i(\zeta_i),f^i(\zeta_{i+1})$ are contained in this ball. Moreover, by our assumption,
the relative distance
between these two points in comparison to their distance to the boundary of $f^i(\widehat S)$ is small:
it is close to the relative distance between $\zeta_i$ and $\zeta_{i+1}$
in comparison to their distance to the boundary of $\hat S$.
So that the elementary perturbation lemma gives a
perturbation $g_i=\varphi_i\circ f$ in $\cU$ with support in $f^i(\widehat S)$ (figure~\ref{f.conformal}) such that
$g_i(f^i(\zeta_i))=f^{i+1}(\zeta_{i+1})$.
\begin{figure}[ht]
\begin{center}
\input{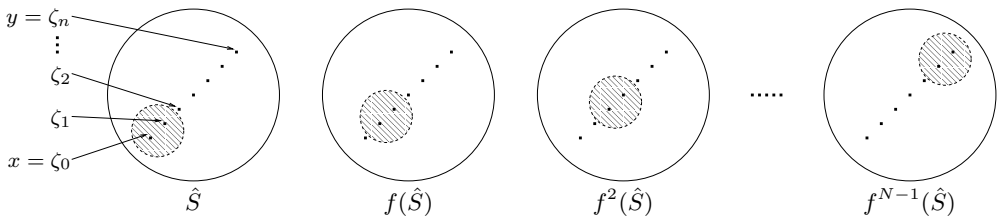}
\end{center}
\caption{Perturbation when $f$ is conformal. \label{f.conformal}}
\end{figure}

All the perturbations $g_i$ have by construction disjoint supports so that the composed perturbation
$g=(\varphi_0\circ \varphi_1\circ\dots\circ \varphi_{N-1})\circ f$ belongs to $\cU$ (by the composition property
of section~\ref{ss.composition}) and satisfies
$g^N(\zeta_0)=f^N(\zeta_N)$, as announced.\\

\subsubsectionruninhead{ Proof of theorem~\ref{t.pugh} when $f$ is not conformal.}
The difficulty when $f$ is not conformal is that the image of a ball is no more a ball (after several iterations,
it could be a ellipsis with a large eccentricity).
Hence, the two points $f^i(\zeta_i),f^i(\zeta_{i+1})$ could be at a small distance from the boundary of the
ellipsis $f^i(\widehat S)$ (in comparison to their relative distance). This is the case in particular if the segment
that joints them follows the direction of the largest axis of the ellipsis. Therefore, the support of perturbation
given by proposition~\ref{p.elementary} which sends $f^i(\zeta_i)$ on $f^i(\zeta_{i+1})$ is no more contained in
$f^i(\widehat S)$.
On the contrary, if one assumes that the segment between these points follows
the direction of the small axis of the ellipsis, the perturbation can be realized inside $f^i(\widehat S)$ as in the
conformal case.
On figure~\ref{f.difficulty}, the right part shows the case where the points are in a bad position so that
the elementary perturbation can not be realized; a good position is pictured on the left part of the figure.
\begin{figure}[ht]
\begin{center}
\input{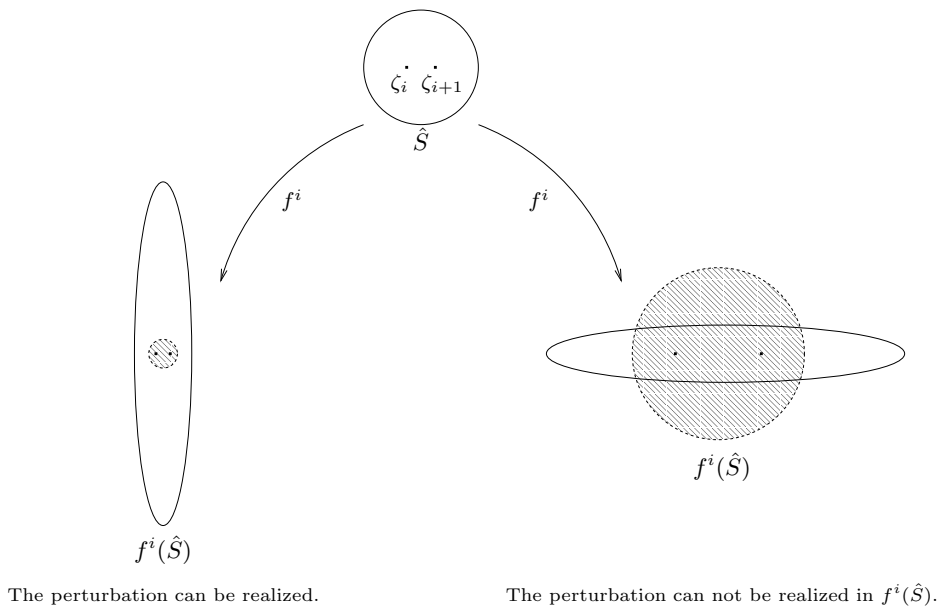}
\end{center}
\caption{Difficulty when $f$ is not conformal. \label{f.difficulty}}
\end{figure}

In order to bypass this problem, we introduce a different metric $\dd'$, so that $S$ and $\widehat S$ are chosen as 
balls for $\dd'$ but as ellipsis for the initial metric. The perturbation is not realized
at each time $i$ (hence the integer $N$ could be larger than in the previous case, depending on the behavior of the
derivatives of $f$ along the orbit of $z$) and the points $(\zeta_i)$ are not
chosen along a segment. What is important is that there are times $0\leq n_0<n_1<\dots<n_i<\dots\leq N$
such that the segment between the points
$f^{n_i}(\zeta_i)$ and $f^{n_i}(\zeta_{i+1})$ follows roughly the direction of the small axis of the ellipsis
$f^{n_i}(\widehat S)$. At these times, one realizes the elementary perturbations.
Figure~\ref{f.pugh-lemma} gives an idea of the way the perturbations are chosen.
We refer the reader to~\cite{wen-xia0} for a good detailed presentation of this proof.

\begin{rema}
One can choose $N$ large enough and the sequences $(\zeta_i)$ and $(n_i)$ carefully so that
at each time $n_i$, the distance between $f^{n_i}(\zeta_i)$ and $f^{n_i}(\zeta_{i+1})$ is small
in comparison to the small axis of $f^{n_i}(S)$. This implies the remark~\ref{r.small-support}.
\end{rema}

\subsection{The connecting lemma}\label{s.connecting}
\subsubsectionruninhead{Why the connecting lemma is more difficult than the closing lemma?}\label{ss.difficulty-connecting}
In the proof of the closing lemma (section~\ref{ss.proof-closing}) we had to select two returns $f^{i_0}(p)$ and $f^{j_0}(p)$ in a ball $B$,
with the property that they should be far enough from the other intermediate iterates.
When one tries to connect one orbit $\{f^n(p)\}$ to another one $\{f^{-m}(q)\}$,
one may also selects two points in $B$ among the returns $\{f^{n_\ell}(p)\}\cup \{f^{-m_k}(q)\}$ of the two orbits.
However if the two selected returns that we get belong both to the first or both to the second orbit, a perturbation will
produce a periodic orbit that crosses $B$ but not an orbit that joints the two points $p$ and $q$.
Hence, one should require that the two selected returns does belong to different orbits.
In general, it is not possible to find a pair of points which has this additional property (figure~\ref{f.connecting-difficulty}).
\begin{figure}[ht]
\begin{center}
\input{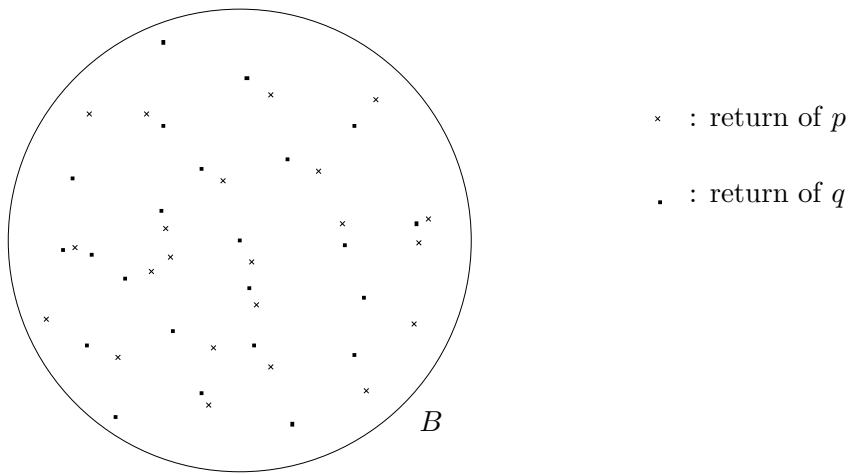}
\end{center}
\caption{A wrong selection of returns for the connecting lemma. \label{f.connecting-difficulty}}
\end{figure}

\subsubsectionruninhead{Hayashi's strategy.}
One idea of Hayashi is to clean up the cloud of returns of the two orbits in $B$ by forgetting some of them.
If two returns of the same orbit are close enough (for example two iterates $f^{n_1}(p)$ and $f^{n_2}(p)$ of $p$),
they prevent us from using the argument of section~\ref{ss.proof-closing} as we explained at~\ref{ss.difficulty-connecting}.
In this case, one will consider that
these two points are the same ($f^{n_1}(p)=f^{n_2}(p)$) and forget the intermediate returns in between.
One should also hope that a small perturbation could move $f^{n_1}(p)$ on $f^{n_2}(p)$ so that the assumption
is fulfilled.
Hence, Hayashi's strategy consists in selecting a large number of pairs of returns and not only one pair as in the closing lemma.
We then realize, for each of these pairs, a perturbation given by Pugh's theorem~\ref{t.pugh} in order to close the orbits.
One difficulty is to guarantee that all these perturbations have disjoint supports.

More precisely, let us consider the first returns $p_0$, $p_1$, \dots, $p_r$ of the forward orbit of $p$ in $B$, ordered
chronologically and the last returns $q_{-s}$, \dots, $q_{-1}$, $q_0$ of the backward orbit of $q$ in $B$, also
ordered chronologically. Recall that the two orbits accumulate on a same point $z$. It is thus possible to choose the ball $B$
centered at $z$ so that one can assume moreover that the last returns
$f^{n(p)}(p)=p_r$ and $f^{-n(q)}(q)=q_{-s}$ are very close to the center of the ball $B$.

In the chronological sequence $(p_0,\dots,p_r,q_{-s},\dots,q_0)$ we extract a subsequence of the form
$(x_0,y_0,x_1,y_1,\dots,x_\ell,y_\ell)$ so that (using Pugh's theorem~\ref{t.pugh}) for each $i$ one can perturb $f$
as a diffeomorphism $g_i=\psi_i\circ f$ which satisfies $g_i^N(x_i)=f^N(y_i)$.
The support of this perturbation is contained in a small ball $\widehat S_i$ contained in $B$, and in the $N-1$ first
iterates of $\hat S_i$.
Moreover, the supports of the different perturbations $(g_i)$ should be pairwise disjoint.

If moreover one has, $x_0=p_0$, $y_\ell=q_0$ and if for each $i$, the point $x_{i+1}$ is the first return of the
orbit of $y_i$ to the ball $B$, then, by composing all these perturbations $g_i$ of $f$,
one gets a diffeomorphism $g=\psi_0\circ \dots\circ \psi_{N-1}\circ f\in\cU$
which sends by some forward iteration the point $p$ on $q$. After perturbation the segment of orbit from $p$ to $q$
is shorter than the initial pseudo-orbit 
$(p,f(p),\dots,f^{n(p)}(p),f^{-n(q)+1}(q),\dots,f^{-1}(q),q)$ (see figure~\ref{f.combinatorics}).
\begin{figure}[ht]
\begin{center}
\input{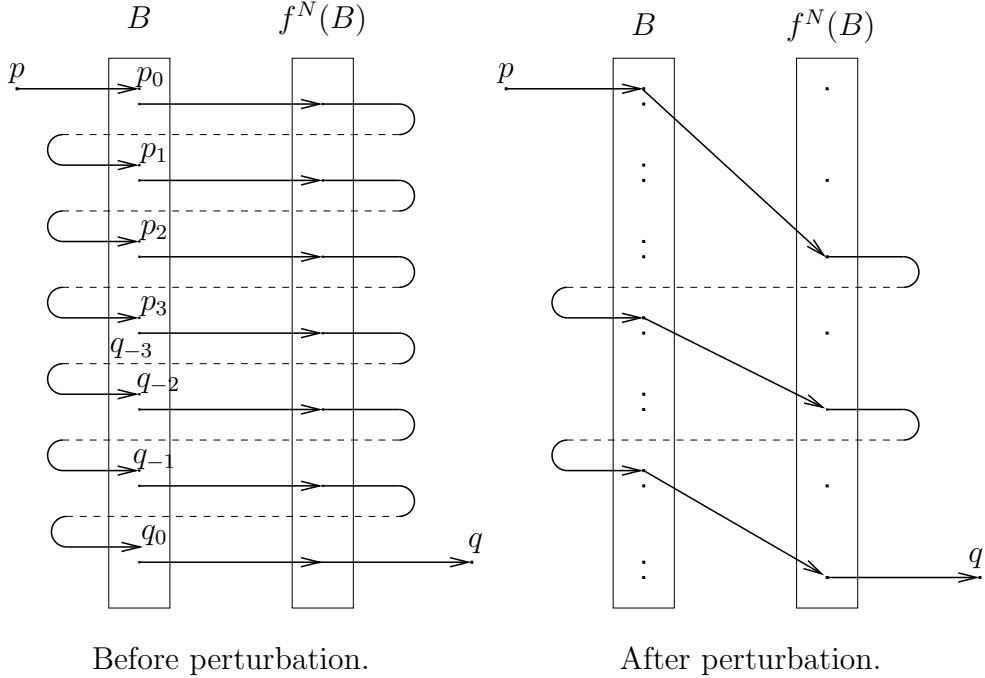}
\end{center}
\caption{Combinatorics of the perturbations realized by the connecting lemma. \label{f.combinatorics}}
\end{figure}

The main difficulty is to choose the subsequence $(x_0,y_0,x_1,y_1,\dots,x_\ell,y_\ell)$.
It is not built directly: we first introduce (section \ref{ss.tiled})
an intermediary sequence $(x'_0,y'_0,x'_1,y'_1,\dots,x'_{\ell'},y'_{\ell'})$ by cleaning up the points in the regions
where there are too much accumulations. We then select a second time (section~\ref{ss.shortcuts})
so that the perturbations associated to each pair
$(x_i,y_i)$ have disjoint supports.

\subsubsectionruninhead{Tiled cubes: first selection.}\label{ss.tiled}
In order to select the points, it is more convenient to replace the euclidian ball $B$ by a square
\footnote{We one replaces a ball by a square, one should specify the orientation of the axes. One chooses
the axis in the directions of the axis of the ellipsis $B$. In other terms,
the square will be viewed in the initial metric as a rectangle and not as a parallelogram.}
(for a metric $\dd'$ given by theorem~\ref{t.pugh}). Viewed with the initial metric, the set $B$ is a rectangle.
We then tile it as pictured on figure~\ref{f.tiled}.
This tiling allows us to decide when too much points of the cloud of returns accumulate in a region of the cube $B$.
\begin{figure}[ht]
\begin{center}
\input{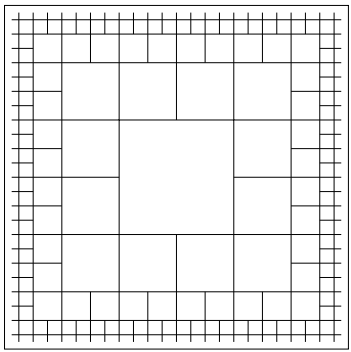}
\end{center}
\caption{A tiled cube. \label{f.tiled}}
\end{figure}

We select a first subsequence $(x'_0,y'_0,x'_1,y'_1,\dots,x'_{\ell'},y'_{\ell'})$
from $(p_0,\dots,p_r,q_{-s},\dots,q_0)$ so that each tile of the square $B$ contains at most one pair
$(x'_i,y'_i)$. This can be done by induction: once $(x'_i,y'_i)$ has been defined, one chooses $x'_{i+1}$
as the first return of $y'_i$ to $B$. The point $y'_{i+1}$ is then the last point in
$\{p_0,\dots,p_r,q_{-s},\dots,q_0\}$ which belong to the tile of $B$ which contains $x'_{i+1}$. 
The only assumption used here is that $p_r$ and $q_{-s}$ are close enough to $z$ so that they belong to the
same central tile of $B$. The picture after this first selection is represented at figure~\ref{f.selection1}.
\begin{figure}[ht]
\begin{center}
\input{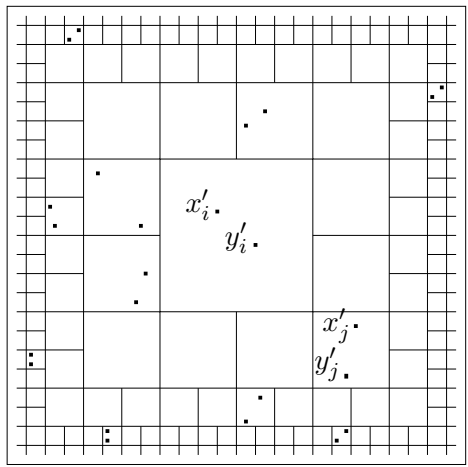}
\end{center}
\caption{The first selection. \label{f.selection1}}
\end{figure}

This first selection is not sufficient for the connecting lemma. Indeed,
if one applies Pugh's theorem~\ref{t.pugh} to define some perturbations $g'_i$ such that
$(g'_i)^N(x_i)=f^N(y_i)$, the supports of these perturbations may overlap: if $x_i$ and $y_i$
belong to a tile $T$ of $B$, the support of the perturbation will be contained in the rectangle $\widehat T$ (obtained
from $T$ by an homothety of ratio $3/2$) and in the $N-1$ first iterates of $\widehat T$. The problem here appears if one has to
perturb in two adjacent tiles, since the two perturbations will conflict.
Hence, we can not compose in general these perturbations $g'_i$ and
we before need to select a second sequence from the cloud
$\{x'_0,y'_0,x'_1,y'_1,\dots,x'_{\ell'},y'_{\ell'}\}$.

\subsubsectionruninhead{Shortcuts: second selection.}\label{ss.shortcuts}
In order to explain how to handle the conflicts described before, let us consider the case where the support
of two perturbations $g'_i$ and $g'_j$ defined above overlap
(they may not overlap in the cube $B$ but in the image $f^k(B)$ of $B$ for some $k\in \{0,\dots,N-1\}$).
This means that the points $\{x'_i,y'_i\}$ (in a tile $T_i$)
and the points $\{x'_j,y'_j\}$ (in a tile $T_j$) have their images by $f^{k}$ close.
We also will assume that $i<j$.

The idea to solve the conflict is two replace the two perturbations which send $x'_i$ and on $f^N(y'_i)$ and $x'_j$
on $f^N(y'_j)$ respectively, by a single perturbation which sends $x'_i$ on $f^N(y'_j)$ (figure~\ref{f.shortcut}).
After this construction, we erase the intermediary points $\{x'_k,y'_k\}$ with $k\in \{i+1,\dots,j-1\}$ from the sequence
$(x'_0,y'_0,x'_1,y'_1,\dots,x'_{\ell'},y'_{\ell'})$ and get a new pseudo-orbit from $p$ to $q$.
In other words, we realized a shortcut in the pseudo-orbit that connects $p$ to $q$.
\begin{figure}[ht]
\begin{center}
\input{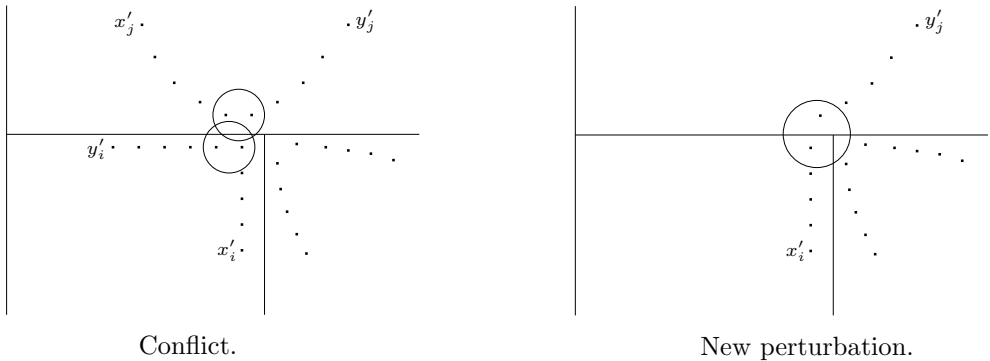}
\end{center}
\caption{A shortcut. \label{f.shortcut}}
\end{figure}

Recall that the supports of the perturbations $g'_i$ and $g'_j$ should be very small in comparison to the iterates of
the tiles $T_i$ and $T_j$ (remark~\ref{r.small-support}) hence, we get that the tiles $T_i$ and $T_j$ should be adjacent and
that the support of the new perturbation remains small in comparison to these tiles.

We then continue in this way in order to solve all the conflicts. We note that each time one solve a conflict,
one gets a new perturbation whose support is a little bit larger. Hence, it could meet the support of another
perturbation $g'_k$ and one should solve a new conflict. One may wonder if the number of conflicts one should consider
starting from an initial pair $(x'_i,y'_j)$ can be arbitrarily large so that the support of the final perturbation may
become huge in comparison to the size of the initial tile $T_i$.

This is not the case: we control a priori the size of all the perturbations so that all the conflicts that can occur
happen with tiles that are adjacent to the initial tile $T_i$. Since the geometry of the tiling is bounded, the
number of tiles that are adjacent to $T_i$ is bounded (by $12$) and we know that we will have to solve at most $12$
conflicts. If one chooses (in Pugh's theorem) the supports of the perturbations $(g'_k)$ very small in comparison
to the size of the corresponding tiles as it is allowed by remark~\ref{r.small-support}, the perturbation that we will get after
solving $12$ conflicts will remains small in comparison to the initial tile $T_i$.
Hence, its support can not reach any new tile
(the other tiles are not tangent to $T_i$ and consequently are far from the support of the perturbation we obtained,
see figure~\ref{f.conflict}).
\begin{figure}[ht]
\begin{center}
\input{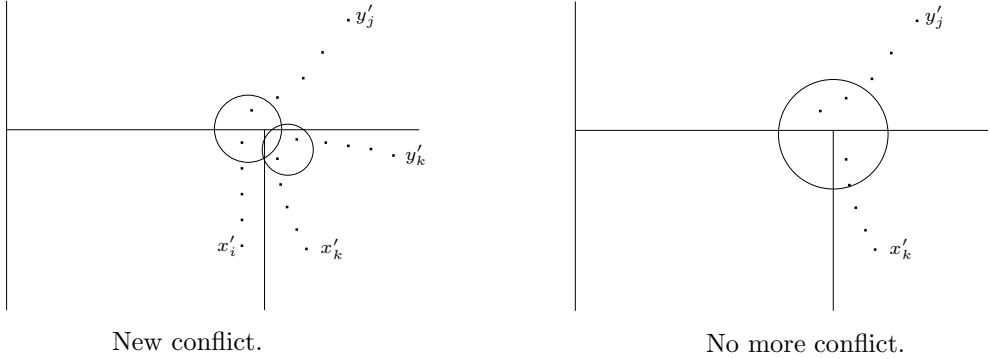}
\end{center}
\caption{The number of conflicts remains bounded. \label{f.conflict}}
\end{figure}
After solving all the conflicts, we get a new pseudo-orbit, a new sequence
$(x_0,y_0,x_1,y_1,\dots,x_\ell,y_\ell)$ and a collection of perturbations $(g_i)$
whose supports are pairwise disjoint and such that, for each $i$, we have
$g^N_i(x_i)=y_i$.

\subsubsectionruninhead{Conclusion.}\label{ss.conclusion-connecting}
The proof gives a better statement than theorem~\ref{t.connecting}. The tiled cube $B$ and the integer $N$ are built
independently from the orbits of the points $p$ and $q$. The only assumption we used was that
$p$ and $q$ have two returns $f^{n(p)}(p)$ and $f^{-n(q)}(q)$ in a same tile of $B$ (for example the central tile
of the cube).

\begin{theo}[Connecting lemma, $2^\text{nd}$ version]
Let $f$ be a $C^1$-diffeomorphism and $\cU$ a neighborhood of $f$ in $\diff^1_v$. For any point $z$ that is not periodic,
there is an integer $N$ such that for any neighborhood $U$ of $z$, there exists a smaller neighborhood $V$
which has the following property:

If $p$ and $q$ are two points outside $U\cup f(U)\cup\dots\cup f^{N}(U)$ that have some iterates
$f^{n(p)}(p)$ and $f^{-n(q)}(q)$ inside $V$, then, there is a perturbation $g\in \cU$ of $f$
with support in $U\cup f(U)\cup\dots\cup f^{N-1}(U)$ and an integer $n\geq 1$ which satisfy $g^n(p)=q$.
\end{theo}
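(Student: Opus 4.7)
The plan is to obtain the desired perturbation $g$ as a controlled composition of elementary perturbations given by Pugh's theorem~\ref{t.pugh}, using the two-step combinatorial selection (tiled cubes, then shortcuts) described in section~\ref{s.connecting}. First, I would shrink $\cU$ so that it satisfies the composition property of section~\ref{ss.composition}. Since $z$ is not periodic, theorem~\ref{t.pugh} supplies an integer $N$ and a Riemannian metric $\dd'$ such that, for any two points $x,y$ lying in a small $\dd'$-ball $S$ around $z$, there is a perturbation in $\cU$ sending $x$ to $f^N(y)$, supported in the $N$ iterates of the slightly enlarged ball $\widehat S$; moreover, by remark~\ref{r.small-support}, one may arrange the support inside each $f^k(\widehat S)$ to be arbitrarily small relative to the smaller axis of $f^k(\widehat S)$. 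Given the prescribed neighborhood $U$ of $z$, I would then tile a $\dd'$-cube $B \subset U$ as in section~\ref{ss.tiled} and take $V$ to be the central tile of $B$, with $N$ and the tiling independent of $p$ and $q$.

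Now let $p$, $q$ be as in the statement, and list the successive returns $p_0,\dots,p_r$ of the forward orbit of $p$ to $B$ and the returns $q_{-s},\dots,q_0$ of the backward orbit of $q$. The hypothesis that $f^{n(p)}(p),f^{-n(q)}(q)\in V$ ensures that the concatenated chronological sequence $(p_0,\dots,p_r,q_{-s},\dots,q_0)$ has its two central points in the same tile of $B$. Following section~\ref{ss.tiled}, I inductively extract a first subsequence $(x'_0,y'_0,\dots,x'_{\ell'},y'_{\ell'})$ by letting $y'_i$ be the last point of the full sequence lying in the tile of $x'_i$, and $x'_{i+1}$ the first subsequent return. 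By construction each tile contains at most one pair $(x'_i,y'_i)$, and the concatenated segment $(p, f(p),\dots,x'_0,\dots,y'_{\ell'},\dots,q)$ is a pseudo-orbit from $p$ to $q$ whose jumps in $B$ all occur between points of a same tile.

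To each pair $(x'_i,y'_i)$, Pugh's lemma attaches a perturbation $g'_i$ supported in the $N$ iterates of the enlarged tile $\widehat T_i$, but for different pairs these supports can conflict in the iterates $f^k(B)$. Here I would apply the shortcut procedure of section~\ref{ss.shortcuts}: whenever the supports of $g'_i$ and $g'_j$ with $i<j$ overlap, replace both by a single perturbation sending $x'_i$ directly to $f^N(y'_j)$ and discard the intermediate pairs. The essential quantitative input, provided by remark~\ref{r.small-support}, is that each elementary support is tiny relative to its tile; hence overlaps can only occur between adjacent tiles, the new support remains a small enlargement of the original, and the bounded geometry of the tiling (at most $12$ neighbors of any given tile) caps the number of successive conflicts arising from a single initial tile. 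Choosing the initial supports small enough a priori, the shortcut process terminates with a final subsequence $(x_0,y_0,\dots,x_\ell,y_\ell)$ whose associated perturbations $\psi_i$ have pairwise disjoint supports contained in $U\cup f(U)\cup\dots\cup f^{N-1}(U)$.

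Finally, I would set $g=\psi_0\circ\cdots\circ\psi_\ell\circ f$; the composition property keeps $g\in\cU$, and by construction $x_0$ lies on the forward $f$-orbit of $p$, $y_\ell$ lies on the backward $f$-orbit of $q$, and the $g$-orbit of $p$ threads through the pairs $(x_i,y_i)$, yielding an integer $n\geq 1$ with $g^n(p)=q$. The main obstacle, and the technical heart of the argument, is the shortcut bookkeeping of the previous paragraph: one must secure a uniform a priori control on the size of elementary supports, \emph{before} the pair $(p,q)$ is even specified, so that the cumulative growth of a support during conflict resolution never escapes the tile in which it was born. This uniformity is precisely what allows $N$ and the tiling of $B$ (hence $V$) to be chosen depending only on $\cU$, $z$, and $U$, as required by the statement.
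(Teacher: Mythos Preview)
Your proposal is correct and follows precisely the paper's approach: the paper's own proof consists of the single sentence ``We prove it by choosing the square $B$ inside $U$; the neighborhood $V$ is the central tile of $B$,'' implicitly invoking the machinery of section~\ref{s.connecting}, and you have faithfully unpacked that machinery (Pugh's metric and integer $N$, the tiled cube, the first selection, the shortcut procedure with its bounded-geometry control on conflicts, and the final composition). Your emphasis on the uniformity of $N$ and the tiling in $p,q$ is exactly the point of this second version.
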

This is this statement that was used to get theorem~\ref{t.horseshoe}.
We prove it by choosing the square $B$ inside $U$; the neighborhood $V$ is the central tile of $B$.
\vspace{1cm}

\newpage
\section{Global perturbations in conservative dynamics}\label{s.global}
In this part, we explain the proof of the connecting lemma for pseudo-orbits.
\begin{theo}\label{t.pseudo-connecting2}
Let $f\in\diff^1_v$ be a diffeomorphism such that for any $\tau\geq 1$, the periodic orbits of period
less than $\tau$ are finite. Then, for any $p,q\in M$, there exists a $C^1$-small perturbation $g\in\diff^1_v$ of $f$
and an integer $n\geq 1$ such that $g^n(p)=q$.
\end{theo}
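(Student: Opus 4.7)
\begin{demo}[Proof plan]
The strategy is to combine Proposition~\ref{p.chainrecurrence} with a carefully organised family of applications of the second version of the connecting lemma (section~\ref{ss.conclusion-connecting}). Fix a neighbourhood $\cU$ of $f$ with the composition property of section~\ref{ss.composition}. Proposition~\ref{p.chainrecurrence} provides, for every $\varepsilon>0$, an $\varepsilon$-pseudo-orbit from $p$ to $q$; the goal is to show that, for $\varepsilon$ small enough depending only on $\cU$, any such pseudo-orbit can be realised as a genuine orbit of some $g\in\cU$.

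The first, and main, step is the construction of a ``network'' of perturbation channels covering $M$. The hypothesis on periodic orbits implies that, for any integer $N$, the non $N$-periodic points form an open and dense subset of $M$. One picks finitely many non-periodic points $\zeta_1,\dots,\zeta_r$ and applies the second version of the connecting lemma at each of them, obtaining integers $N_j$ and nested open sets $V_j\subset U_j\ni\zeta_j$ with the following property: any two orbits reaching $V_j$ from outside the tube
$$T_j\;=\;U_j\cup f(U_j)\cup\dots\cup f^{N_j}(U_j)$$
may be glued by a perturbation in $\cU$ supported inside $T_j\setminus f^{N_j}(U_j)$. The choice is made so that the $V_j$ still cover $M$ while the tubes $T_1,\dots,T_r$ are pairwise disjoint, which is possible because each $\zeta_j$ is non-periodic and only finitely many tubes compete.

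Once the network is in place, fix $\varepsilon$ much smaller than its geometry and let $(z_i)_{0\le i\le\ell}$ be an $\varepsilon$-pseudo-orbit from $p$ to $q$. Since the $V_j$ cover $M$, the pseudo-orbit has a natural itinerary $V_{j_0},V_{j_1},\dots,V_{j_k}$ through the network, and for $\varepsilon$ small enough the orbit pieces between consecutive visits stay outside $\bigcup_j T_j$. A shortcut operation in the spirit of section~\ref{ss.shortcuts} then removes repetitions: whenever a channel $V_j$ is entered twice at times $a<b$, the intermediate portion of the pseudo-orbit is discarded. After this cleanup each channel is used at most once while the remaining pseudo-orbit still joins $p$ to $q$.

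For each surviving channel $V_{j_a}$ the entering and leaving pieces lie outside $T_{j_a}$, so the connecting lemma produces a perturbation $g_a\in\cU$ supported inside $T_{j_a}$ which splices them together. Pairwise disjointness of the tubes implies pairwise disjointness of the supports, so the composition of the $g_a$ is a diffeomorphism $g\in\cU$ satisfying $g^n(p)=q$ for some $n\ge 1$. The main obstacle is the first step: keeping the $V_j$ as a cover of $M$ while forcing the tubes $T_j$ to be pairwise disjoint is delicate, since shrinking $U_j$ to reduce conflicts typically forces $N_j$ to grow, lengthening $T_j$ in the time direction. Handling this tension through a careful inductive construction, which crucially exploits the bounded-period finiteness hypothesis on $f$, is the heart of the proof.
\end{demo}
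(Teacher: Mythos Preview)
Your plan has a genuine gap in the first step. You ask for open sets $V_j\subset U_j$ such that the $V_j$ cover $M$ while the tubes $T_j=U_j\cup f(U_j)\cup\dots\cup f^{N_j}(U_j)$ are pairwise disjoint. This is topologically impossible. Pairwise disjointness of the tubes forces in particular the $U_j$ themselves to be pairwise disjoint; since $M$ is connected and the $U_j$ are open, their union is a proper subset of $M$ as soon as $r\ge 2$, so the smaller sets $V_j\subset U_j$ cannot cover $M$ either. (With a single $U_1$, disjointness from $f(U_1)$ again prevents a cover.) The same obstruction kills your later claim that the orbit pieces between consecutive visits ``stay outside $\bigcup_j T_j$'': if the $V_j$ covered $M$ then so would the $T_j$, and no point could lie outside. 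The tension you flag at the end is therefore not merely delicate; it is insoluble in the form you state, and no inductive bookkeeping on the $N_j$ will rescue it.

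The paper sidesteps this by \emph{not} building a universal network. It first constructs, for the given pair $(p,q)$, a ``geometric'' pseudo-orbit whose jumps lie along finitely many short paths $\gamma_k$, arranged (via Poincar\'e recurrence and a path-surgery argument) so that all the sets $f^i(\gamma_k)$, $0\le i\le N-1$, are pairwise disjoint. Only then are the $\gamma_k$ thickened into tiled perturbation domains $B_k$; these inherit the disjointness of the paths and are under no obligation to cover anything. A strengthened connecting lemma (property~(P), which handles arbitrarily many jumps inside a single tiled domain rather than one) then removes all jumps at once. The essential conceptual shift is that the perturbation domains are chosen \emph{after}, and adapted to, a specific pseudo-orbit, not laid down in advance as a fixed cover of $M$.
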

The proof we give here is due to Arnaud-Bonatti-Crovisier but is slightly different from the arguments
in~\cite{BC,arnaud-bonatti-crovisier}.

\subsection{Introduction}
It is clear that one can not hope to connect a pseudo-orbit into an orbit by performing only a local perturbation:
we shall use several perturbations, given by Hayashi's connecting lemma. Provided they have
disjoint supports, the composed perturbation remains small as it is explained at section~\ref{ss.composition}.

The proof will use two main ingredients:
\begin{enumerate}
\item we will consider (section~\ref{s.perturbation-domains}) a generalization of the tiled cubes given by the proof of the
local connecting lemma (see section~\ref{ss.tiled}): we call them {\em perturbation domains};
\item using strongly the existence of an invariant probability measure with full support (here, a smooth volume),
we build (section~\ref{s.choice}), for
each pair of points $x,y$ that are not periodic, some perturbation domains and a pseudo-orbit that joints $x$ to $y$
and whose jumps are contained in the tiles of the perturbation domains.
\end{enumerate}
The final argument (section~\ref{s.conclusion}) consists in perturbing in each perturbation domain.

\subsection{The perturbation domains}\label{s.perturbation-domains}
One can revisit the proof of Hayashi's connecting lemma and obtain a more general statement.
Let us recall the main ideas. We consider a diffeomorphism $f\in\diff^1_v$ and a neighborhood $\cU$ of $f$.
To any point $z\in M$, which is not periodic, one can associates:
\begin{itemize}
\item an integer $N\geq 1$,
\item a small tiled cube $B$, as pictured on figure~\ref{f.tiled} and disjoint from its $N-1$ first iterates,
\end{itemize}
such that the following property is satisfied:
\begin{description}
\item[\quad\quad] \it Let $p$ be a point whose forward orbit meets the central tile $T_c$ of the tiling of $B$,
and $q$ a point whose backward orbit meets $T_c$.
Then, there exists a perturbation $g\in \cU$ of $f$ with support in $B\cup  f(B)\cup\dots\cup f^{N-1}(B)$
and an integer $m\geq 1$ such that $g^m(p)=q$.
\end{description}
We also recall that the existence of the cube $B$ was given by Pugh's perturbation lemma (theorem~\ref{t.pugh}, see also section~\ref{ss.tiled}).

We now explain how to generalize the assumptions of Hayashi's connecting lemma.

\subsubsectionruninhead{More jumps.}
The connecting lemma allows to connect an orbit to another one provided that the concatenation of
these two segments of orbit is a pseudo-orbit whose unique jump is moreover contained in the cental tile $T_c$ of $B$.
However, the central tile $T_c$ may be replaced by any other tile of $B$, and the proof of the connecting lemma, as explained
at section~\ref{s.connecting}, can also deal with pseudo-orbits $(z_k)$ with any number of jumps. The only assumption that should
be required is that ``{\it the jumps of the pseudo-orbit are contained in the tiles of $B$}".
This means that for each $k$, if $f(z_k)\neq z_{k+1}$
then $f(z_k)$ and $z_{k+1}$ are contained in a same tile.
(Note that for proving the connecting lemma, we already considered this kind of pseudo-orbit: in the first step of the proof
we cleaned up the cloud of returns of the two orbits and obtained, after a first selection, a pseudo-orbit whose jumps
are contained in the tiles of the cube $B$.)

Sometimes it is useful to allow also jumps that are slightly larger than the tiles of $B$:
we introduce the \emph{enlarged tiles $\hat T$ of $B$} as the squares, having the same centers and the same axis as the tiles $T$ of $B$,
but with sizes $1+\frac{1}{10}$ times larger.
Then, we will say that a pseudo-orbit $(z_k)$ \emph{respects the tiling of $B$} if for any $k$
such that $f(z_k)\neq z_{k+1}$, there exists an enlarged tile $\hat T$ which contains both $f(z_k)$ and $z_{k+1}$.

We now claim that the following property is satisfied by the cubes $B$ given by the connecting lemma (figure~\ref{f.prop-P}):
\begin{description}
\item[(P)\quad]\it For any pseudo-orbit $(z_0,\dots,z_n)$ which respects the tiling of $B$,
there exists a perturbation $g\in \cU$ of $f$ with support in $B\cup  f(B)\cup\dots\cup f^{N-1}(B)$
and an integer $m\in \{1,\dots,n\}$ such that $g^m(z_0)=z_n$.
\end{description}
\begin{figure}[ht]
\begin{center}
\input{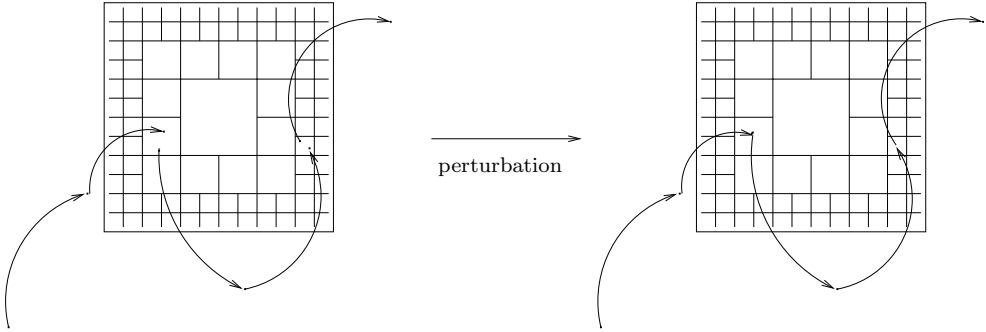}
\end{center}
\caption{Perturbation in a tiled cube (property (P)). \label{f.prop-P}}
\end{figure}

Property (P) is obtained easily from the arguments of section~\ref{s.connecting}:
by a first selection, one can assume that each enlarged tile of $B$ is associated at most to one jump of the pseudo-orbit.
One then removes all the jumps, using the elementary perturbation lemma (proposition~\ref{p.elementary}) for each enlarged tile that contains
a jump. One has to solve the conflicts if necessary:
the crucial argument is that the number of conflicts that are related to a jump is a priori bounded by $12$:
This comes from the following facts:
\begin{itemize}
\item if two enlarged tiles $\hat T$ and $\hat T'$ intersect, then the tiles $T$ and $T'$ are adjacent,
\item the number of tiles $T'$ adjacent to a tile $T$ is bounded by $12$.
\end{itemize}

\subsubsectionruninhead{Tiled domains.}
The cube $B$ which supports the perturbations is obtained from the metric $\dd'$ given by Pugh's theorem~\ref{t.pugh}.
More precisely, there is a local chart $\psi_z\colon U_z\to \RR^2$ defined on a small neighborhood $U_z$ of $z$
such that Pugh's metric $\dd'$ is the pull back by $\psi_z$ of the standard Euclidian metric on $\RR^2$.
In the proof of the connecting lemma, we did not use however the fact that $B$ was a cube; we just needed the bounded
geometry of the tiling: the image $B_0$ of $B$ by $\psi_z$ is a tiled open set of $\RR^2$ which has the following properties:
\begin{description}
\item[(T1)\quad] The tiles of $B_0$ are squares of $\RR^2$ that are oriented along the canonical axis of $\RR^2$.
\item[(T2)\quad] Let us associate to any tile $T_0$, the enlarged cube $\Hat{\Hat{T_0}}$ with the same center but obtained
from $T_0$ by scaling by a ratio $1+\frac{1}{5}$. Then, any two tiles $T_0$ and $T'_0$ are adjacent once
the enlarged cubes $\Hat{\Hat{T_0}}$, $\Hat{\Hat{T_0'}}$ intersect.
\item[(T3)\quad] There exits a uniform constant (here 12) that bounds the number of tiles $T'_0$ that are adjacent to a tile $T_0$ of $B_0$. 
\end{description}
Any tiled open set $B\subset U_z$ which satisfies these two properties will be called
a {\em tiled open set for the chart $(\psi_z,U_z)$} (figure~\ref{f.tiled-domain}).
\begin{figure}[ht]
\begin{center}
\input{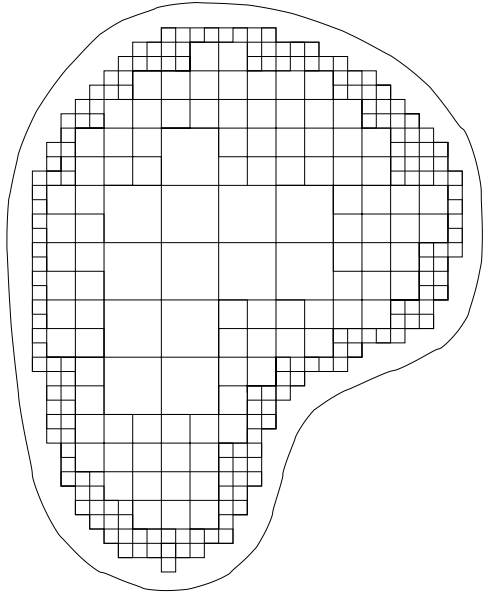}
\end{center}
\caption{A tiled open set. \label{f.tiled-domain}}
\end{figure}

One ends this paragraph by showing how to tile any open set (figure~\ref{f.tiling-domain}):
\begin{prop}\label{p.how-tile}
Any open set $\tilde B$ of $\RR^2$ admits a tiling by squares which satisfies properties (T1), (T2) and (T3).
\end{prop}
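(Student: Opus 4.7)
The plan is to apply a Whitney-type decomposition to $\tilde B$: tile it by axis-aligned dyadic squares whose side lengths are comparable to (and a small fraction of) their distance from $\partial\tilde B$, and then tune the scale parameter so that (T1)--(T3) can all be read off from standard Whitney bookkeeping.

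The trivial case $\tilde B=\RR^2$ is handled by any axis-aligned unit-square grid, for which (T1) is immediate, (T3) holds with constant $8$, and (T2) is clear since a $1+\tfrac{1}{5}$-expansion of a unit square can only reach into its $8$ closed neighbours.

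Assume now $\tilde B\subsetneq\RR^2$. Set $\delta(x)=\dd(x,\partial\tilde B)$ and fix a small constant $\alpha>0$ to be chosen later. To each $x\in\tilde B$ associate the largest standard dyadic square containing $x$ with side at most $\alpha\,\delta(x)$; eliminating nested choices produces a family of closed axis-aligned dyadic squares with pairwise disjoint interiors whose union is $\tilde B$ and such that each tile $T$ satisfies $\tfrac{1}{4}\alpha\,\delta(T)\leq \mathrm{side}(T)\leq\alpha\,\delta(T)$. Property (T1) is then immediate, and a standard Whitney argument shows that two tiles whose closures meet have side lengths differing by a factor of at most $2$.

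For (T2), if $\Hat{\Hat{T}}\cap\Hat{\Hat{T'}}\neq\emptyset$ then $\dd(T,T')\leq\tfrac{1}{10}\bigl(\mathrm{side}(T)+\mathrm{side}(T')\bigr)$. Combined with the bounded side-length ratio and the dyadic alignment, this forces $T$ and $T'$ to share at least a corner once $\alpha$ is small enough, which is precisely adjacency. For (T3), a direct planar count of the axis-aligned dyadic squares whose side lengths lie within a factor of $2$ of a given $T$ and whose closures meet $T$ yields a uniform bound. The main technical obstacle is the joint calibration of $\alpha$ with the enlargement factor $1+\tfrac{1}{5}$: $\alpha$ must be small enough that the $\tfrac{1}{5}$-expansion of a tile cannot cross any same-scale neighbour, yet the dyadic Whitney ratio between adjacent tiles must remain controlled. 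Once these constants are chosen consistently, the bound $12$ on the number of neighbours follows by inspection of the possible configurations at an edge and at a corner of $T$.
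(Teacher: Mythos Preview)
Your approach and the paper's are essentially the same: both produce a Whitney-type dyadic decomposition of $\tilde B$. The paper proceeds level by level through the standard grids $\cT_n$ of squares of side $2^{-n}$, retaining a square $T\in\cT_n$ whenever $T$ together with all its $\cT_n$-neighbours lies in $\tilde B$ and $T$ is not already covered at an earlier level. This inductive rule directly forces adjacent tiles to differ in side length by at most a factor~$2$, from which (T2) and (T3) follow; the paper does not spell out the verification.

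The one place your write-up goes slightly astray is the role you assign to the parameter $\alpha$. The verification of (T2) does not actually depend on taking $\alpha$ small: the $\tfrac{1}{10}\cdot\mathrm{side}(T)$ overhang of $\Hat{\Hat{T}}$ can never cross an adjacent tile of side at least $\tfrac12\,\mathrm{side}(T)$, and this adjacent-ratio bound is a structural feature of the dyadic construction, not something controlled by $\alpha$. So the ``joint calibration of $\alpha$ with the enlargement factor'' you describe as the main technical obstacle is a detour; once you drop it, your argument is correct and matches the paper's.
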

\begin{proof}
We introduce the standard tilings $\cT_n$ of $\RR²$ by squares of size $2^{-n}$.
Let $U$ be any open subset of $\RR^2$. The announced tiling $\cT$ is built from the standard tiling $\cT_n$
by deciding inductively what are the tiles of $\cT_n$ that belong to $\cT$: a tile $T\in\cT_n$ will belong to $\cT$ if
\begin{itemize}
\item $T$ and all the tiles in $\cT_n$ that are adjacent to $T$ are contained in $U$,
\item $T$ is not contained in the sub-domain of $U$ tiled at the previous steps by tiles of lower tiling $\cT_m$, $m<n$.
\end{itemize}
\end{proof}
\begin{figure}[ht]
\begin{center}
\input{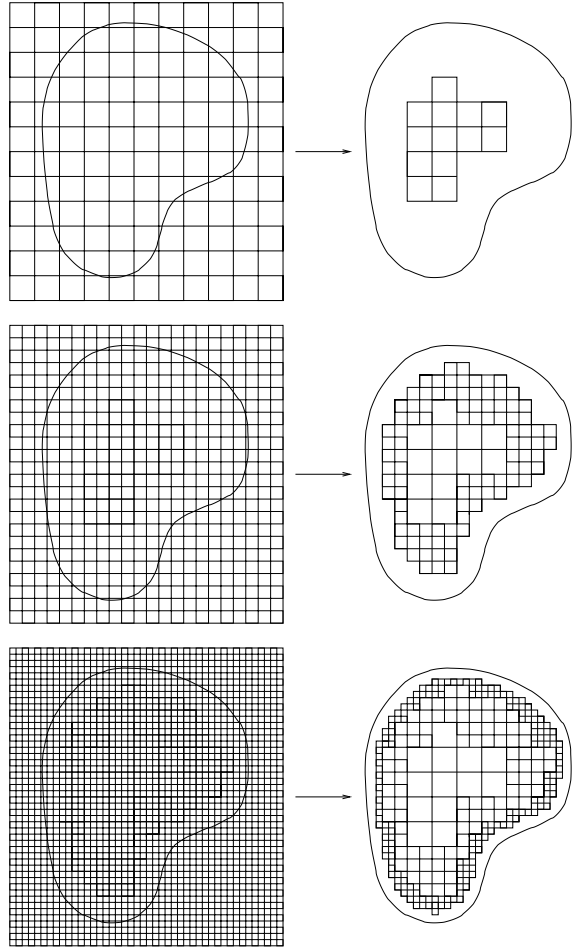}
\end{center}
\caption{How to tile an open set. \label{f.tiling-domain}}
\end{figure}

\subsubsectionruninhead{An improved connecting lemma}
We mention two other technical improvements:
up to here, we have assumed that the point $z$ where we defined the tiled cube or open set is not periodic.
This was necessary, since we need to spread the perturbations in the time during $N$ iterates.
However, the construction of Pugh's metric $\dd'$ around $z$, or equivalently of the chart $\psi_z\colon U_z\to\RR^2$
does not require that $z$ is non-periodic: we only have to consider the sequence of derivatives $\D_z f^n$ at $z$ in order
to analyze what are the directions mostly contracted or expanded by the dynamics.
Then, any tiled cube (or tiled open set) contained in $U_z$ and disjoint from its $N-1$ first iterates will satisfy property (P).
This remark is important since it allows by compactness to cover $M$ by finitely many charts $\psi_z$.
Hence, it is now clear that the integer $N$ depends only on $(f,\cU)$ and not on the choice of the point $z$.

We sum up all the previous remarks by saying that the connecting lemma asserts the existence of tiled open sets which satisfy condition (P):
these sets will be called {\em perturbation domain}.
\begin{theo}[Connecting lemma, $3^\text{rd}$ version]\label{t.connecting3}
For any diffeomorphism $f$ and any neighborhood $\cU$ of $f$ in $\diff^1_v$, there exists an integer $N\geq 1$ and,
at any point $z\in M$, there is a chart $\psi_z\colon U_z\to \RR^2$ such that any tiled open set $B\subset U_z$ for the chart $(\psi_z,U_z)$,
that is disjoint from its $N-1$ first iterates, satisfies property (P).
\end{theo}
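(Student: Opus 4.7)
The plan is to extract from the proof of Hayashi's connecting lemma (as revisited in section~\ref{s.connecting}) the two pieces of data that actually matter locally, namely a chart in which we can draw tilings, and an integer $N$ controlling how long we must spread the perturbation, and then to globalize by a compactness argument on $M$.

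First I would revisit Pugh's theorem~\ref{t.pugh} and observe that its output, the Riemannian metric $\dd'$ adapted to the orbit of $z$, is built only from the sequence of derivatives $(\D_z f^n)$ and from the elementary perturbation lemma (proposition~\ref{p.elementary}): the assumption that $z$ be non-periodic was used only to guarantee that the supports $\widehat S, f(\widehat S),\dots,f^{N-1}(\widehat S)$ of the spread perturbation stay disjoint, not in the construction of $\dd'$ itself. Therefore at every $z\in M$ (periodic or not) one obtains a chart $\psi_z\colon U_z\to \RR^2$ in which $\dd'$ is the pullback of the Euclidean metric and in which the spread-in-time mechanism of theorem~\ref{t.pugh} is available. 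The integer $N(z)$ produced by this construction depends semicontinuously on the sequence of tangent maps along the orbit of $z$, hence is locally bounded; by compactness of $M$ one extracts a finite family of charts $\psi_{z_1},\dots,\psi_{z_r}$ whose domains cover $M$ together with a common integer $N$ that works at every point. The non-periodicity condition is then replaced by the explicit hypothesis that $B$ be disjoint from its $N-1$ first iterates, which is exactly what prevents the composed perturbations from interfering with each other under the dynamics.

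Next, given such a tiled open set $B\subset U_z$ and a pseudo-orbit $(z_0,\dots,z_n)$ respecting the tiling, I would reproduce verbatim the selection-and-shortcut procedure of sections~\ref{ss.tiled} and~\ref{ss.shortcuts}. The first selection keeps, for each enlarged tile $\widehat T$ of $B$, at most one pair $(x'_i,y'_i)$ with $f(x'_i)$ and $y'_i$ both in $\widehat T$, using that consecutive kept points are related by a true iterate of $f$. For each surviving pair one applies the elementary perturbation lemma inside $\widehat T$ and spreads it through the $N-1$ iterates of $\widehat T$ via Pugh's mechanism. Conflicts between the supports of two such spread perturbations $g'_i, g'_j$ are resolved by the shortcut construction: one replaces them by a single perturbation sending $x'_i$ directly to $f^N(y'_j)$ and erases the intermediate indices. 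The only inputs used from the geometry of $B$ are properties (T1), (T2) and (T3) of a tiled open set, so the argument goes through without $B$ being a cube.

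The main obstacle is, as in the original proof, to keep the number of conflicts generated by one initial pair uniformly bounded so that the final perturbation still fits inside the allotted enlargement of its tile. This is where properties (T2) and (T3) are essential: (T2) ensures that two perturbations only conflict across tiles that are actually adjacent, (T3) bounds by $12$ the number of neighbors of any tile, and remark~\ref{r.small-support} lets us prescribe the supports of the elementary perturbations a priori to be so small with respect to the tiles that after at most $12$ successive shortcuts the support is still much smaller than the ambient tile. Once this combinatorial control is in place, composing the finitely many pairwise-disjoint spread perturbations yields a single $g\in\cU$ with support in $B\cup f(B)\cup\dots\cup f^{N-1}(B)$ and an integer $m\in\{1,\dots,n\}$ with $g^m(z_0)=z_n$, which is exactly property (P).
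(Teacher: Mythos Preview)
Your proposal is correct and follows essentially the same route as the paper: you first observe that the construction of Pugh's chart $\psi_z$ uses only the derivatives $\D_zf^n$ and therefore makes sense at every point (periodic or not), then invoke compactness of $M$ to obtain a uniform integer $N$, and finally reproduce the first-selection / shortcut argument of sections~\ref{ss.tiled}--\ref{ss.shortcuts} using only the bounded-geometry properties (T1)--(T3) of the tiling to bound the number of conflicts by $12$. This is exactly the paper's argument, presented slightly more explicitly (your semicontinuity remark on $N(z)$ is a harmless refinement of the compactness step).
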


\subsubsectionruninhead{Additional remark}
One can state theorem~\ref{t.connecting3} without using tilings. This uses the
idea of the proof of the closing lemma: once Pugh's metric $\dd'$ has been given in a chart $U_z$, one considers the ``hyperbolic distance"
of $B$ defined for pair of points $z,z'\in B$ as $$\dd^B(z,z')=\frac{\dd'(z,z')}{\inf(\dd'(z,\partial B),\dd'(z',\partial B))}.$$
One extends it to the pair of points in $M$: if $z$ or $z'$ does not belong to $B$, then $\dd^B(z,z')=\infty$ unless $z=z'$ (then $\dd^B(z,z')=0$).
If $B$ is tiled as it is described in the proof of proposition~\ref{p.how-tile} and if $\dd^B(z,z')$ is smaller than $\frac{1}{10}$,
then $z$ and $z'$ are contained in a same enlarged tile of $B$ or are equal. This implies the following statement of the connecting lemma:

\begin{theo}[Connecting lemma, $4^\text{th}$ version]\label{t.conecting4}
For any pair $(f,\cU)$, there exists an integer $N\geq 1$ and, at any point $z\in M$, there exists an open neighborhood $U_z$ endowed with
a metric $\dd'_z$ such that any open set $B\subset U_z$ that is disjoint from its $N-1$ first iterates satisfies the following property:

if two points $p,q\in M$ may be jointed by a pseudo-orbit of $f$ whose jumps are smaller than $\frac{1}{10}$ for the hyperbolic distance $\dd^B$ of $B$ associated to $\dd'$,
then there exists a perturbation $g\in \cU$ with support in $B\cup f(B)\cup\dots\cup f^{N-1}(B)$ and an integer $m\geq 1$ such that $g^m(p)=q$.
\end{theo}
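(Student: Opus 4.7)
The plan is to deduce the statement directly from the 3rd version of the connecting lemma (Theorem~\ref{t.connecting3}), using the author's additional remark as a blueprint: the 4th version is essentially a reformulation of the 3rd, in which the explicit mention of a tiling on $B$ is absorbed into a metric condition on pseudo-orbit jumps. Accordingly, I take the integer $N$ and the charts $\psi_z\colon U_z\to \RR^2$ given by Theorem~\ref{t.connecting3}, and define $\dd'_z$ as the pullback via $\psi_z$ of the standard Euclidean metric on $\RR^2$. Given any open set $B\subset U_z$ disjoint from its $N-1$ first iterates, I apply Proposition~\ref{p.how-tile} to $\psi_z(B)\subset \RR^2$ to produce a tiling satisfying (T1)--(T3), which I pull back to a tiling of $B$; by Theorem~\ref{t.connecting3}, the tiled open set $B$ automatically satisfies property~(P).

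The substantive content of the proof is then to verify the following geometric claim, which is the bridge between the hyperbolic-distance formulation and the tile formulation: \emph{if $z,z'\in B$ are distinct and $\dd^B(z,z')<\tfrac{1}{10}$, then $z$ and $z'$ lie in a common enlarged tile of $B$.} To establish this, I would use the key estimate that for any tile $T$ of size $s$ in the construction of Proposition~\ref{p.how-tile}, and any point $w\in T$, the distance $\dd'(w,\partial B)$ is comparable to $s$: the lower bound $\dd'(w,\partial B)\geq s$ comes from the fact that the $s$-neighborhood of $T$ is contained in $B$ (condition (a) in the inductive construction), while the upper bound $\dd'(w,\partial B)\leq Cs$ for a universal $C$ comes from the fact that the parent tile at the previous level $\cT_{n-1}$ failed to be included precisely because of a point outside $B$ at bounded distance. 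Hence $\dd^B(z,z')<\tfrac{1}{10}$ forces $\dd'(z,z')$ to be a small fraction of $\min(s_0,s_1)$, where $s_i$ is the size of the tile containing $z_i$; properties (T2)--(T3), which bound how neighboring tiles can differ in scale and how many tiles can be adjacent to a given one, then allow a short case analysis to conclude that $z,z'$ sit either in the same tile or in adjacent tiles close enough to the shared boundary to belong to a common enlarged tile (the factor $1+\tfrac{1}{10}$ of the enlargement being tuned precisely to match the constant $\tfrac{1}{10}$ appearing in $\dd^B$).

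Once this geometric lemma is in hand, the end of the argument is immediate: any pseudo-orbit $(p=z_0,z_1,\dots,z_n=q)$ whose jumps satisfy $\dd^B(f(z_k),z_{k+1})<\tfrac{1}{10}$ at every $k$ with $f(z_k)\neq z_{k+1}$ respects the tiling of $B$ in the sense of section~\ref{s.perturbation-domains}; applying property~(P) to this pseudo-orbit produces the desired perturbation $g\in\cU$ with support in $B\cup f(B)\cup\dots\cup f^{N-1}(B)$ and an integer $m\geq 1$ with $g^m(p)=q$. The main obstacle in this plan is the geometric lemma of the second paragraph: one must track carefully how the Euclidean distance, the distance-to-boundary, and the scales of neighboring tiles interact, and the interplay of the three constants (the ratio $1+\tfrac{1}{10}$ defining enlarged tiles, the ratio $1+\tfrac{1}{5}$ appearing in (T2), and the threshold $\tfrac{1}{10}$ for $\dd^B$) must be handled with care---but the bounded geometry of the tiling guaranteed by (T1)--(T3) makes this a finite, explicit verification rather than a delicate estimate.
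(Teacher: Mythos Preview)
Your proposal is correct and follows exactly the route the paper sketches in its ``Additional remark'' preceding the statement: take $N$ and the charts $(\psi_z,U_z)$ from Theorem~\ref{t.connecting3}, pull back the Euclidean metric to get $\dd'_z$, tile $B$ via Proposition~\ref{p.how-tile}, and reduce to property~(P) by the geometric claim that $\dd^B(z,z')<\tfrac{1}{10}$ forces $z,z'$ into a common enlarged tile. Your outline of the geometric claim (comparing $\dd'(w,\partial B)$ to the tile scale via the two-sided bound coming from the inductive tiling construction) is in fact more detailed than what the paper provides, which merely asserts the implication.
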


\subsection{Choice of the perturbation domains}\label{s.choice}
In order to prove theorem~\ref{t.pseudo-connecting2}, we consider a neighborhood $\cU$ of $f$ (which has the composition property, section~\ref{ss.composition})
and we apply the connecting lemma (theorem~\ref{t.connecting3}) so that we get
\begin{itemize}
\item an integer $N$,
\item a finite covering of $M$ by charts $\psi_z\colon U_z\to \RR^2$.
\end{itemize}
We consider two points $p,q\in M$ that one wants to connect by a same orbit.
One difficulty is that the perturbation domains given by theorem~\ref{t.connecting3} should be disjoint from their $N-1$
first iterates so that they cannot contain any periodic point of small period. This motivates the following argument.

We denote by $\Sigma_{2N}$ the set of points that are periodic of period less or equal to $2N$.
It is finite by assumption. It is always possible to assume that $p$ and $q$ are not periodic
(and not in $\Sigma_{2N}$): if this is
not the case, one chooses $p'$ and $q'$ not periodic and close to $p$ and $q$ (using the fact that the set of
periodic points is only countable), one then realizes the perturbation as explained in the section below and connects $p'$ to $q'$; one ends by a little
conjugacy of the perturbed map in order to move $p'$ on $p$ and $q'$ on $q$.

\subsubsectionruninhead{Connection by segments of orbits and paths.}
In a first step, we build a ``geometrical pseudo-orbit" that joints the points $p$ to $q$: the jumps
are obtained as translations along small paths which are pairwise disjoint.
\begin{prop} There are a sequence of points $(x_0,\dots,x_s)$ in $M$, a sequence of integers
$(n_0,\dots,n_{s-1})$, and a sequence of paths $(\gamma_0,\dots,\gamma_{s-1})$ in $M$ such that:
\begin{itemize}
\item we have $x_0=p$ and $x_s=q$;
\item for each $k\in \{0,\dots,s-1\}$, the points $f^{n_k}(x_k)$ and $x_{k+1}$ are the endpoints of the path $\gamma_k$;
\item each path $\gamma_k$ is small and contained in some domain $U_z$;
\item all the paths $f^i(\gamma_k)$ with $i\in \{0,\dots,N-1\}$ and $k\in \{0,\dots,s-1\}$ are disjoint.
\end{itemize}
\end{prop}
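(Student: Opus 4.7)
The plan is to adapt the chain-recurrence argument of proposition~\ref{p.chainrecurrence}, replacing each $\varepsilon$-jump by a short genuine path while tracking a disjointness condition on iterates. Define
\[
X=\{\,y\in M\ :\ \text{there exist valid data } (x_0=p,\dots,x_s=y,n_0,\dots,n_{s-1},\gamma_0,\dots,\gamma_{s-1}) \text{ meeting the four conclusions}\,\}.
\]
Clearly $p\in X$ (take $s=0$). Using the preliminary reduction that $p,q\notin\Sigma_{2N}$, the goal is to show $X=M$, which yields $q\in X$ and hence the proposition.

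The inductive extension will go as follows. I would pick $y\in X$ that is recurrent, Birkhoff-typical for $v$, and outside $\Sigma_{2N}$; such points are dense in $X$ by Poincar\'e recurrence, Birkhoff's ergodic theorem, and the finiteness of $\Sigma_{2N}$. Set
\[
A=\bigcup_{k<s,\ |i|<N}f^{i}(\gamma_k),
\]
a finite union of short arcs, hence of $v$-measure zero. Choose a neighborhood $V$ of $y$, contained in some chart $U_z$ of theorem~\ref{t.connecting3}, small enough that $V,f(V),\dots,f^{N-1}(V)$ are pairwise disjoint and that $A\cap V$ consists of a few sub-arcs none of which disconnects $V$. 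Because $y$ is recurrent and Birkhoff-typical, the density of the return times $\ell\ge 1$ with $f^\ell(y)\in A$ is $v(A)=0$ while returns to $V$ occur infinitely often, so some $\ell$ satisfies $f^\ell(y)\in V\setminus A$. Pick also $y'\in V\setminus A$ that is recurrent, Birkhoff-typical, and outside $\Sigma_{2N}$: these are full-$v$-measure conditions in $V$.

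Set $x_{s+1}=y'$, $n_s=\ell$, and take $\gamma_s$ to be any short path in $V\setminus A$ joining $f^\ell(y)$ to $y'$. Then $\gamma_s\subset V\subset U_z$ lies in a chart, and since $V,f(V),\dots,f^{N-1}(V)$ are pairwise disjoint, the iterates $f^i(\gamma_s)$ for $0\le i<N$ are mutually disjoint. Moreover, for any $k<s$ and $0\le i,j<N$ one has $|j-i|<N$, so $f^{j-i}(\gamma_k)\subset A$, whence
\[
f^i(\gamma_s)\cap f^j(\gamma_k)=\emptyset \ \Longleftrightarrow\ \gamma_s\cap f^{j-i}(\gamma_k)=\emptyset,
\]
which is immediate from $\gamma_s\subset V\setminus A$. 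Thus the enlarged data is valid, $y'\in X$, and $X$ contains a full-measure subset of a neighborhood of each of its Birkhoff-typical recurrent non-periodic points. Combined with the connectedness of $M$, this yields $X=M$ exactly as in proposition~\ref{p.chainrecurrence}.

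The hard part will be the joint selection of $\ell$ and $y'$ so that both $f^\ell(y)$ and $y'$ avoid the sensitively placed measure-zero set $A$; this is handled by propagating Birkhoff-typicality of the successive $x_k$ along the induction, which is consistent since the $v$-typical recurrent points form a full-measure subset at each step (with $A$ only growing by finitely many short arcs). A minor preliminary adjustment, already flagged above the statement, is to replace $p$ and $q$ by nearby non-periodic points (possible because $\Sigma_{2N}$ is finite and periodic points are countable), run the construction between those proxies, and correct at the very end by a small conjugacy so that the final datum starts at $p$ and ends at $q$.
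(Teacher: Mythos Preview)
Your route is genuinely different from the paper's: the paper first produces a finite family of short paths $(\sigma_k)$ by the chain-recurrence argument (allowing intersections among their iterates) and then \emph{removes the intersections afterwards} by an inductive shortcut procedure, modifying two conflicting paths at a time and discarding the ones in between. You instead try to grow the disjoint family one arc at a time. That alternative is reasonable in spirit, but the execution has real gaps.

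The step ``the density of the return times $\ell$ with $f^\ell(y)\in A$ is $v(A)=0$ while returns to $V$ occur infinitely often, so some $\ell$ satisfies $f^\ell(y)\in V\setminus A$'' does not stand as written. Recurrence gives infinitely many returns to $V$, not positive density, and an infinite density-zero set can sit entirely inside another density-zero set; you would need $\nu_y(V)>0$ for the ergodic component $\nu_y$ through $y$, which is true for $v$-a.e.\ $y$ but is not what you invoked (and ``Birkhoff-typical for $v$'' is vacuous unless $v$ is ergodic). More seriously, the set $A$ you must avoid at step $s$ contains the iterates of $\gamma_{s-1}$, and $\gamma_{s-1}$ is chosen \emph{simultaneously with} $x_s=y$; you cannot select $y$ to be ``typical for $\mathbf 1_A$'' before $A$ exists, and the phrase ``propagating Birkhoff-typicality'' does not dissolve this circularity. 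A repair is possible --- choose $x_s$ first with forward orbit disjoint from $A_{s-1}$ and from the orbit of $x_{s-1}$ (a full-measure condition), then pick the arc $\gamma_{s-1}$ avoiding the countable forward orbit of $x_s$, so that every return of $x_s$ to $V_s$ automatically lands in $V_s\setminus A_s$ --- but this needs to be said. Finally, you have not handled the endpoints: $p$ must be taken recurrent (not just non-periodic) for the first step to start, and your induction only shows $X$ contains a full-measure set near each of its good points, so an extra argument is needed to land \emph{on} $q$ rather than merely near it.
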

In particular the points $p$ and $q$ are connected by a pseudo-orbit of the form
$$(p=x_0, f(x_0),\dots,f^{n_0-1}(x_0), x_1,f(x_1),\dots, f^{n_1-1}(x_1),\dots,
x_{s-1}, f^{n_{s-1}-1}(x_{s-1}),x_s=q).$$
\begin{proof}
The proof is very similar to the proof of proposition~\ref{p.chainrecurrence}:
since $p$ and $q$ are not periodic, one chooses a large compact set $K$ which contains in its interior $p$ and $q$ and
which is disjoint from the compact set $\Sigma_{2N}$. One may also assume that the interior of $K$ is connected.

There is a pseudo-orbit from $p$ to $q$ whose jumps are contained in the interior of $K$ and
arbitrarily small (in particular each jump is contained in a domain $U_z$). Hence, for any two consecutive
points $x$ and $y$ of the pseudo-orbit, there is a simple path $\sigma$ that connects $f(x)$ to $y$.
One can choose $\sigma$ small and contained in the interior of $K$ so that $\sigma$ is disjoint from its $2N$
first iterates and contained in some open set $U_z$. We get a family of path $(\sigma_k)$.
This gives all the required properties but the last one (a path $\sigma$ may have some iterate
$f^\ell(\sigma)$ with $\ell\in \{0,\dots,N\}$ that intersects another path $\sigma'$).

We now suppress the intersections between the paths. This is done inductively: one considers the smallest $i$
such that one iterate of $\sigma_i$ intersects one iterate of some other path $\sigma_j$.
One considers also the largest possible $j$. One then modifies the pseudo-orbit and the sequence $(\sigma_k)$:
we suppress the intersections between $\sigma_i$ and $\sigma_j$, we do not create any new intersection and
we do not use anymore the paths $\sigma_k$ with $k\in \{i+1,\dots,j-1\}$.
Hence, after a finite number of modifications, we have removed all the intersections and we get the announced sequences.

Let us explain how to handle with the intersections
(we will assume for instance that $f^\ell(\sigma_i)$ intersect $\sigma_j$):
the path $\sigma_i$ connects the point $f^{n_i}(x_i)$ to $x_{i+1}$ and
the path $\sigma_j$ connects the two points $f^{n_i}(x_j)$ and $x_{j+1}$.
One considers the path $\sigma'_i\subset \sigma_i$ which connects $f^{n_i}(x_i)$ to the first intersection
point $x'_{i+1}$ of $\sigma_i$  with $f^{-\ell}(\sigma_j)$.
We introduce also the path $\tilde \sigma_j\subset \sigma_j$
which connects $f^\ell(x'_{i+1})$ to $x_{j+1}$. Note that $f^\ell(\sigma'_i)$ and $\tilde \sigma_j$
only intersect at their endpoint $f^\ell(x'_{i+1})$.\\
One now considers a return $f^{n'_{i+1}}(x'_{i+1})$
of $f^\ell(x'_{i+1})$ close to $f^\ell(x'_{i+1})$ (by Poincar\'e recurrence theorem, changing a little bit
the point $x'_{i+1}$ if necessary). One may also assume that $f^{n'_{i+1}}(x'_{i+1})$ does not belong to
$f^\ell(\sigma'_{i})$ (perturbing a little bit $\sigma'_{i}$ again). It is then possible to
modify $\tilde \sigma_j$ in a neighborhood of $f^\ell(x'_{i+1})$ so that it connects $f^{n'_{i+1}}(x'_{i+1})$ to
$x_{j+1}$ and is disjoint from $f^\ell(\sigma'_{i})$: we get a path $\sigma'_j$
(see figure~\ref{f.path}).
\begin{figure}[ht]
\begin{center}
\input{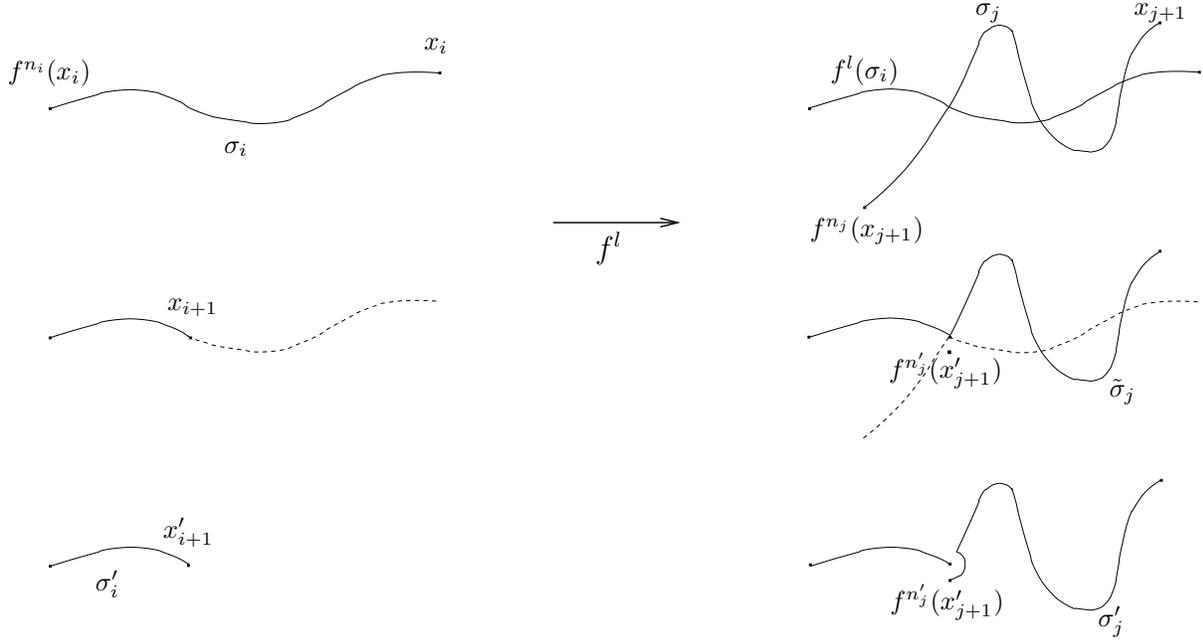}
\end{center}
\caption{Modification of the paths $\sigma_i$ and $\sigma_j$. \label{f.path}}
\end{figure}

By this construction, there is no more intersection between the iterates of $\sigma'_i$ and $\sigma'_j$.
There is also a segment of orbit that connects one endpoint of $\sigma'_i$ to an endpoint of $\sigma'_j$.
Hence, one can forget the intermediary paths $\sigma_k$ with $k\in \{i+1,\dots,j-1\}$.
\end{proof}

\subsubsectionruninhead{Construction of perturbation domains.}\label{ss.construction-domains}
In the second step, we here build:
\begin{enumerate}
\item A finite number of perturbation domains $(B_s)_{s\in S}$.
One requires that all the sets $f^i(B_s)$, with $s\in S$, and $i\in\{0,\dots, N-1\}$ are pairwise disjoint.
\item A pseudo-orbit $(z_0,\dots,z_n)$ between $p$ and $q$ which respects the tiling of the perturbation domains $B_k$.
\end{enumerate}

For this, one first considers the sequences $(x_k)$, $(n_k)$ and $(\gamma_k)$ introduced at the previous section
and that connect $p$ to $q$. One thicks each path $\gamma_k$ as a small open set $B_k$.
As it is the case for the paths $\gamma_k$, one can assume that each $B_k$ is contained in some open set $U_z$
and all the sets $f^i(B_k)$ with $i\in \{0,\dots, N-1\}$ and $s\in S$ are pairwise disjoint.\\
By proposition~\ref{p.how-tile}, one can tile each $B_k$ as a tiled domain of the chart $(\psi_z,U_z)$
and by theorem~\ref{t.connecting3}, it becomes a perturbation domain.\\

It remains to define the pseudo-orbit $(z_0,\dots,z_n)$. Note that the pseudo-orbit
$$(p=x_0, f(x_0),\dots,f^{n_0-1}(x_0), x_1,\dots,x_s=q)$$
has its jumps contained in the domains $B_k$ but maybe not in the tiles of the $B_k$. Therefore, one
will define a longer pseudo-orbit by introducing between each pair of points $(f^{n_i}(x_i), x_{i+1})$
a segment of pseudo-orbit which has its jumps in the enlarged tiles of the domains $(B_k)$.\\
The argument is again the same as in the proof of proposition~\ref{p.chainrecurrence}:
since $\gamma_k$ is connected, one can build a pseudo-orbit from $f^{n_i}(x_i)$ to $x_{i+1}$
(using the recurrence of almost every point of $M$ by $f$) and having jumps
arbitrarily small and all contained in any neighborhood of $\gamma_k$.
In particular, if the size of the jumps and the distance from each jump to $\gamma_k$ are
small with respect to the tiles of $B_k$ that meet $\gamma_k$, then, the pseudo-orbit respect the tiling of $B_k$.
This gives the pseudo-orbit $(z_k)$.

\subsection{Conclusion: proof of theorem~\ref{t.pseudo-connecting2}}\label{s.conclusion}
We have built some domains $(B_k)$ and a pseudo-orbit $(z_i)$ from $p$ to $q$
at section~\ref{ss.construction-domains}.\\
Using the definition of the perturbation domains (theorem~\ref{t.connecting3}), one may perturb in each $B_k$ in order to remove the
jumps inside (property (P)).
Since the perturbation domains are disjoint, one can perturb independently in each of them, the
final perturbation will stay in $\cU$ by the composition property.\\
Hence, one considers each perturbation domains one after the other and eliminates in this way
all the jumps of the pseudo-orbit $(z_i)$ (figure~\ref{f.suppress}). We get at the end for the perturbed map
a genuine orbit that connects $p$ to $q$ as required.
\begin{figure}[ht]
\begin{center}
\input{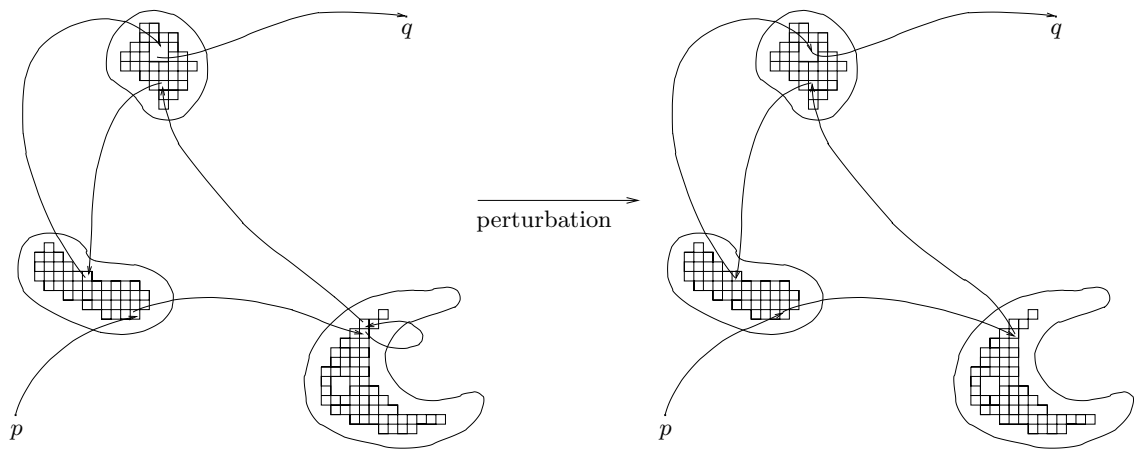}
\end{center}
\caption{Final perturbation. \label{f.suppress}}
\end{figure}

\vskip 1cm
\flushleft{\bf Sylvain Crovisier} \ \  (crovisie@math.univ-paris13.fr)\\

\medskip
\medskip
CNRS - Laboratoire Analyse, G\'eom\'etrie et Applications, UMR 7539,\\
Institut Galil\'ee, Universit\'e Paris 13,
Avenue J.-B. Cl\'ement, 93430 Villetaneuse, France\\

\end{document}